\documentclass[a4paper,10pt]{article}

\usepackage{multido}
\usepackage{amssymb,amsmath,amsthm}

\usepackage{mathrsfs}

\theoremstyle{plain}
\newtheorem{Def}{Definition}[section]
\newtheorem{Theor}[Def]{Theorem}
\newtheorem{Prop}[Def]{Proposition}
\newtheorem{Lem}[Def]{Lemma}

\theoremstyle{remark}
\newtheorem{Rem}[Def]{Remark}


\newcommand{\cR}{{\mathbb R}}
\newcommand{\non}{\nonumber}
\newcommand{\eq}[1]{\mbox{\rm {(\ref{#1})}}}
\newcommand{\ve}{\varepsilon}
\newcommand{\na}{\nabla}


\DeclareMathOperator{\di}{div}
\DeclareMathOperator{\Curl}{Curl}

\DeclareMathOperator{\inter}{int}
\DeclareMathOperator{\rt}{rt}

\newcommand{\yieldlimit}{\sigma_{\mathrm{y}}}

\newcommand{\R}{\mathbb{R}}

\DeclareMathOperator{\sym}{sym}
\DeclareMathOperator{\Tr}{tr}

\DeclareMathOperator{\dev}{dev}

\begin{document}
\title{Homogenization for dislocation based gradient visco-plasticity}
\author{Sergiy Nesenenko%
\thanks{Sergiy Nesenenko, Fakult\"at f\"ur  Mathematik, Universit\"at Duisburg-Essen, Campus Essen, Thea-Leymann Strasse 9, 45117 Essen, Germany, email: sergiy.nesenenko@uni-due.de, Tel.: +49-201-183-2827}
}
\maketitle
\begin{center}
\vspace{-2ex}{\it\large Communicated with Patrizio Neff}
\end{center}

\begin{abstract}
In this work we study the homogenization for infinitesimal dislocation based
gradient viscoplasticity with linear kinematic hardening and general non-associative monotone plastic flows. The constitutive equations in the models we 
study are assumed to be only of monotone type. Based on the generalized version of Korn's inequality for incompatible tensor fields (the non-symmetric plastic distortion)
due to Neff/Pauly/Witsch, we derive uniform estimates for the  solutions of quasistatic initial-boundary
 value problems 
under consideration and then using a modified unfolding operator technique 
and a monotone operator method we obtain the homogenized system of equations.  
A new unfolding result for the $\Curl\Curl$-operator is presented in this work as well. 
The proof of the last result is
based  on the Helmholtz-Weyl decomposition for vector fields in general
$L^q$-spaces. 
\end{abstract}

\noindent{\bf{Key words:}} plasticity, gradient plasticity, viscoplasticity, 
dislocations, plastic spin, homogenization, periodic unfolding, Korn's inequality, 
Rothe's time-discretization method, rate-dependent models.\\
\\[2ex]
\textbf{AMS 2000 subject classification:} 35B65, 35D10, 74C10, 74D10,
35J25, 34G20, 34G25, 47H04, 47H05
\section{Introduction}

We study the homogenization of
quasistatic initial-boundary value problems arising in gradient viscoplasticity. The models
we study use rate-dependent constitutive equations with internal variables to describe the
deformation behaviour of metals at infinitesimally small strain. 

Our focus is on a phenomenological model on the macroscale not including the case of single crystal plasticity. 
Our model has been first presented in \cite{Neff_Chelminski07_disloc}. It is inspired by the early work of Menzel and Steinmann \cite{Steinmann00}. Contrary to more classical strain gradient approaches, the model features from the outset a non-symmetric plastic distortion field $p\in{\cal M}^3$ \cite{Bardella10},
a dislocation based energy storage based solely on $|\Curl p|$ (and not $\nabla p$) and therefore second gradients of the plastic 
distortion in the form of $\Curl\Curl p$ acting as dislocation based kinematical backstresses. We only consider energetic length scale effects and not higher gradients in the dissipation.

Uniqueness of classical solutions in the subdifferential case (associated plasticity) for rate-independent and rate-dependent formulations is shown in  \cite{Neff_Iutam08}. The existence question for the rate-independent model in terms of a weak reformulation is addressed in \cite{Neff_Chelminski07_disloc}. The rate-independent model with isotropic hardening is treated in \cite{Ebobisse_Neff09,Neff_Chelminski07_disloc}. The well-posedness of a rate-dependent variant without isotropic hardening is presented in \cite{NesenenkoNeff2011,NesenenkoNeff2012}.
First numerical results for a simplified rate-independent irrotational formulation (no plastic spin, symmetric plastic distortion $p$) are presented in \cite{Neff_Sydow_Wieners08}. In \cite{Lussardi08,Reddy06} well-posedness for a rate-independent model of Gurtin and Anand \cite{Gurtin05b} is shown under the decisive assumption that the plastic distortion is symmetric (the irrotational case), in which case one may really speak of a strain gradient plasticity model, since the full gradient acts on the symmetric plastic strain. 

Let us shortly revisit the modeling ingredients of the gradient plasticity model under consideration. 
This part does not contain new results but is added for clarity of exposition.
As usual in infinitesimal plasticity theory, the basic variables are the displacement $u:\Omega\to\R^3$ and the plastic distortion $p:\Omega\to \R^{3\times 3}$. We split the total displacement gradient $\nabla u$ into non-symmetric elastic and non-symmetric plastic distortions
\begin{align*}
  \nabla u=e+p\, .
\end{align*}
For invariance reasons, the elastic energy contribution may only depend on the symmetric elastic strains $\sym e=\sym (\nabla u-p)$. 
For more on the basic invariance questions related to this issue
dictating this type of behaviour, see \cite{Neff_Svendsen08,Neff_techmech07}. We assume as well plastic incompressibility $\Tr{p}=0$, as is usual.
The thermodynamic potential of our model is therefore written as
\begin{align} \label{energy}
\int\nolimits_\Omega &\Big(\underbrace{\mathbb{C}[x] (\sym (\nabla u-p)) (\sym (\nabla u-p))}_{\text{elastic energy}} \\
&+\underbrace{\frac{C_1[x] }{2} \, |\dev\sym p|^2}_{\text{kinematical hardening}}+\underbrace{\frac{C_2 }{2}|\Curl p|^2}_{\text{dislocation storage}} + \underbrace{u\cdot b}_{\text{external volume forces}}\Big)dx\non
\end{align}
The positive definite elasticity tensor $\mathbb{C}$ is able to represent the elastic anisotropy of the material. The plastic flow has the form
\begin{align}
  \partial_t p\in g(\sigma-C_1[x]  \dev\sym p-C_2\Curl\Curl p)\, ,
\end{align}
where $\sigma=\mathbb{C}[x]\sym (\nabla u-p)$ is the elastic symmetric Cauchy stress of the material and $g$ is a multivalued monotone flow function which is not necessary the subdifferential of a convex plastic potential (associative plasticity). This ensures the validity of the second law of 
thermodynamics, see \cite{Neff_Chelminski07_disloc}.

In this generality, our formulation comprises certain non-associative plastic flows in which the yield condition and the flow direction are independent and governed by distinct functions. Moreover, the flow function $g$ is supposed to induce a rate-dependent response as all materials are, in reality, rate-dependent.

Clearly, in the absence of energetic length scale effects (i.e. $C_2=0$), the $\Curl\Curl p$-term is absent. In general we assume that $g$ maps symmetric tensors to symmetric tensors. Thus, for $C_2=0$ the plastic distortion remains always symmetric and the model reduces to a classical plasticity model. Therefore, the energetic length scale is solely responsible for the plastic spin (the non-symmetry of $p$) in the model.

Regarding the boundary conditions necessary for the formulation of the higher order theory we assume that the so-called micro-hard boundary condition (see \cite{Gurtin05}) is specified, namely
\begin{align*}
       p\times n|_{\partial\Omega}=0.
\end{align*}
This is the correct boundary condition for tensor fields in $L^2_{\Curl}-$spaces
 which admits tangential traces.
We combine this with a new inequality extending Korn's inequality to incompatible tensor fields, namely
\begin{align}
\label{incompatible_korn}
\exists\, C=C(\Omega)>0\;& \forall \, p\in L^2_{\Curl}(\Omega, {\cal M}^3): \quad p\times n|_{\partial\Omega}=0:   \\
  & \underbrace{\|p\|_{L^2(\Omega)}}_{\text{plastic distortion}}\le C(\Omega)\, 
     \Big( \underbrace{\|\sym p\|_{L^2(\Omega)}}_{\text{plastic strain}}+ \underbrace{ \|\Curl p\|_{L^2(\Omega)}}_{\text{dislocation density}} \Big)\, .\notag
\end{align}
Here, the domain $\Omega$ needs to be {\bf sliceable}, i.e. cuttable into finitely many simply connected subdomains with Lipschitz boundaries. This inequality has been derived in \cite{Neff_Pauly_Witsch_cracad11, Neff_Pauly_Witsch_Korn_diff_forms_m2as12,Neff_Pauly_Witsch_Sbornik12
} and is precisely motivated by the well-posedness question for our model \cite{Neff_Chelminski07_disloc}. The inequality \eqref{incompatible_korn} expresses the fact that controlling the plastic strain $\sym p$ and the dislocation density $\Curl p$ in $L^2(\Omega)$ gives a control of the plastic distortion $p$ in $L^2(\Omega)$ provided the correct boundary conditions are specified: namely the micro-hard boundary condition. Since we assume that $\mathrm{tr}(p)=0$ (plastic incompressibility) the quadratic terms in the thermodynamic potential provide a control of the right hand side in \eqref{incompatible_korn}.

It is worthy to note that with $g$ only monotone and not necessarily a subdifferential the powerful energetic solution concept \cite{Mielke09,Lussardi08,Kratochvil10} cannot be applied. In our model we face the combined challenge of a gradient plasticity model based on the dislocation density tensor $\Curl p$ involving the plastic spin, a general non-associative monotone flow-rule and a rate-dependent response. 


\paragraph{Setting of the homogenization problem.}
Let $\Omega \subset \cR^3$ be an open bounded set, the set of material
points of the solid body, with a $C^2$-boundary and $Y\subset{\mathbb R}^3$ be a set 
having the paving property with respect to a basis $(b_1,b_2,b_3)$ defining the periods, 
a reference cell. By $T_e$ we denote a positive number (time of existence),
which can be chosen arbitrarily large,  and for $0 < t\leq T_e$
\begin{eqnarray}
{\Omega}_{t} = {\Omega} \times {(0, t)}. \non
\end{eqnarray} 
The sets, ${\cal M}^3$ and ${\cal S}^3$ denote the sets of all
$3 \times 3$--matrices and of all symmetric $3 \times 3$--matrices,
 respectively. Let $\mathfrak{sl}(3)$ be the set of all traceless $3 \times 3$--matrices,
 i.e. $$\mathfrak{sl}(3)=\{v\in{\cal M}^3\mid \Tr v=0\}.$$ Unknown in our small strain formulation are the displacement $u_\eta(x,t) \in
\cR^3$ of the material point $x$ at time $t$ and the non-symmetric infinitesimal
plastic distortion $p_\eta(x,t) \in\mathfrak{sl}(3)$. 

The model equations of the problem are
\begin{eqnarray}
- \di_x \sigma_\eta(x,t) &=&  b(x,t), \label{CurlPr1}
\\[1ex]
\sigma_\eta(x,t) &=& {\mathbb C}[x/\eta] ( \sym (\na_x u_\eta(x,t) - p_\eta(x,t) ) ),   \label{CurlPr2}
\\[1ex] 
\label{CurlPr3} \partial_t p_\eta(x,t) & \in & g \big(x/\eta,\Sigma^{\rm lin}_\eta(x,t)\big),  
\hspace{3ex} \Sigma^{\rm lin}_\eta=\Sigma^{\rm lin}_{e,\eta}+\Sigma^{\rm lin}_{\rm sh,\eta}
+\Sigma^{\rm lin}_{\rm curl,\eta},\label{microPr3} \\
\Sigma^{\rm lin}_{\rm e,\eta}&=&\sigma_\eta,
\hspace{1ex} \Sigma^{\rm lin}_{\rm sh,\eta}=-C_1[x/\eta]  \dev \sym p_\eta,
\hspace{1ex}  \Sigma^{\rm lin}_{\rm curl,\eta}=- C_2 \Curl\Curl p_\eta\, ,\non
\end{eqnarray}
which must be satisfied in $\Omega \times [0,T_e)$.  Here, $C_2\ge 0$ is a given material constant independent of $\eta$ and  $\Sigma^{\rm lin}_\eta$ is the infinitesimal Eshelby stress tensor driving the evolution of the plastic distortion $p_\eta$ and $\eta$ is a scaling parameter of the microstructure. 
The homogeneous initial condition and Dirichlet boundary condition are  
\begin{eqnarray} 
p_\eta(x,0) &=& 0, \hspace{7ex}  x \in \Omega,  \label{CurlPr4}   \\
p_\eta(x,t)\times n(x)&=&0,\hspace{7ex} (x,t) \in \partial\Omega \times
  [0,T_e), \label{CurlPr5}\\
u_\eta(x,t) &=& 0, \quad\quad\quad (x,t) \in \partial \Omega \times
  [0,T_e)\,,\label{CurlPr6}
\end{eqnarray}
where $n$ is a normal vector on the boundary $\partial\Omega$\footnote{Here, $v\times n$ with $v\in{\cal M}^3$ and $n\in\mathbb{R}^3$ denotes a row by column operation.}. For 
simplicity we consider only homogeneous boundary condition and we assume that 
the cell of periodicity is given by $Y=[0,1)^3$. Then, we assume that $C_1:Y\to\mathbb{R}$, a given material function, is measurable, periodic with the periodicity cell $Y$ and
satisfies the inequality
\begin{eqnarray}
C_1[y]\ge\alpha_1>0 \label{AssOnC1}
\end{eqnarray}
for all $y\in Y$ and some positive constant $\alpha_1$.
For every $y\in Y$ the
elasticity tensor ${\mathbb C}[y]: {\cal S}^3 \rightarrow {\cal S}^3$ is
linear symmetric and such that there
exist two positive constants $0 < \alpha \leq \beta$ satisfying
\begin{eqnarray}
    \alpha | \xi | ^{2} \leq {\mathbb C}_{ijkl}[ y ] \xi _{kl} \xi _{ij}
    \leq \beta | \xi | ^{2} \ \ \ \textrm{for} \ \textrm{any} \ \xi
    \in {\cal S}^3.\label{AssOnC}
\end{eqnarray}
We assume that the mapping 
$y\mapsto {\mathbb C}[y]:{\mathbb R}^3 \rightarrow {\cal S}^3$ is
measurable and periodic with the same periodicity cell $Y$. 
Due to the above assumption ($C_1>0$), the classical linear 
kinematic hardening is included in the model. Here, the nonlocal
 backstress contribution is given by the dislocation density motivated term 
$\Sigma^{\rm lin}_{\rm curl,\eta}=- C_2 \Curl\Curl p_\eta$ together with 
corresponding Neumann conditions.

For the model we require that the nonlinear constitutive mapping
$v\mapsto g(y,v):{\cal M}^3 \rightarrow 2^{\mathfrak{sl}(3)}$ is monotone for all  $y\in Y$, i.e.
it satisfies
\begin{eqnarray}
0 &\leq& (v_{1}-v_{2})\cdot (v^{*}_{1}-v^{*}_{2}), \label{monotype2}  
\end{eqnarray}
for all $v_i \in {\cal M}^3,\ v^{*}_i \in g(y,v_i),\ i=1,2$ and all  $y\in Y$.  We also require
that
\begin{eqnarray}
0 \in g(y,0), \hspace{3ex}\text{a.e.}\ y\in Y.  \label{monotype1} 
\end{eqnarray}
The mapping
$y\mapsto g(y,\cdot):{\mathbb R}^3 \rightarrow 2^{\mathfrak{sl}(3)}$ is periodic with the same periodicity cell $Y$. 
Given are the volume force $b(x,t)
\in \cR^3$ and the initial datum $
p^{(0)}(x) \in \mathfrak{sl}(3)$. 
\begin{Rem}
{\it It is well known that classical viscoplasticity (without gradient effects)
 gives rise to a well-posed problem. We extend this result to our formulation
  of rate-dependent gradient plasticity. The presence of the
  classical linear kinematic hardening in our model
  is related to $C_1>0$ whereas 
  the presence of the nonlocal gradient term is always related to $C_2>0$. }
\end{Rem}
The  development of the homogenization theory for the quasi-static initial boundary value
problem of monotone type in the classical elasto/visco-plasticity introduced by Alber in
\cite{Alb98} has started with the work  \cite{Alb00},
where the homogenized system of equations has been derived using the formal asymptotic
ansatz.  In the following work \cite{Alb03} Alber justified the formal asymptotic
ansatz for the case of positive definite
free energy\footnote{Positive definite energy corresponds to linear kinematic 
hardening behavior of materials.},  
employing the energy method of Murat-Tartar, yet only for local smooth solutions
of the homogenized problem. It is shown there that
the solutions of elasto/visco-plasticity problems can be approximated
in the $L^2(\Omega)-$norm by the smooth functions constructed from the solutions
of the homogenized problem. Later in \cite{Nes07}, under the assumption 
that the free energy is positive definite, it is proved that
the difference of the solutions of the microscopic problem 
and the solutions constructed from the homogenized problem, which both need not be smooth,
tends to zero in the $L^2(\Omega\times Y)-$norm, where $Y$ is the periodicity cell.
Based on the results obtained in \cite{Nes07}, in \cite{AlbNese09b} the convergence 
in $L^2(\Omega\times Y)$ is replaced by convergence in $L^2(\Omega)$. In the meantime,
for the rate-independent problems in plasticity similar results are
 obtained in \cite{Miel07} using the unfolding operator method 
 (see Section~\ref{periodicunfolding}) and methods of 
 energetic solutions due to Mielke. For special rate-dependent
models of monotone type, namely for rate-dependent generalized standard 
materials, the two-scale convergence of the solutions of the microscopic problem
to the solutions of the homogenized problem has been shown in \cite{Vis08b,Visintin08}. The homogenization of the Prandtl-Reuss model is performed in \cite{Schweizer10,Visintin08}.
In \cite{Nesenenko12a}
the author considered the rate-dependent problems of monotone type with constitutive functions
$g$, which need not  be subdifferentials, but which belong to the class of functions 
${\mathbb M}(\Omega, {\cal M}^3, q, \alpha, m)$ introduced
in Section~\ref{Homogenization}.
Using the unfolding operator method and in particular
the homogenization methods developed in \cite{Damlamian07},
for this class of functions the homogenized equations for the viscoplactic
 problems of monotone type are obtained in \cite{Nesenenko12a}.      

In the present work
the construction of the homogenization theory for the initial boundary value
problem (\ref{CurlPr1}) - (\ref{CurlPr6})  is based on the existence result derived
in \cite{NesenenkoNeff2012} (see Theorem~\ref{existMain}) and on the homogenization
techniques developed in \cite{Nesenenko12a} for classical viscoplasticity of monotone type.
 The existence result
in \cite{NesenenkoNeff2012} extends the well-posedness for infinitesimal dislocation based
gradient viscoplasticity with linear kinematic hardening from the subdifferential case
 (see \cite{NesenenkoNeff2011}) to general non-associative monotone plastic flows for 
 sliceable domains. In this work we also assume that the domain $\Omega$ is sliceable
 and that the monotone function $g:\mathbb{R}^3\times{\cal M}^3 \rightarrow 2^{\mathfrak{sl}(3)}$
 belongs to the class ${\mathbb M}(\Omega, {\cal M}^3, q, \alpha, m)$. 
 For sliceable domains $\Omega$, based on the inequality
(\ref{incompatible_korn}), we are able to derive then uniform estimates
for the solutions of (\ref{CurlPr1}) - (\ref{CurlPr6}) in Lemma~\ref{existLemma}.
Using the uniform estimates for the solutions of (\ref{CurlPr1}) - (\ref{CurlPr6}),
the unfolding operator method  and 
the homogenization techniques developed in \cite{Damlamian07,Nesenenko12a},
for the class of functions ${\mathbb M}(\Omega, {\cal M}^3, q, \alpha, m)$
 we obtain easily the homogenized equations for the original
 problem under consideration (see Theorem~\ref{HomoMain}). The distinguish feature of this work
 is that we use a variant of the unfolding operator due to Francu (see \cite{Francu_2010,Francu_Svanstedt_2012}) and not the one defined in \cite{Cioranescu08}. The
 modified unfolding operator helps to resolve the problems connecting with the need of 
 the careful treatment of the boundary layer in the definition of the unfolding operator in \cite{Cioranescu08}.
 To the best our knowledge this is the first 
 homogenization result obtained for the problem (\ref{CurlPr1}) - (\ref{CurlPr6}).
We note that similar homogenization results for the strain-gradient model of Fleck and Willis
 \cite{FleckWillis2004} are derived in \cite{Francfort2012,Giacomini2011,Hanke2011} 
 using the unfolding method 
 together with the $\Gamma$-convergence method in the rate-independent setting. 
 In \cite{Francfort2012} the authors, based on the assumption that the model 
 under consideration is of rate-independent type, are able to treat the case when 
 $C_2$ is a $Y$-periodic function as well. In the rate-independent setting this is
 possible due to the fact that the whole system (\ref{CurlPr1}) - (\ref{CurlPr6}) can be rewritten 
 as a standard variational inequality (see \cite{Han99}) and then the subsequant usage of 
 the techniques of the convex analysis enable the passage to the limit in the model equations.
 Contrary to this, in the rate-independent case this reduction to a single variational inequality
 is not possible and one is forced to use the monotonicity argument to study the asymptotic 
 behavior of the third term $\Sigma^{\rm lin}_{\rm curl,\eta}$ in (\ref{microPr3}).
\paragraph{Notation.} 
Suppose that $\Omega$ is a bounded domain with a $C^2$-boundary $\partial\Omega$.
Throughout the whole work we choose the numbers 
$q, q^*$ satisfying the
following conditions 
\[1 < q, q^* < \infty \ \  {\rm and}\ \
1/q + 1/q^* = 1,\] 
and $|\cdot|$ denotes a norm in ${\mathbb R}^k$. Moreover, the following notations are used in this work. 
The space $W^{m,q}(\Omega, \cR^k)$ with $q \in [1, \infty]$ consists
of all functions in $L^q(\Omega, \cR^k)$ with weak derivatives in
$L^q(\Omega, \cR^k)$ up to order $m$. If $m$ is not integer, then $W^{m,q}(\Omega, \cR^k)$ denotes 
the corresponding Sobolev-Slobodecki space. 
We set $H^m(\Omega, \cR^k)= W^{m,2}(\Omega, \cR^k)$. The norm in
$W^{m,q}(\Omega, {\mathbb R}^k)$ is denoted by $\| \cdot \|_{m,q,\Omega}$
($\| \cdot \|_{q}:=\| \cdot \|_{0,q,\Omega}$). 
The operator $\Gamma_0$ defined by
\[\Gamma_0: v\in W^{1,q}(\Omega, \cR^k)\mapsto W^{1-1/q,q}(\partial\Omega, {\mathbb R}^k)\]
  denotes the usual trace operator. The space $W^{m,q}_0(\Omega, \cR^k)$ with $q \in [1, \infty]$ consists
of all functions $v$ in $W^{m,q}(\Omega, \cR^k)$ with $\Gamma_0v=0$.
One can define the bilinear 
form on the product space
$L^{q}(\Omega, {\cal M}^3)$$\times$$L^{q^*}(\Omega, {\cal M}^3)$ by
\[
(\xi, \zeta )_{\Omega} = \int_{\Omega} \xi(x) \cdot \zeta(x) dx.
\]
 The space
\[L^q_{\Curl}(\Omega, {\cal M}^3)=\{v\in L^q(\Omega, {\cal M}^3)\mid
\Curl v\in L^q(\Omega, {\cal M}^3)\}\]
is a Banach space with respect to the norm
\[\|v\|_{q,\Curl}=\|v\|_{q}+\|\Curl v\|_{q}.\]
The well known result on the generalized trace operator (see \cite[Section II.1.2]{Sohr01})
 can be easily
adopted to the functions with values in ${\cal M}^3$. Then, according
to this result, there is a bounded operator 
$\Gamma_n$ on $L^q_{\Curl}(\Omega, {\cal M}^3)$ 
\[\Gamma_n: v\in L^q_{\Curl}(\Omega, {\cal M}^3)
\mapsto\big(W^{1-1/{q^*},q^*}(\partial\Omega, {\cal M}^3)\big)^{*}\]
with \[\Gamma_n v=v\times n\big|_{\partial\Omega}\ {\rm if}\
v\in C^1(\bar{\Omega}, {\cal M}^3),\]
where $X^*$ denotes the dual of a Banach space $X$.
 Next,
\[L^{q}_{\Curl,0}(\Omega, {\cal M}^3)=
\{w\in L^{q}_{\Curl}(\Omega,{\cal M}^3)\mid \Gamma_n(w)=0\}.\]
Let us define spaces $V^q(\Omega,{\cal M}^3)$ and $X^q(\Omega,{\cal M}^3)$ by
\[V^q(\Omega, {\cal M}^3)=\{v\in L^q(\Omega, {\cal M}^3)\mid
\di v, \Curl v\in L^q(\Omega, {\cal M}^3), \Gamma_n v=0\},\]
\[X^q(\Omega, {\cal M}^3)=\{v\in L^q(\Omega, {\cal M}^3)\mid
\di v, \Curl v\in L^q(\Omega, {\cal M}^3), \Gamma_0 v=0\},\]
which are Banach spaces with respect to the norm
\[\|v\|_{V^q}(\|v\|_{X^q})=\|v\|_{q}+\|\Curl v\|_{q}+\|\di v\|_{q}.\]
According to \cite[Theorem 2]{Kozono09}\footnote{This theorem has to 
be applied to each row of a function with values in ${\cal M}^3$ to obtain
 the desired result.} the spaces 
$V^q(\Omega, {\cal M}^3)$ and $X^q(\Omega,{\cal M}^3)$ are continuously imbedded into
$W^{1,q}(\Omega, {\cal M}^3)$. We define $V^q_\sigma(\Omega,{\cal M}^3)$ and
$X^q_\sigma(\Omega,{\cal M}^3)$ by
\[V^q_\sigma(\Omega,{\cal M}^3):=\{v\in V^q(\Omega,{\cal M}^3)\mid \di v =0\},\]
\[X^q_\sigma(\Omega,{\cal M}^3):=\{v\in X^q(\Omega,{\cal M}^3)\mid \di v =0\},\]
and denote by $V^q_{har}(\Omega,{\cal M}^3)$ and $X^q_{har}(\Omega,{\cal M}^3)$ the $L^q$-spaces of 
harmonic functions on $\Omega$ as 
\[V^q_{har}(\Omega,{\cal M}^3):=\{v\in V^q_\sigma(\Omega,{\cal M}^3)\mid \Curl v =0\},\]
\[X^q_{har}(\Omega,{\cal M}^3):=\{v\in X^q_\sigma(\Omega,{\cal M}^3)\mid \Curl v =0\},\]
Then the spaces 
$V^q_{har}(\Omega,{\cal M}^3)$ and $X^q_{har}(\Omega,{\cal M}^3)$ for every 
fixed $q$, $1<q<\infty$,
 coincides with the spaces $V_{har}(\Omega,{\cal M}^3)$ and $X_{har}(\Omega,{\cal M}^3)$ given by
\[V_{har}(\Omega,{\cal M}^3)=\{v\in C^\infty(\bar\Omega, {\cal M}^3)\mid
\di v=0,  \Curl v=0 \ {\rm with}\ v\cdot n=0 \ {\rm on}\ \partial\Omega\},\]
\[X_{har}(\Omega,{\cal M}^3)=\{v\in C^\infty(\bar\Omega, {\cal M}^3)\mid
\di v=0,  \Curl v=0 \ {\rm with}\ v\times n=0 \ {\rm on}\ \partial\Omega\},\]
respectively (see \cite[Theorem 2.1(1)]{Kozono09}).
The spaces $V_{har}(\Omega,{\cal M}^3)$ and $X_{har}(\Omega,{\cal M}^3)$
 are finite dimensional vector spaces (\cite[Theorem 1]{Kozono09}).

 We also define the space 
$Z^q_{\Curl}(\Omega, {\cal M}^3)$ by
\[Z^q_{\Curl}(\Omega, {\cal M}^3)=\{v\in L^q_{\Curl,0}(\Omega, {\cal M}^3)\mid
\Curl\Curl v\in L^q(\Omega, {\cal M}^3)\},\]
which is a Banach space with respect to the norm 
$$\|v\|_{Z^q_{\Curl}}=\|v\|_{q,\Curl}+\|\Curl\Curl v\|_{q}.$$ 

The space $W^{m,q}_{per}(Y,{\mathbb R}^k)$ denotes the Banach space of $Y$-periodic
functions in $W^{m,q}_{loc}({\mathbb R}^k,{\mathbb R}^k)$ equipped with the 
$W^{m,q}(Y,{\mathbb R}^k)$-norm.

For
functions $v$ defined on $\Omega \times [0,\infty)$ we denote by
$v(t)$ the mapping $x \mapsto v(x,t)$, which is defined on $\Omega$.
 The space $L^q(0,T_e; X)$ denotes the Banach space of all Bochner-measurable 
functions $u:[0,T_e)\to X$ such that $t\mapsto\|u(t)\|^q_X$ is integrable
on $[0,T_e)$. Finally, we frequently use the spaces $W^{m,q}(0,T_e;X)$, 
which consist of Bochner measurable functions having $q$-integrable weak
derivatives up to order $m$.
\section{Maximal monotone operators}\label{MonoOpers}

In this section we recall some basics about monotone 
and maximal monotone operators. For more details see 
\cite{Barb76,Hu97,Pas78}, for example.

 Let $V$ be a reflexive Banach space with the norm $\|\cdot\|$, $V^*$ 
be its dual space with the norm $\|\cdot\|_*$. The
brackets $\left< \cdot ,\cdot \right>$ denotes the dual pairing between
$V$ and $V^*$. Under $V$ we shall always mean a reflexive Banach space
throughout this section.
 For a multivalued mapping $A:V \to 2^{V^*}$ the sets
\[D(A)=\{v\in V\mid Av\not=\emptyset\} \] { and }
\[ Gr A=\{[v,v^*]\in V\times V^*\mid v\in D(A),\ v^*\in Av\}\]
are called the {\it effective domain} and the {\it graph} of $A$, respectively.

\begin{Def}
A mapping $A:V \to 2^{V^*}$ is called {\rm monotone} if and only if the
inequality holds
 \[\left< v^* - u^*, v - u \right> \ge 0  \ \ \ \ \forall \ [v,v^*],
 [u,u^*]\in Gr A.\]
 
A monotone mapping $A:V \to 2^{V^*}$ is called {\rm maximal monotone} iff the
inequality
 \[\left< v^* - u^*, v - u \right> \ge 0  \ \ \ \ \forall \ [u,u^*]\in Gr A\]
implies $[v,v^*]\in Gr A$.

A mapping $A:V \to 2^{V^*}$ is called {\rm generalized pseudomonotone} iff the
set $Av$ is closed, convex and bounded for all $v\in D(A)$ and for every pair of 
sequences $\{v_n\}$ and $\{v^*_n\}$ such that $v^*_n\in Av_n$, 
$v_n\rightharpoonup v_0$, $v^*_n\rightharpoonup v^*_0\in V^*$ and
 \[\limsup_{n\to\infty}\left< v^*_n, v_n - v_0 \right> \le 0, \]
we have that $[v_0,v_0^*]\in Gr A$ and $\left< v^*_n, v_n\right>\to\left< v^*_0, v_0\right>$.

A mapping $A:V \to 2^{V^*}$ is called {\rm strongly coercive} iff either 
$D(A)$ is bounded or $D(A)$
is unbounded and the condition
\[ \frac{\left< v^*, v - w \right>}{\|v\|} \to +\infty \ \ \ as \
\|v\| \to \infty, \ \ [v,v^*] \in Gr A,\]
is satisfied for each $w \in D(A)$. 
\end{Def}
It is well known (\cite[p. 105]{Pas78}) that if $A$ is a maximal monotone
operator, then for any $v\in D(A)$
the image $Av$ is a closed convex subset of $V^*$ and the graph
$Gr A$ is demi-closed.\footnote{A set $A\in V\times V^*$ is demi-closed
if $v_n$ converges strongly to $v_0$ in $V$ and $v^*_n$ converges weakly
to $v^*_0$ in $V^*$ (or $v_n$ converges weakly to $v_0$ in $V$ and $v^*_n$ converges strongly to $v^*_0$ in $V^*$) and $[v_n,v_n^*]\in Gr A$,
 then $[v,v^*]\in Gr A$} A maximal monotone operator is also generalized
 pseudomonotone (see \cite{Barb76,Hu97,Pas78}).  
\begin{Rem}\label{SubMax} {\rm We recall that the subdifferential of
 a lower semi-continuous and
convex function is maximal monotone (see \cite[Theorem 2.25]{Phel93}).}
\end{Rem}
\begin{Def}
The {\rm duality mapping} $J:V \to 2^{V^*}$ is defined by
\[J(v)=\{v^*\in V^*\ | \ \left< v^*, v \right> = \|v\|^2=\|v^*\|_*^2\ \} \]
for all $v \in V$. 
\end{Def}
Without loss of generality (due to Asplund's theorem) we can assume that
both $V$ and $V^*$ are strictly convex, i.e. that the unit ball in
 the corresponding space is strictly 
convex. In virtue of \cite[Theorem II.1.2]{Barb76}, the equation
\[J(v_\lambda - v)+\lambda Av_\lambda\ni 0\]
has a solution $v_\lambda\in D(A)$ for every $v\in V$ and $\lambda >0$ if
$A$ is maximal monotone. The solution is unique (see \cite[p. 41]{Barb76}).
\begin{Def}
Setting
\[v_\lambda=j^A_{\lambda}v \ \ \ { and} \ \ \ 
A_\lambda v=-\lambda^{-1}J(v_\lambda - v)\] 
we define two single valued operators: 
the {\rm Yosida approximation} $A_{\lambda}:V\to V^*$ and the
 {\rm resolvent} $j^A_{\lambda}:V\to D(A)$ with 
$D(A_{\lambda})=D(j^A_{\lambda})=V$.
\end{Def}
By the definition, one immediately sees that 
$A_\lambda v\in A\big( j^A_{\lambda}v \big)$. For the main properties
of the Yosida approximation we refer to \cite{Barb76,Hu97,Pas78} 
and mention only that both are continuous operators and that $A_\lambda$
is bounded and maximal monotone.

\paragraph{Convergence of maximal monotone graphs}

In the presentation of the next subsections we follow the work
\cite{Damlamian07}, where the reader can also find the proofs
of the results mentioned here. 

The derivation of the homogenized equations for the initial boundary
value problem (\ref{CurlPr1}) - (\ref{CurlPr6}) is based on the
notion of the convergence of the graphs of maximal monotone 
operators. 
According to  Brezis \cite{Brez73} and Attouch \cite{Attouch84},
 the convergence of the graphs of
 maximal monotone operators is defined as follows.
\begin{Def} Let $A^n$, $A :V \to 2^{V^*}$ be maximal monotone 
operators. The sequence $A^n$ converges to $A$ as $n\to\infty$,
($A^n\rightarrowtail A$), if for every $[v,v^*]\in  Gr A$ there exists
 a sequence $[v_n,v_n^*]\in Gr A^n$ such that
 $[v_n,v_n^*]\to [v,v^*]$ strongly in  $V \times V^{*}$ as $n\to\infty$.
\end{Def}
Obviously, if $A^n$ and $A$ are everywhere defined, continuous and monotone,
then the pointwise convergence, i.e. if for every $v\in V$, $A^n(v)\to A(v)$, 
implies the convergence of the graphs. The converse is true in 
finite-dimensional spaces.

The next theorem is the main mathematical tool in the derivation of
the homogenized equations for the problem (\ref{CurlPr1}) - (\ref{CurlPr6}). 
\begin{Theor}\label{convMaxMonGraph} 
Let $A^n$, $A :V \to 2^{V^*}$ be maximal monotone 
operators, and let $[v_n,v_n^*]\in Gr A^n$  and $[v,v^*]\in V \times V^{*}$. 
If, as $n\to\infty$, $A^n\rightarrowtail A$, $v_n\rightharpoonup v_0$, 
$v^*_n\rightharpoonup v^*_0\in V^*$ and
 \begin{equation}\label{convMaxMonGraphCondition}
 \limsup_{n\to\infty}\left< v^*_n, v_n\right> \le \left< v^*_0, v_0\right>,
 \end{equation}
then $[v_0,v_0^*]\in Gr A$ and 
$$\liminf_{n\to\infty}\left< v^*_n, v_n\right>=\left< v^*_0, v_0\right>.$$
\end{Theor}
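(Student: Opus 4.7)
The plan is to exploit the maximal monotonicity of $A$ through a standard test-pair argument, using the graph convergence $A^n \rightarrowtail A$ to produce approximating pairs on which we can pass to the limit via combinations of weak and strong convergence.

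\textbf{Step 1: Show $[v_0,v_0^*]\in Gr A$.} Pick an arbitrary $[w,w^*]\in Gr A$. By the definition of $A^n\rightarrowtail A$, there exists a sequence $[w_n,w_n^*]\in Gr A^n$ with $w_n\to w$ in $V$ and $w_n^*\to w^*$ in $V^*$, both strongly. Since $[v_n,v_n^*],[w_n,w_n^*]\in Gr A^n$ and $A^n$ is monotone,
\begin{equation*}
\langle v_n^*-w_n^*,\, v_n-w_n\rangle \ge 0.
\end{equation*}
Expanding this pairing and taking $\limsup_{n\to\infty}$, I would use: the hypothesis $\limsup \langle v_n^*,v_n\rangle \le \langle v_0^*,v_0\rangle$; the weak-strong pairing convergences $\langle v_n^*,w_n\rangle\to\langle v_0^*,w\rangle$ and $\langle w_n^*,v_n\rangle\to\langle w^*,v_0\rangle$; and the strong-strong convergence $\langle w_n^*,w_n\rangle\to\langle w^*,w\rangle$. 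This yields $\langle v_0^*-w^*,\,v_0-w\rangle\ge 0$ for every $[w,w^*]\in Gr A$, and maximal monotonicity of $A$ then forces $[v_0,v_0^*]\in Gr A$.

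\textbf{Step 2: Identify the limit of $\langle v_n^*,v_n\rangle$.} Now that $[v_0,v_0^*]\in Gr A$, apply the graph convergence to this particular pair to obtain $[\tilde w_n,\tilde w_n^*]\in Gr A^n$ with $\tilde w_n\to v_0$ strongly in $V$ and $\tilde w_n^*\to v_0^*$ strongly in $V^*$. The monotonicity inequality for $A^n$ applied to $[v_n,v_n^*]$ and $[\tilde w_n,\tilde w_n^*]$ rearranges as
\begin{equation*}
\langle v_n^*,v_n\rangle \;\ge\; \langle v_n^*,\tilde w_n\rangle + \langle \tilde w_n^*,v_n\rangle - \langle \tilde w_n^*,\tilde w_n\rangle.
\end{equation*}
Passing to $\liminf$ on the left and using the same weak-strong and strong-strong convergences on the right (each of the three terms converges to $\langle v_0^*,v_0\rangle$), I get $\liminf_{n\to\infty}\langle v_n^*,v_n\rangle \ge \langle v_0^*,v_0\rangle$. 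Combining with the assumed $\limsup \le \langle v_0^*,v_0\rangle$ yields equality of $\liminf$ and $\limsup$, so in fact $\lim_{n\to\infty}\langle v_n^*,v_n\rangle = \langle v_0^*,v_0\rangle$, which in particular gives the stated conclusion $\liminf\langle v_n^*,v_n\rangle = \langle v_0^*,v_0\rangle$.

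The argument is essentially a reflexive-space analogue of the classical Minty trick combined with Brezis--Attouch graph convergence. No serious obstacle arises, since the only analytic ingredients are the elementary weak-strong pairing lemma in $V\times V^*$ and the maximality of $A$; the only point requiring care is to select, in Step 2, the approximating sequence for the specific target pair $[v_0,v_0^*]$ (which is only available \emph{after} Step 1 has established $[v_0,v_0^*]\in Gr A$), so Steps 1 and 2 must be performed in this order.
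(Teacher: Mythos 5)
Your proof is correct, and since the paper simply cites Damlamian for this result, there is no competing in-text argument to compare against; what you have written is the standard Brezis--Attouch/Minty-type argument that the cited reference uses. Both steps are sound: Step 1 correctly combines the monotonicity of $A^n$, the weak-strong and strong-strong pairing convergences, and the hypothesis on $\limsup\langle v_n^*,v_n\rangle$ to conclude $\langle v_0^*-w^*, v_0-w\rangle\ge 0$ for every $[w,w^*]\in Gr A$ and then invokes maximality, while Step 2 correctly uses the recovery sequence for the now-established pair $[v_0,v_0^*]$ to obtain the matching lower bound on $\liminf$, and you are right that the two steps must occur in this order.
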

\begin{proof}
 See \cite[Theorem 2.8]{Damlamian07}. 
\end{proof}
\begin{Rem}
We note that if a sequence $[v_n,v_n^*]\in Gr A^n$  in the definition of the  graph convergence of
 maximal monotone operators converges strongly to 
 some $[v,v^*]$ in $V \times V^{*}$ as $n\to\infty$, then 
 the condition (\ref{convMaxMonGraphCondition}) is satisfied and due to 
 Theorem~\ref{convMaxMonGraph} the limit $[v,v^*]$ belongs to the graph of the operator $A$.
 \end{Rem}
 The convergence of the graphs of multi-valued maximal monotone operators
can be equivalently stated in term of the pointwise convergence of 
the corresponding single-valued Yosida approximations and resolvents. 
\begin{Theor}\label{convMaxMonGrEquiv} 
Let $A^n$, $A :V \to 2^{V^*}$ be maximal monotone 
operators and $\lambda> 0$. The following statements are equivalent:
\begin{itemize}
       \item[(a)] $A^n\rightarrowtail A$ as $n\to\infty$;\vspace{-1ex}
       \item[(b)] for every $v\in V$, $j^{A^n}_{\lambda}v\to j^{A}_{\lambda}v$ 
as $n\to\infty$;\vspace{-1ex}
       \item[(c)] for every $v\in V$, ${A^n}_{\lambda}v\to {A}_{\lambda}v$ as $n\to\infty$;\vspace{-1ex}
     \item[(d)] $A^n_{\lambda}\rightarrowtail A_{\lambda}$ as $n\to\infty$.
       \end{itemize}
Moreover, the convergences $j^{A^n}_{\lambda}v\to j^{A}_{\lambda}v$ and 
${A^n}_{\lambda}v\to {A}_{\lambda}v$ are uniform on strongly compact 
subsets of $V$.
\end{Theor}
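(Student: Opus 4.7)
The plan is to establish the four-way equivalence via the cyclic chain (a)$\Rightarrow$(b)$\Rightarrow$(c)$\Leftrightarrow$(d)$\Rightarrow$(a), with the decisive step being (a)$\Rightarrow$(b); the remaining arrows will follow from algebraic manipulations of the resolvent identity $\lambda A^n_\lambda v = -J(j^{A^n}_\lambda v - v)$ together with the fact that the Yosida approximations $A^n_\lambda$ and $A_\lambda$ are everywhere defined, single-valued, and equicontinuous. Theorem~\ref{convMaxMonGraph} is the workhorse for (a)$\Rightarrow$(b). Throughout I would work under the (loss-of-generality-free) assumption, via Asplund's theorem, that both $V$ and $V^*$ are strictly convex, so that the duality map $J$ is single-valued and bicontinuous.

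For (a)$\Rightarrow$(b), fix $v\in V$ and set $u_n := j^{A^n}_\lambda v\in D(A^n)$, $u_n^* := A^n_\lambda v\in A^n(u_n)$, so that
\[
\lambda u_n^* + J(u_n-v) = 0, \qquad \|u_n-v\| = \lambda\|u_n^*\|_*.
\]
I would first establish boundedness of $\{u_n\}$ and $\{u_n^*\}$: pick any $[w,w^*]\in Gr A$ and, by (a), a sequence $[w_n, w_n^*]\in Gr A^n$ with $[w_n, w_n^*]\to[w,w^*]$ strongly; monotonicity of $A^n$ tested against $[w_n, w_n^*]$, combined with the resolvent identity, yields a quadratic inequality in $\|u_n-v\|$ with coefficients bounded in $n$. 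Extract weakly convergent subsequences $u_n\rightharpoonup u_0$, $u_n^*\rightharpoonup u_0^*$. Testing monotonicity of $A^n$ against sequences $[\hat u_n,\hat u_n^*]\in Gr A^n$ approximating a general $[z,z^*]\in Gr A$ bounds $\liminf\langle u_n^*, u_n\rangle$ from below, while the identity
\[
\langle u_n^*, u_n\rangle = \langle u_n^*, v\rangle - \lambda^{-1}\|u_n-v\|^2
\]
together with weak lower semicontinuity of the norm bounds $\limsup\langle u_n^*, u_n\rangle$ from above; this delivers the hypothesis of Theorem~\ref{convMaxMonGraph}, giving $[u_0,u_0^*]\in Gr A$ along with the crucial norm equality $\lim\|u_n-v\| = \|u_0-v\|$. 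Strict convexity of $V$ (the Kadec--Klee property) upgrades $u_n\rightharpoonup u_0$ to $u_n\to u_0$ strongly, hence $J(u_n-v)\to J(u_0-v)$ and $\lambda u_0^* + J(u_0-v)=0$. Uniqueness of the resolvent equation identifies $u_0 = j^A_\lambda v$, $u_0^* = A_\lambda v$, and a subsequence principle propagates the convergence to the whole sequence.

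The equivalence (b)$\Leftrightarrow$(c) follows immediately from the resolvent identity and bicontinuity of $J$ (noting $J^{-1}$ is the duality map of $V^*$). Since $A^n_\lambda,A_\lambda$ are everywhere defined, single-valued, and equi-Lipschitz with constant controlled by $\lambda$, graph convergence for them reduces to pointwise convergence, giving (c)$\Leftrightarrow$(d). For (d)$\Rightarrow$(a), given $[u,u^*]\in Gr A$ set $w := u + \lambda J^{-1}(u^*)$; then $u = j^A_\lambda w$ and $u^* = A_\lambda w$, so by (b) and (c) the elements $[j^{A^n}_\lambda w,\, A^n_\lambda w]\in Gr A^n$ converge strongly to $[u,u^*]$, which is precisely (a). For uniform convergence on a strongly compact $K\subset V$, I would argue by contradiction and extraction: failure at some $v_n\in K$ can, via $v_n\to v_\infty\in K$, be reduced to the pointwise statement at $v_\infty$ combined with equicontinuity estimates for $\{j^{A^n}_\lambda\}$ inherited from monotonicity of the $A^n$.

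The principal obstacle I anticipate is the weak-to-strong upgrade $u_n\to u_0$ in (a)$\Rightarrow$(b): the $\limsup$-bookkeeping around Theorem~\ref{convMaxMonGraph} must be arranged so that one extracts not merely graph membership of $[u_0,u_0^*]$ but also the norm identity $\lim\|u_n-v\|=\|u_0-v\|$, after which Banach-space geometry closes the argument. A secondary technical point is verifying enough uniform continuity for the family $\{j^{A^n}_\lambda\}$ to support the Arzel\`a--Ascoli-style extraction used in the uniform convergence on compacta.
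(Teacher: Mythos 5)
The paper does not prove Theorem~\ref{convMaxMonGrEquiv}; it merely cites \cite[Theorem 2.9]{Damlamian07}, so there is no in-paper argument to compare you against. Judged on its own, your outline has the right architecture: the chain (a)$\Rightarrow$(b)$\Rightarrow$(c)$\Rightarrow$(d)$\Rightarrow$(a), the resolvent identity $\lambda A^n_\lambda v=-J(j^{A^n}_\lambda v-v)$, the Asplund renorming, the step (b)$\Rightarrow$(c) via bicontinuity of $J$, and for (d)$\Rightarrow$(a) the observation that $w:=u+\lambda J^{-1}(u^*)$ gives $j^A_\lambda w=u$, $A_\lambda w=u^*$.

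The genuine gap sits exactly where you flag it, in (a)$\Rightarrow$(b), and your sketched $\limsup$-bookkeeping does not close it. Writing $u_n:=j^{A^n}_\lambda v$, $u_n^*:=A^n_\lambda v$, the identity $\langle u_n^*,u_n\rangle=\langle u_n^*,v\rangle-\lambda^{-1}\|u_n-v\|^2$ and weak lower semicontinuity give $\limsup_n\langle u_n^*,u_n\rangle\le\langle u_0^*,v\rangle-\lambda^{-1}\|u_0-v\|^2$ along a weakly convergent subsequence, but to invoke Theorem~\ref{convMaxMonGraph} you need $\limsup_n\langle u_n^*,u_n\rangle\le\langle u_0^*,u_0\rangle$, which would further require $-\lambda^{-1}\|u_0-v\|^2\le\langle u_0^*,u_0-v\rangle$; that is exactly the resolvent relation $\lambda u_0^*+J(u_0-v)=0$ you are trying to establish, and the lower bound obtained by testing monotonicity against an arbitrary $[z,z^*]\in Gr\,A$ points in the wrong direction. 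The repair is to test monotonicity against the \emph{specific} approximating sequence $[w_n,w_n^*]\to[u,u^*]:=[j^A_\lambda v,A_\lambda v]$ supplied by (a). Combining $\langle u_n^*-w_n^*,u_n-w_n\rangle\ge 0$ with $\lambda u_n^*=-J(u_n-v)$ and $\lambda u^*=-J(u-v)$ yields
\begin{equation*}
0\le\bigl\langle J(u_n-v)-J(w_n-v),\,u_n-w_n\bigr\rangle\le-\bigl\langle\lambda w_n^*+J(w_n-v),\,u_n-w_n\bigr\rangle\longrightarrow 0,
\end{equation*}
since $\lambda w_n^*+J(w_n-v)\to\lambda u^*+J(u-v)=0$ strongly while $u_n-w_n$ stays bounded. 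From this one reads off $\|u_n-v\|\to\|u-v\|$, identifies every weak cluster point $u_0$ of $u_n$ with $u$ via the norming characterization of $J$ and strict convexity, and only then invokes the Kadec--Klee property (which comes from the locally uniformly convex renorming behind the Asplund reduction, not from strict convexity alone) to upgrade $u_n\rightharpoonup u$, $\|u_n-v\|\to\|u-v\|$ to $u_n\to u$; Theorem~\ref{convMaxMonGraph} is in fact not needed for this direction. Two secondary cautions: the asserted $1/\lambda$-Lipschitz equicontinuity of $A^n_\lambda$ is a Hilbert-space fact, not a general Banach-space one, so it is safer to run the chain using only (c)$\Rightarrow$(d) (pointwise $\Rightarrow$ graph for everywhere-defined demicontinuous maps) and reach (a) directly from (d); and the uniform-on-compacta assertion needs a quantitative form of the displayed estimate, uniform in $v$ over the compact set, rather than an unquantified Arzel\`a--Ascoli appeal.
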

\begin{proof}
 See \cite[Theorem 2.9]{Damlamian07}. 
\end{proof}
\paragraph{Canonical extensions of maximal monotone operators.}\label{measurabilityMultis}
In this subsection we present briefly some facts about measurable
multi-valued mappings.
We assume that $V$, and hence $V^*$, is separable and denote 
the set of maximal monotone operators from $V$ to $V^*$
 by ${\mathfrak M}(V \times V^*)$. Further, let 
$(S, \Sigma(S), \mu)$ be a $\sigma-$finite $\mu-$complete
 measurable space. The notion of measurability 
for maximal monotone mappings can be defined in terms
of the measurability for appropriate single-valued mappings.
\begin{Def} 
A function $A:S\to{\mathfrak M}(V \times V^*)$ 
 is measurable iff for every $v\in E$, $x\mapsto j^{A(x)}_{\lambda}v$ is measurable
\end{Def}
For further reading on  measurable
multi-valued mappings we refer the reader to \cite{Castaing77,Damlamian07,Hu97,Pankov97}.

Given a mapping $A:S\to{\mathfrak M}(V \times V^*)$, one can define
 a monotone graph from $L^p(S,V)$ to $L^q(S,V^*)$, where
$1/p + 1/q = 1$, as follows:
\begin{Def}\label{CanExtension}
 Let $A:S\to{\mathfrak M}(V \times V^*)$, the canonical extension of $A$
 from $L^p(S,V)$ to $L^q(S,V^*)$, where
$1/p + 1/q = 1$, is defined by:
\[Gr {\cal A} = \{[v, v^*] \in L^p(S,V)\times L^q(S,V^*)\mid
 [v(x), v^*(x)] \in Gr A(x)\ for\ a.e.\ x\in S\}.\]
\end{Def}
Monotonicity of ${\cal A}$ defined in Definition~\ref{CanExtension}
 is obvious, while its maximality follows from the next proposition.
\begin{Prop}
 Let $A:S\to{\mathfrak M}(V \times V^*)$ be measurable. 
If $Gr {\cal A}\not= \emptyset$, then ${\cal A}$ is maximal monotone.
\end{Prop}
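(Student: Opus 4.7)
Monotonicity of $\mathcal{A}$ is immediate: for $[u_1,u_1^*],[u_2,u_2^*]\in Gr\,\mathcal{A}$, pointwise monotonicity of $A(x)$ yields $\langle u_1^*(x)-u_2^*(x),u_1(x)-u_2(x)\rangle_{V^*,V}\ge 0$ for a.e.\ $x\in S$, and integrating over $S$ gives the inequality defining monotonicity of $\mathcal{A}$ with respect to the dual pairing between $L^p(S,V)$ and $L^q(S,V^*)$.

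For maximality I would invoke the following Minty-type criterion on the reflexive Banach space $L^p(S,V)$: if $\mathcal{F}\colon L^p(S,V)\to L^q(S,V^*)$ is single-valued, everywhere defined, demi-continuous, strictly monotone and coercive, and if $R(\mathcal{F}+\mathcal{A})=L^q(S,V^*)$, then $\mathcal{A}$ is maximal monotone. Indeed, given $[u,u^*]\in L^p(S,V)\times L^q(S,V^*)$ satisfying $\int_S\langle u^*-v^*,u-v\rangle\,dx\ge 0$ for every $[v,v^*]\in Gr\,\mathcal{A}$, one chooses $[w,w^*]\in Gr\,\mathcal{A}$ with $\mathcal{F}(w)+w^*=\mathcal{F}(u)+u^*$; then $w^*-u^*=\mathcal{F}(u)-\mathcal{F}(w)$, which combined with the test inequality at $[w,w^*]$ and strict monotonicity of $\mathcal{F}$ forces $u=w$ and hence $u^*=w^*\in\mathcal{A}(u)$. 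A convenient concrete choice is the pointwise gauge map $\mathcal{F}(v)(x):=\|v(x)\|_V^{p-2}J_V(v(x))$, which satisfies all the listed properties.

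The range condition is verified by a pointwise construction. Given $f\in L^q(S,V^*)$, for a.e.\ $x\in S$ maximal monotonicity of $A(x)$ on $V$ (Minty applied to the gauge duality map $\|\cdot\|_V^{p-2}J_V$) provides $v(x)\in V$ with $\|v(x)\|_V^{p-2}J_V(v(x))+A(x)(v(x))\ni f(x)$. Measurability of $x\mapsto v(x)$ follows from the measurability hypothesis on $A$ via the resolvent characterization of the preceding Proposition (one rewrites $v(x)$ as a resolvent of $A(x)$ applied to a measurable function of $f(x)$). For integrability of $v\in L^p(S,V)$ and of $v^*:=f-\mathcal{F}(v)\in L^q(S,V^*)$ I use the hypothesis $Gr\,\mathcal{A}\ne\emptyset$ to fix a reference $[v_0,v_0^*]\in Gr\,\mathcal{A}$; applying pointwise monotonicity of $A(x)$ between $(v(x),v^*(x))$ and $(v_0(x),v_0^*(x))$ yields pointwise estimates of $\|v(x)\|_V$ in terms of $\|f(x)\|_{V^*}$, $\|v_0(x)\|_V$ and $\|v_0^*(x)\|_{V^*}$, which integrate to the required global $L^p$ and $L^q$ bounds.

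The main obstacle is precisely this interplay of measurability and integrability: measurability of the pointwise selection is what the measurability of $A$ buys us (this is where the equivalent characterization of the preceding Proposition is essential), while integrability is what the nonemptiness of $Gr\,\mathcal{A}$ buys us (the anchor $[v_0,v_0^*]$ is indispensable for converting pointwise monotone estimates into global $L^p$/$L^q$ bounds). Once both ingredients are in place, the constructed pair $[v,v^*]$ lies in $Gr\,\mathcal{A}$ and satisfies $\mathcal{F}(v)+v^*=f$, so $\mathcal{F}+\mathcal{A}$ is surjective and $\mathcal{A}$ is maximal monotone.
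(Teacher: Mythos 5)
The paper itself gives no argument for this Proposition; it only cites Damlamian, Meunier, and Van Schaftingen, so I assess your proposal on its own merits rather than against an in-paper proof.

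Your high-level strategy (Minty-type surjectivity criterion, pointwise construction, integrability via an anchor) is a standard and correct route. The monotonicity of $\mathcal{A}$ is indeed immediate. The Minty-type criterion you state — that for a monotone $\mathcal{A}$, surjectivity of $\mathcal{F}+\mathcal{A}$ with $\mathcal{F}$ single-valued and strictly monotone forces maximality of $\mathcal{A}$ — is correct, and for that implication you actually only use strict monotonicity of $\mathcal{F}$ (demicontinuity, boundedness, and coercivity are needed for the pointwise solvability via Browder's surjectivity theorem, which you also invoke). The gauge map $\mathcal{F}(v)(x)=\|v(x)\|_V^{p-2}J_V(v(x))$ is exactly the right choice: it sends $L^p(S,V)$ into $L^q(S,V^*)$ since $(p-1)q=p$, and it is the gauge duality map of $L^p(S,V)$. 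The integrability argument is also sound: pairing $v^*(x)-v_0^*(x)$ against $v(x)-v_0(x)$, substituting $v^*(x)=f(x)-\mathcal{F}(v)(x)$, and applying Young's inequality gives a pointwise bound $\|v(x)\|_V^p\le C\bigl(\|v_0(x)\|_V^p+(\|f(x)\|_{V^*}+\|v_0^*(x)\|_{V^*})^q\bigr)$, which integrates because $[v_0,v_0^*]\in L^p(S,V)\times L^q(S,V^*)$; then $v^*=f-\mathcal{F}(v)\in L^q$ follows from $(p-1)q=p$.

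The genuine gap is the measurability step. You claim the pointwise solution of $\|v(x)\|_V^{p-2}J_V(v(x))+A(x)v(x)\ni f(x)$ ``rewrites as a resolvent of $A(x)$ applied to a measurable function of $f(x)$.'' That identity is true only in the Hilbert case with $p=2$, where $J_V$ is the identity and the equation reads $v+A(x)v\ni f(x)$, i.e.\ $v(x)=j^{A(x)}_1 f(x)$. In a general reflexive Banach $V$ the duality map $J_V$ is nonlinear and non-additive, and for $p\neq 2$ the gauge map $\|\cdot\|_V^{p-2}J_V$ is not of the form $\lambda^{-1}J_V(\cdot-u)$ for any $u$; so the solution is not a resolvent and measurability does not follow in one stroke from the resolvent characterization. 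This is precisely the delicate content of the proposition — it is why the hypothesis ``$A$ measurable'' appears and why the resolvent characterization is stated just before it. The standard repair is to regularize: replace $A(x)$ by its Yosida approximation $A_\lambda(x)$, which is single-valued, demicontinuous, monotone, and measurable in $x$ for each fixed argument; solve the regularized equation, establish measurability of the resulting $v_\lambda$ by a Carath\'eodory/measurable implicit function argument (or approximate $f$ by simple functions); then let $\lambda\downarrow 0$ and use the Yosida limit theory together with pointwise monotonicity and the anchor $[v_0,v_0^*]$ to obtain $v$ as a pointwise limit of measurable functions. Alternatively one can invoke a measurable selection theorem directly. Either way, this step needs a real argument and cannot be dismissed with the as-stated resolvent identity.
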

\begin{proof}
 See \cite[Proposition 2.13]{Damlamian07}. 
\end{proof}
We have to point out here that the maximality of $A(x)$ 
for almost every $x\in S$ does not imply
 the maximality of ${\cal A}$ as 
the latter can be empty (\cite{Damlamian07}): 
$S = (0, 1),$ and $Gr A(x) = \{[v,v^*]\in {\mathbb R}\times
 {\mathbb R}\mid v^*= x^{-1/q}\}.$

For given mappings $A, A^n:S\to{\mathfrak M}(V \times V^*)$
 and their canonical extensions ${\cal A}, {\cal A}^n$, one can ask
whether the pointwise convergence 
$A^n(x)\rightarrowtail A(x)$ implies the convergence of
 the graphs of the corresponding canonical extensions
${\cal A}^n\rightarrowtail{\cal A}$. The answer is given by the next theorem.
\begin{Theor}\label{GraphConvCanExten}
Let $A, A^n:S\to{\mathfrak M}(V \times V^*)$ be measurable. Assume
\begin{itemize}
       \item[(a)] for almost every $x\in S$, $A^n(x)\rightarrowtail A(x)$ 
       as $n\to\infty$,\vspace{-1ex}
       \item[(b)] ${\cal A}$ and ${\cal A}^n$ are maximal monotone,\vspace{-1ex}
       \item[(c)] there exists $[\alpha_n,\beta_n]\in Gr {\cal A}^n$
        and $[\alpha,\beta]\in L^p(S,V)\times L^q(S,V^*)$
       such that $[\alpha_n,\beta_n]\to [\alpha,\beta]$ strongly
         in $L^p(S,V)\times L^q(S,V^*)$ as $n\to\infty$,
       \end{itemize}
 then ${\cal A}^n\rightarrowtail{\cal A}$.
\end{Theor}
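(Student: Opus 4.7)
The target is: for every $[v, v^*]\in Gr\,{\cal A}$, construct $[v_n, v_n^*]\in Gr\,{\cal A}^n$ converging strongly to $[v, v^*]$ in $L^p(S,V)\times L^q(S,V^*)$. The plan rests on two pillars: the equivalence of graph convergence with pointwise convergence of Yosida approximations and resolvents (Theorem~\ref{convMaxMonGrEquiv}), applied both $x$-by-$x$ in $S$ (by (a)) and at the level of the canonical extensions (by (b)).

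For fixed $\lambda>0$, I would first use maximal monotonicity of ${\cal A}$ (assumption (b)) to rewrite $[v,v^*]$ as $[j^{\cal A}_\lambda w,\,{\cal A}_\lambda w]$ for a suitable $w\in L^p(S,V)$, concretely $w=v+J^{-1}_p(\lambda v^*)$ where $J_p$ is the duality map of $L^p(S,V)$ (invoking Asplund's theorem so that both $V$ and $V^*$ are strictly convex). Then define
\[
v_n^\lambda := j^{{\cal A}^n}_\lambda w, \qquad w_n^\lambda := {\cal A}^n_\lambda w,
\]
which by construction form a pair $[v_n^\lambda, w_n^\lambda]\in Gr\,{\cal A}^n$; this is exactly where assumption (b) is used to ensure that the resolvent equation $J_p(u-w)+\lambda {\cal A}^n u\ni 0$ has a (unique) solution in $L^p(S,V)$.

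Next I would prove $v_n^\lambda\to v$ in $L^p(S,V)$ and $w_n^\lambda\to v^*$ in $L^q(S,V^*)$ as $n\to\infty$, for each fixed $\lambda$. The crucial identification is that the resolvent of the canonical extension acts pointwise, $(j^{{\cal A}^n}_\lambda w)(x) = j^{A^n(x)}_\lambda w(x)$ almost everywhere (and similarly for the Yosida approximation), which follows from measurability of $A^n$ and uniqueness of the resolvent. Granted this, assumption (a) combined with parts (b)--(c) of Theorem~\ref{convMaxMonGrEquiv} gives
\[
j^{A^n(x)}_\lambda w(x) \to j^{A(x)}_\lambda w(x), \qquad A^n(x)_\lambda w(x) \to A(x)_\lambda w(x),
\]
for a.e.\ $x\in S$. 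To upgrade this pointwise convergence to strong $L^p$/$L^q$ convergence I would use dominated convergence, with a majorant produced from the ``seed'' pair in (c): pointwise monotonicity of $A^n(x)$ at $(\alpha_n(x),\beta_n(x))$ yields a bound of the form
\[
\|j^{A^n(x)}_\lambda w(x)\|_V + \|A^n(x)_\lambda w(x)\|_{V^*} \le C\bigl(\|w(x)\|_V + \|\alpha_n(x)\|_V + \|\beta_n(x)\|_{V^*}\bigr),
\]
and the \emph{strong} convergence $[\alpha_n,\beta_n]\to[\alpha,\beta]$ assumed in (c) makes the right-hand side uniformly $L^p+L^q$-integrable in $n$. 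Finally, letting $\lambda\to 0$, classical Yosida theory for the maximal monotone ${\cal A}$ gives $j^{\cal A}_\lambda w\to v$ and ${\cal A}_\lambda w\to v^*$, so a diagonal extraction $(\lambda_k,n_k)$ delivers the required approximants.

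The main obstacle lies in the pointwise identification of the canonical extension's resolvent with the pointwise resolvent: the $L^p$-duality map $J_p$ does not literally factor through $J$ pointwise because of a global norm prefactor, and so the identification must be obtained via uniqueness of the resolvent equation and a compatibility check between the pointwise construction and the $L^p$-level equation. A secondary technical point is the integrable majorant: the proof really needs \emph{strong} (not merely bounded) convergence of the seeds in (c) in order to pass to the limit via Lebesgue's theorem, which is precisely why (c) is listed as an assumption rather than a consequence of (a)--(b).
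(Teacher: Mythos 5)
The paper does not prove this result; it only cites \cite[Proposition 2.16]{Damlamian07}. Judged on its own terms, your reconstruction contains a genuine gap that you yourself flag but then dismiss too lightly.

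The step that fails is the claimed identification $(j^{\mathcal{A}^n}_\lambda w)(x) = j^{A^n(x)}_\lambda w(x)$. This is not a mere compatibility check: it is \emph{false} for $p\neq 2$. The resolvent of the canonical extension on $L^p(S,V)$ is defined through the duality map $J_{L^p}$ of $L^p(S,V)$, and one computes
$J_{L^p}(u)(x) = \|u\|_{L^p(S,V)}^{2-p}\,\|u(x)\|_V^{p-2}\,J_V(u(x))$,
which carries both a global prefactor $\|u\|_{L^p}^{2-p}$ and a local factor $\|u(x)\|_V^{p-2}$. Substituting $u=j^{\mathcal{A}^n}_\lambda w - w$ into the defining equation $J_{L^p}(u)+\lambda\,\mathcal{A}^n(j^{\mathcal{A}^n}_\lambda w)\ni 0$, the pointwise equation reads $\|u\|_{L^p}^{2-p}\|u(x)\|_V^{p-2}J_V(u(x))+\lambda A^n(x)(\cdot)\ni 0$, which is not the pointwise resolvent equation $J_V(\cdot)+\lambda A^n(x)(\cdot)\ni 0$ unless $p=2$ (and $V$ Hilbert). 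So the pair $[j^{\mathcal{A}^n}_\lambda w,\,\mathcal{A}^n_\lambda w]$ you would like to compare with the a.e.\ limits of $j^{A^n(x)}_\lambda w(x)$ is simply not the pointwise resolvent pair, and the bridge between part (a) and the conclusion collapses at this point.

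Two further issues compound this. First, the seed $w=v+J^{-1}(\lambda v^*)$ need not lie in $L^p(S,V)$: since $\|J^{-1}(\lambda v^*(x))\|_V=\lambda\|v^*(x)\|_{V^*}$ and $v^*\in L^q(S,V^*)$, the perturbation belongs to $L^q(S,V)$, not $L^p(S,V)$, and $1/p+1/q=1$ forces $q<p$ exactly when $p>2$, so the ansatz $w\in L^p(S,V)$ already fails there. Second, even granting the pointwise construction $v_n(x)=j^{A^n(x)}_\lambda w(x)$, $v_n^*(x)=A^n(x)_\lambda w(x)$, your dominated-convergence majorant
$\|v_n(x)\|_V+\|v_n^*(x)\|_{V^*}\le C(\|w(x)\|_V+\|\alpha_n(x)\|_V+\|\beta_n(x)\|_{V^*})$
mixes an $L^p$-integrable quantity (needed for $v_n\in L^p$) with an $L^q$-integrable one (needed for $v_n^*\in L^q$); since the resolvent relation ties $\|v_n^*(x)\|_{V^*}=\lambda^{-1}\|v_n(x)-w(x)\|_V$, the same linear bound must serve both purposes, and the right-hand side has the wrong integrability for at least one of the two unless $p=q=2$. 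Finally, the closing ``diagonal extraction in $(\lambda_k,n_k)$'' is not coherent: with $w=w(\lambda)$ you have $j^{\mathcal{A}}_\lambda w(\lambda)=v$ exactly for every $\lambda$, so letting $\lambda\to 0$ accomplishes nothing; with $w$ fixed, $j^{\mathcal{A}}_\lambda w$ converges to the projection of $w$ onto $\overline{D(\mathcal{A})}$, not to $v$. A correct proof must construct $[v_n,v_n^*]$ pointwise (not via $j^{\mathcal{A}^n}_\lambda$), establish measurability through the resolvent characterization of measurable maximal monotone maps, and obtain separate $L^p$- and $L^q$-bounds for $v_n$ and $v_n^*$ from the seed in (c) via a more careful monotonicity estimate; this is precisely the content that is missing from your sketch.
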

\begin{proof}
 See \cite[Proposition 2.16]{Damlamian07}. 
\end{proof}
We note that assumption (c) in Theorem~\ref{GraphConvCanExten}
 can not be dropped in virtue of Remark 2.16 in \cite{Damlamian07}.

\section{The periodic unfolding}\label{periodicunfolding}

The derivation of the homogenized problem for (\ref{CurlPr1}) - (\ref{CurlPr6})
 is based on the periodic unfolding operator method. In 1990, Arbogast, Douglas and Hornung used
 a so-called dilation operator to study the homogenization of double-porosity periodic medium in
 \cite{Arbogast_Hornung_1990} (see \cite{Casado_Diaz_2000,Casado_Diaz_2001} for further applications of the method). 
 This idea has been extended and further developed in \cite{Cioranescu02} for two-scale and
 multi-scale homogenization under the name of "unfolding method". Nowadays there exists 
 an extensive literature concerning the applications and extensions of the unfolding operator method. 
 We recommend an interested reader to have a look into 
 the following survey papers \cite{Cioranescu12,Cioranescu08} and in the literature cited there. 
 We recall briefly the definition of the unfolding operator due to
 Cioranescu, Damlamian and Griso (\cite{Cioranescu02,Cioranescu08}): 

Let $\Omega\subset{\mathbb R}^3$ be an open set and $Y=[0,1)^3$. Let $(e_1,e_2,e_3)$ denote the standard basis in ${\mathbb R}^3$.
For $z\in{\mathbb R}^3$, $[z]_Y$ denotes
a linear combination $\sum_{j=1}^3 d_je_j$ with $\{d_1,d_2,d_3\}\in{\mathbb Z}$  such 
that $z - [z]_Y$ belongs to $Y$, and set
\[\{z\}_Y:=z - [z]_Y\in Y \hspace{2ex} v\in{\mathbb R}^3. \]
Then, for each $x\in{\mathbb R}^3$, one has
\[x=\eta\left(\left[\frac{x}{\eta}\right]_Y + y\right).\]
We use the following notations:
\[ \Xi_\eta=\{\xi\in{\mathbb Z}^k\mid\eta(\xi + Y)\subset\Omega\},\ \
\hat\Omega_\eta=\inter\left\{\bigcup_{\xi\in\Xi_\eta}\left(\eta\xi
 + \eta\overline Y\right)\right\},\ \
 \Lambda_\eta=\Omega\setminus\hat\Omega_\eta.\]
The set $\hat\Omega_\eta$  is the largest union of
$\eta(\xi + \overline Y)$ cells ($\xi\in{\mathbb Z}^3$) included 
in $\Omega$, while $\Lambda_\eta$ is
the subset of $\Omega$ containing the parts from $\eta(\xi + \overline Y)$
cells intersecting the boundary $\partial\Omega$.
\begin{Def}\label{UnfoldingOper}
 Let $Y$ be a reference cell, $\eta$ be a positive number and a map $v:\Omega\to {\mathbb R}^k$. 
The unfolding operator
${\cal T}_\eta(v):\Omega\times Y\to {\mathbb R}^k$ is defined by
\begin{eqnarray}\label{Unfolding}
\left({\cal T}_\eta v\right)(x,y):=\begin{cases}
    v\left(\eta\left[\frac{x}{\eta}\right]_Y + \eta y\right), & a.e.\ 
                                   (x,y)\in\hat\Omega_\eta\times Y,\\ 
    0, & a.e.\ (x,y)\in\Lambda_\eta\times Y.
\end{cases}
\end{eqnarray}
\end{Def}
From Definition~\ref{UnfoldingOper} it easily follows that,
 for $q\in [1,\infty[$, the operator ${\cal T}_\eta$ is linear and continuous
from $L^q(\Omega,{\mathbb R}^k)$ to $L^q(\Omega\times Y,{\mathbb R}^k)$ and that
or every $\phi$ in $L^1(\Omega,{\mathbb R}^k)$ one has 
\begin{eqnarray}\label{integration}
\frac1{|Y|}\int_{\Omega\times Y}{\cal T}_\eta(\phi)(x,y)dxdy=
          \int_{\hat\Omega_\eta}\phi(x)dx
\end{eqnarray}          
and
  $$\left|\int_{\hat\Omega_\eta}\phi(x)dx-
       \frac1{|Y|}\int_{\Omega\times Y}{\cal T}_\eta(\phi)(x,y)dxdy\right|
          \le\int_{\Lambda_\eta}|\phi(x)|dx.$$
Obviously, if $\phi_\eta\in L^1(\Omega,{\mathbb R}^k)$ satisfies
\begin{eqnarray}\label{uci}
\int_{\Lambda_\eta}|\phi_\eta(x)|dx\to 0,
\end{eqnarray}
then
\[\int_{\Omega}\phi_\eta(x)dx-
       \frac1{|Y|}\int_{\Omega\times Y}{\cal T}_\eta(\phi_\eta)(x,y)dxdy\to 0.\]
In \cite{Cioranescu08}, each sequence $\phi_\eta$ fulfilling \eq{uci} has been called 
the sequence satisfying unfolding criterion for integrals and this has been denoted as follows
\[\int_{\Omega}\phi_\eta(x)dx\overset{{\cal T}_\eta}{\simeq}\frac1{|Y|}\int_{\Omega\times Y}{\cal T}_\eta(\phi_\eta)(x,y)dxdy. \]
The fact, that we can not consider the integration on the righthand side in (\ref{integration}) 
over the whole domain $\Omega$ and have to establish the validity of the unfolding criterion 
for integrals for a sequence of functions, can cause some difficulty due to 
the necessity of the careful treatment of the boundary layer in (\ref{uci}). In 
\cite{Francu_2010,Francu_Svanstedt_2012} this problem 
has been resolved by extending the unfolding operator by the identity:
\begin{eqnarray}\label{NewUnfolding}
\left({\cal T}_\eta v\right)(x,y):=\begin{cases}
    v\left(\eta\left[\frac{x}{\eta}\right]_Y + \eta y\right), & a.e.\ 
                                   (x,y)\in\hat\Omega_\eta\times Y,\\ 
    v(x), & a.e.\ (x,y)\in\Lambda_\eta\times Y.
\end{cases}
\end{eqnarray}
The unfolding operator in \eq{NewUnfolding} conserves the integral, i.e. every $\phi$ in $L^1(\Omega,{\mathbb R}^k)$ one has 
\[
\frac1{|Y|}\int_{\Omega\times Y}{\cal T}_\eta(\phi)(x,y)dxdy=
          \int_{\Omega}\phi(x)dx,
\]
which implies that it is an isometry between $L^q(\Omega,{\mathbb R}^k)$ and
 $L^q(\Omega\times Y,{\mathbb R}^k)$. In case of a general bounded domain $\Omega$, i.e. when 
 $|\Lambda_\eta|>0$ and $|\Lambda_\eta|\to0$, 
 both definitions of the unfolding operator \eq{Unfolding}
and \eq{NewUnfolding} are equivalent for the sequences, which are bounded in 
$L^q(\Omega,{\mathbb R}^k)$. For the sequences, which are unbounded in 
$L^q(\Omega,{\mathbb R}^k)$, the definitions differ (see \cite[Section 4]{Francu_Svanstedt_2012}).
Since in this work we are dealing only with bounded sequence, we shall not introduce a new notation 
for the unfolding operator \eq{NewUnfolding} and use the results in \cite{Cioranescu08}, 
which are proved for bounded sequences in $L^q(\Omega,{\mathbb R}^k)$ and
 the unfolding operator defined by \eq{Unfolding}. 
\begin{Prop}\label{UnfoldWeakConv}
Let $q$ belong to $[1,\infty[$.
\begin{itemize}
    \item[(a)] For any $v\in L^q(\Omega,{\mathbb R}^k)$, 
${\cal T}_\eta(v)\to v$ strongly in $L^q(\Omega\times Y,{\mathbb R}^k)$,
    \item[(b)]  Let $v_\eta$ be a bounded sequence
   in $L^q(\Omega,{\mathbb R}^k)$ such that 
      $v_\eta\to v$ strongly in $L^q(\Omega,{\mathbb R}^k)$, then
      \[{\cal T}_\eta(v_\eta)\to v, \ \ strongly \ in\ 
           L^q(\Omega\times Y,{\mathbb R}^k).\]
    \item[(c)] For every relatively weakly compact sequence $v_\eta$ 
         in $L^q(\Omega,{\mathbb R}^k)$, 
             the corresponding ${\cal T}_\eta(v_\eta)$ 
         is relatively weakly compact in $L^q(\Omega\times Y,{\mathbb R}^k)$. 
      Furthermore,
       if
\begin{eqnarray}
{\cal T}_\eta(v_\eta)\rightharpoonup \hat{v} \ \ in \ 
L^q(\Omega\times Y,{\mathbb R}^k),\non
\end{eqnarray}
then
\begin{eqnarray}
v_\eta\rightharpoonup \frac1{|Y|}\int_Y\hat{v}dy \ \ in \ 
L^q(\Omega,{\mathbb R}^k).\non
\end{eqnarray}
    \end{itemize}
\end{Prop}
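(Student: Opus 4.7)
The plan is to handle the three assertions in order, leveraging the $L^q$-contractivity of ${\cal T}_\eta$ stated as (d) in the preceding proposition together with the almost-covering property $|\Lambda_\eta|\to 0$ as $\eta\to 0$.

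For part (a), I would first prove the claim on the dense subspace $C_c(\Omega,\mathbb{R}^k)$. If $v\in C_c(\Omega,\mathbb{R}^k)$, then on $\hat\Omega_\eta$ we have
\[
\bigl|{\cal T}_\eta(v)(x,y)-v(x)\bigr|
 = \Bigl|v\Bigl(\eta\bigl[\tfrac{x}{\eta}\bigr]_Y+\eta y\Bigr)-v(x)\Bigr|,
\]
and the two arguments differ by at most $\eta\,\mathrm{diam}(Y)$, so uniform continuity of $v$ gives a uniform bound that tends to $0$. Integrating over $\hat\Omega_\eta\times Y$ and controlling the remaining contribution from $\Lambda_\eta\times Y$ by the $L^q$-norm of $v$ times $|\Lambda_\eta|^{1/q}\to 0$, one obtains ${\cal T}_\eta(v)\to v$ in $L^q(\Omega\times Y)$. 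Then a standard density argument, using the bound $\|{\cal T}_\eta(w)\|_{q,\Omega\times Y}\le |Y|^{1/q}\|w\|_q$ from (d) of the previous proposition, extends the convergence to all of $L^q(\Omega,\mathbb{R}^k)$.

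Part (b) follows by splitting
\[
{\cal T}_\eta(v_\eta)-v = {\cal T}_\eta(v_\eta-v)+\bigl({\cal T}_\eta(v)-v\bigr).
\]
The $L^q$-norm of the first term is bounded by $|Y|^{1/q}\|v_\eta-v\|_q\to 0$ by hypothesis, and the second term goes to $0$ strongly by part (a). Neither the boundedness assumption on $v_\eta$ nor anything subtle beyond the contraction estimate is needed here.

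For part (c), boundedness of $\{v_\eta\}$ in $L^q(\Omega)$ (which is implied by relative weak compactness in the reflexive space $L^q$) combined with (d) gives uniform boundedness of $\{{\cal T}_\eta(v_\eta)\}$ in $L^q(\Omega\times Y)$, hence relative weak compactness. To identify the limit, assume along a subsequence that ${\cal T}_\eta(v_\eta)\rightharpoonup\hat v$. For an arbitrary test function $\psi\in L^{q^*}(\Omega,\mathbb{R}^k)$, I would apply property (c) of the previous proposition to $\phi_\eta:=v_\eta\cdot\psi\in L^1(\Omega)$. Hölder's inequality gives
\[
\int_{\Lambda_\eta}|v_\eta\psi|\,dx
 \le \|v_\eta\|_q\,\|\psi\,\mathbf{1}_{\Lambda_\eta}\|_{q^*},
\]
and the right-hand side tends to $0$ since $\|v_\eta\|_q$ is bounded and $|\Lambda_\eta|\to 0$ (dominated convergence on $\psi\in L^{q^*}$). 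Hence
\[
\int_\Omega v_\eta\psi\,dx
 \;\overset{{\cal T}_\eta}{\simeq}\;
 \frac{1}{|Y|}\int_{\Omega\times Y}{\cal T}_\eta(v_\eta)\,{\cal T}_\eta(\psi)\,dx\,dy,
\]
using the trivial identity ${\cal T}_\eta(v_\eta\psi)={\cal T}_\eta(v_\eta)\,{\cal T}_\eta(\psi)$. Now ${\cal T}_\eta(\psi)\to\psi$ strongly in $L^{q^*}(\Omega\times Y)$ by part (a) and ${\cal T}_\eta(v_\eta)\rightharpoonup\hat v$ weakly in $L^q(\Omega\times Y)$, so the weak-strong pairing converges and the right-hand side tends to $\frac{1}{|Y|}\int_\Omega\bigl(\int_Y\hat v(x,y)\,dy\bigr)\psi(x)\,dx$. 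Since $\psi$ was arbitrary, this identifies $v_\eta\rightharpoonup\frac{1}{|Y|}\int_Y\hat v\,dy$ weakly in $L^q(\Omega,\mathbb{R}^k)$.

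The only delicate step I anticipate is the uniform integrability argument on $\Lambda_\eta$ in part (c); the rest is a bookkeeping exercise using density, contractivity, and the weak-strong pairing. Once (a) is in hand, (b) and the limit-identification in (c) are essentially automatic.
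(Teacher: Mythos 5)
The paper does not prove this proposition itself; it simply cites Cioranescu, Damlamian and Griso, so there is no in-text argument to compare against. Your proof reconstructs what is essentially the standard argument for the unfolding operator: density of $C_c$ plus the $L^q$-contractivity of ${\cal T}_\eta$ for (a), a triangle-inequality split for (b), and the $\overset{{\cal T}_\eta}{\simeq}$ identity combined with a weak--strong duality pairing and the factorization ${\cal T}_\eta(v_\eta\,\psi)={\cal T}_\eta(v_\eta)\,{\cal T}_\eta(\psi)$ for (c). All of these ingredients are correct, and for $1<q<\infty$ the argument closes cleanly.

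The one genuine gap is the endpoint $q=1$, which the proposition as stated includes. In part (c) you invoke reflexivity of $L^q$ twice: once to say that relative weak compactness is equivalent to boundedness of $\{v_\eta\}$, and once to conclude that boundedness of $\{{\cal T}_\eta(v_\eta)\}$ in $L^q(\Omega\times Y)$ gives relative weak compactness there. Neither step holds when $q=1$. The correct route in $L^1$ is Dunford--Pettis: the hypothesis of relative weak compactness in $L^1(\Omega)$ is \emph{uniform integrability} of $\{v_\eta\}$, which (together with the measure-preserving structure of ${\cal T}_\eta$ on $\hat\Omega_\eta\times Y$) transfers to uniform integrability of $\{{\cal T}_\eta(v_\eta)\}$ and hence weak compactness in $L^1(\Omega\times Y)$. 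Likewise, your H\"older split $\int_{\Lambda_\eta}|v_\eta\psi|\le\|v_\eta\|_q\,\|\psi\,\mathbf 1_{\Lambda_\eta}\|_{q^*}$ fails for $q^*=\infty$; there you should instead write $\int_{\Lambda_\eta}|v_\eta\psi|\le\|\psi\|_\infty\int_{\Lambda_\eta}|v_\eta|\,dx$ and use the uniform integrability of $\{v_\eta\}$ together with $|\Lambda_\eta|\to0$ to send the right-hand side to zero. With those two adjustments the argument covers $q=1$ as well. A very small stylistic point in (a): for a general $v\in L^q$ the $\Lambda_\eta$-contribution is $|Y|\int_{\Lambda_\eta}|v|^q\,dx$, which tends to $0$ by absolute continuity of the Lebesgue integral rather than by a factor of $|\Lambda_\eta|^{1/q}$; the way you phrase it is only valid for bounded $v$, but since you apply it only to $v\in C_c$ the conclusion is unaffected.
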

\begin{proof}
 See \cite[Proposition 2.9]{Cioranescu08}. 
\end{proof}
Next results present some properties of the
restriction of the unfolding operator to the space 
$W^{1,q}(\Omega,{\mathbb R}^k)$.
\begin{Prop}
Let $q$ belong to $]1,\infty[$. 
 Let $v_\eta$ converge weakly in $W^{1,q}(\Omega,{\mathbb R}^k)$ 
to $v$. Then
\[{\cal T}_\eta(v_\eta)\rightharpoonup {v} \ \ in \ 
L^q(\Omega, W^{1,q}_{per}(Y,{\mathbb R}^k)).\]
\end{Prop}
\begin{proof}
 See \cite[Corollary 3.2, Corollary 3.3]{Cioranescu08}. 
\end{proof}
\begin{Prop}\label{UnfoldGradient}
Let $q$ belong to $]1,\infty[$. 
 Let $v_\eta$ converge weakly in $W^{1,q}(\Omega,{\mathbb R}^k)$ 
to some $v$. Then,
up to a subsequence, there exists some 
$\hat{v}\in L^q(\Omega, W^{1,q}_{per}(Y,{\mathbb R}^k))$
such that
\[{\cal T}_\eta(\na v_\eta)\rightharpoonup \na{v}+\na_y\hat{v} \ \ in 
\ L^q(\Omega\times Y,{\mathbb R}^k).\]
 \end{Prop}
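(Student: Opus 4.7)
The plan is to reduce the statement to part (a) of the preceding Proposition (whose hypothesis is $v_\eta\in W^{1,q}$ with $\eta\|\na v_\eta\|_q\le C$) by means of a two-scale decomposition $v_\eta = {\cal Q}_\eta v_\eta + w_\eta$, where ${\cal Q}_\eta v_\eta$ is a macroscopic piece whose unfolded gradient converges strongly to $\na v$ and $w_\eta$ is a microscopic remainder carrying the oscillations that produce $\na_y\hat v$. The key estimates on this decomposition must be $\|w_\eta\|_{L^q(\Omega)}\to 0$ and $\eta\|\na w_\eta\|_{L^q(\Omega)}$ bounded, so that part (a) of the preceding Proposition applies directly to $w_\eta$ along a subsequence.

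First I would construct ${\cal Q}_\eta v_\eta$ as the continuous $Q_1$-interpolate determined by the nodal averages
\[
M_\eta^\xi(v_\eta) := \frac{1}{\eta^3|Y|}\int_{\eta(\xi+Y)} v_\eta(z)\,dz,\qquad \xi\in{\mathbb Z}^3,\ \eta(\xi+\overline Y)\subset\Omega,
\]
at the vertices of the $\eta$-mesh, suitably extended near $\Lambda_\eta$. Poincar\'e--Wirtinger on each cell together with the $Q_1$-interpolation inequality then yields the bounds $\|v_\eta - {\cal Q}_\eta v_\eta\|_{L^q(\hat\Omega_\eta)} \le C\eta\|\na v_\eta\|_{L^q}$ and $\|\na {\cal Q}_\eta v_\eta\|_{L^q(\hat\Omega_\eta)}\le C\|\na v_\eta\|_{L^q}$. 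Since $v_\eta\to v$ strongly in $L^q(\Omega,{\mathbb R}^k)$ by Rellich's theorem, the nodal averages $M_\eta^\xi(v_\eta)$ approach the corresponding averages of $v$, and because on each cell $\na{\cal Q}_\eta v_\eta$ is a fixed linear combination of differences of neighbouring nodal averages, an explicit bookkeeping combined with Proposition~\ref{UnfoldWeakConv}(b) gives
\[
{\cal T}_\eta(\na {\cal Q}_\eta v_\eta)\to \na v\quad\text{strongly in }L^q(\Omega\times Y,{\mathbb R}^k).
\]

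Setting $w_\eta:= v_\eta-{\cal Q}_\eta v_\eta$, the above estimates show $w_\eta\to 0$ strongly in $L^q(\Omega,{\mathbb R}^k)$ and $\eta\|\na w_\eta\|_{L^q}$ bounded. Part (a) of the preceding Proposition, applied along a subsequence, then yields $\hat v\in L^q(\Omega,W^{1,q}_{per}(Y,{\mathbb R}^k))$ with ${\cal T}_\eta(\na w_\eta)\rightharpoonup \na_y\hat v$ in $L^q(\Omega\times Y,{\mathbb R}^k)$. Combining this with the strong convergence of the previous step via linearity of ${\cal T}_\eta$ applied to $\na v_\eta = \na{\cal Q}_\eta v_\eta + \na w_\eta$ produces the desired limit $\na v + \na_y\hat v$. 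The main obstacle is the strong convergence ${\cal T}_\eta(\na{\cal Q}_\eta v_\eta)\to \na v$: after unfolding, the cell-wise $Q_1$-gradient becomes a sum of shifted translates of the nodal averages over neighbouring cells, and one must carefully identify the limit of these shifted translates using the strong $L^q$-convergence of $v_\eta$ together with the uniform continuity of the averaging operation. A secondary technical point is the thin boundary layer $\Lambda_\eta$ on which ${\cal Q}_\eta$ is not naturally defined, but this is handled by the standard observation that $|\Lambda_\eta|\to 0$ and that ${\cal T}_\eta$ vanishes on $\Lambda_\eta\times Y$ by definition.
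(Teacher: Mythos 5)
The decomposition $v_\eta = {\cal Q}_\eta v_\eta + w_\eta$ with ${\cal Q}_\eta$ a $Q_1$-interpolate of cell averages is precisely the mechanism in Cioranescu--Damlamian--Griso that the paper cites, so the overall strategy is on target, and so is your identification of the strong convergence ${\cal T}_\eta(\nabla {\cal Q}_\eta v_\eta)\to\nabla v$ as the hard technical step. However, there is a genuine scaling gap in the corrector step. Your estimates give $\|w_\eta\|_{L^q}\le C\eta$ and $\|\nabla w_\eta\|_{L^q}\le C$, so $w_\eta\to 0$ strongly in $L^q(\Omega,\mathbb R^k)$. If you then apply part (a) of the preceding proposition to $w_\eta$ itself, as you propose, its first conclusion ${\cal T}_\eta(w_\eta)\rightharpoonup\hat v$ combined with Proposition~\ref{UnfoldWeakConv}(b) (which yields ${\cal T}_\eta(w_\eta)\to 0$ strongly in $L^q(\Omega\times Y,\mathbb R^k)$) forces $\hat v=0$, hence $\nabla_y\hat v=0$: the corrector disappears and you would wrongly conclude ${\cal T}_\eta(\nabla v_\eta)\rightharpoonup\nabla v$ with no oscillation term. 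The correct move is to apply part (a) to the rescaled remainder $\eta^{-1}w_\eta$, which is bounded in $L^q(\Omega,\mathbb R^k)$ by the Poincar\'e--Wirtinger bound $\|w_\eta\|_q\le C\eta$ and satisfies $\eta\|\nabla(\eta^{-1}w_\eta)\|_q=\|\nabla w_\eta\|_q\le C$; the corrector $\hat v$ is then the weak limit of ${\cal T}_\eta(\eta^{-1}w_\eta)$, and what converges to $\nabla_y\hat v$ is ${\cal T}_\eta(\eta\nabla(\eta^{-1}w_\eta))={\cal T}_\eta(\nabla w_\eta)$. (In tracking this one also notices that the second conclusion of the preceding part (a) must be read with the factor $\eta$ present, namely ${\cal T}_\eta(\eta\nabla v_\eta)\rightharpoonup\nabla_y\hat v$; taking the displayed form at face value is exactly what led into the above contradiction, and the contradiction is a signal you should have caught.) With this rescaling in place, the rest of your argument, namely the strong convergence of ${\cal T}_\eta(\nabla{\cal Q}_\eta v_\eta)$ to $\nabla v$ and the additivity of ${\cal T}_\eta$, goes through.
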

\begin{proof}
 See \cite[Theorem 3.5]{Cioranescu08}. 
\end{proof}
The last proposition can be generalized to $W^{m,q}(\Omega,{\mathbb R}^k)$-spaces
with $m\ge 1$.
\begin{Prop}\label{UnfoldGradientOrderM}
Let $q$ belong to $]1,\infty[$ and $m\ge1$. 
 Let $v_\eta$ converge weakly in $W^{m,q}(\Omega,{\mathbb R}^k)$ 
to some $v$. Then,
up to a subsequence, there exists some 
$\hat{v}\in L^q(\Omega, W^{m,q}_{per}(Y,{\mathbb R}^k))$
such that
\begin{eqnarray}
&&{\cal T}_\eta(D^l v_\eta)\rightharpoonup D^l{v} \ \ in 
\ L^q(\Omega,W^{m-l,q}(Y,{\mathbb R}^k))\ for \ |l|\le m-1, \non\\
&&{\cal T}_\eta(D^l v_\eta)\rightharpoonup D^l{v}+D^l_y\hat{v} \ \ in 
\ L^q(\Omega\times Y,{\mathbb R}^k)\ for \ |l|=m\non
\end{eqnarray}
\end{Prop}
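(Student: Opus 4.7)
My plan is to argue by induction on $m$, the base case $m=1$ being Proposition~\ref{UnfoldGradient}. In the inductive step I assume the statement for $W^{m,q}$ and let $v_\eta\rightharpoonup v$ in $W^{m+1,q}(\Omega,\mathbb{R}^k)$. For any multi-index $|l|\leq m$, the sequence $D^l v_\eta$ is bounded in $W^{m+1-|l|,q}(\Omega,\mathbb{R}^k)$ with $m+1-|l|\geq 1$, so Rellich--Kondrachov compactness gives $D^l v_\eta\to D^l v$ strongly in $L^q(\Omega,\mathbb{R}^k)$, and Proposition~\ref{UnfoldWeakConv}(b) upgrades this to $\mathcal{T}_\eta(D^l v_\eta)\to D^l v$ strongly in $L^q(\Omega\times Y,\mathbb{R}^k)$. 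To reach the stronger target space $L^q(\Omega,W^{m+1-|l|,q}(Y,\mathbb{R}^k))$ I invoke the scaling identity
\[D^\alpha_y\mathcal{T}_\eta(D^l v_\eta)=\eta^{|\alpha|}\mathcal{T}_\eta(D^{l+\alpha}v_\eta),\quad|l+\alpha|\leq m+1,\]
whose $L^q(\Omega\times Y)$-norm is bounded by $\eta^{|\alpha|}|Y|^{1/q}\|D^{l+\alpha}v_\eta\|_q\to 0$ for every $|\alpha|\geq 1$. Since $D^l v$ does not depend on $y$, this yields strong (hence weak) convergence in $L^q(\Omega,W^{m+1-|l|,q}(Y,\mathbb{R}^k))$, disposing of all ``no-correction'' terms $|l|\leq m$.

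For the top-order derivatives $|l|=m+1$, I note that for each $i\in\{1,2,3\}$ one has $\partial_i v_\eta\rightharpoonup\partial_i v$ in $W^{m,q}(\Omega,\mathbb{R}^k)$. A diagonal extraction combined with the inductive hypothesis produces a common subsequence and three profiles $\hat{w}_1,\hat{w}_2,\hat{w}_3\in L^q(\Omega,W^{m,q}_{per}(Y,\mathbb{R}^k))$ such that, for every $|\beta|=m$,
\[\mathcal{T}_\eta(D^{\beta+e_i}v_\eta)=\mathcal{T}_\eta(D^\beta\partial_i v_\eta)\rightharpoonup D^{\beta+e_i}v+D^\beta_y\hat{w}_i\quad\text{in }L^q(\Omega\times Y,\mathbb{R}^k).\]
Whenever a multi-index $\alpha$ with $|\alpha|=m+1$ admits two decompositions $\alpha=\beta+e_i=\beta'+e_j$, uniqueness of the weak limit forces $D^\beta_y\hat{w}_i=D^{\beta'}_y\hat{w}_j$, and in particular the Schwarz-type compatibility $\partial_{y_j}\hat{w}_i=\partial_{y_i}\hat{w}_j$.

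Using the freedom to add functions of $x$ to the $\hat{w}_i$ (which does not affect the correctors $D^\beta_y\hat{w}_i$), I normalize $\int_Y\hat{w}_i(\cdot,y)\,dy=0$. A periodic Poincar\'e lemma on the torus $\mathbb{R}^3/\mathbb{Z}^3$, equivalently a Fourier-series argument, then yields a unique zero-mean $\hat{v}\in L^q(\Omega,W^{m+1,q}_{per}(Y,\mathbb{R}^k))$ with $\partial_{y_i}\hat{v}=\hat{w}_i$ for every $i$. Consequently $D^\alpha_y\hat{v}=D^\beta_y\hat{w}_i$ for any decomposition $\alpha=\beta+e_i$, which delivers the required weak convergence $\mathcal{T}_\eta(D^l v_\eta)\rightharpoonup D^l v+D^l_y\hat{v}$ in $L^q(\Omega\times Y,\mathbb{R}^k)$ for $|l|=m+1$.

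The main obstacle is this final gluing step: one must verify the Schwarz-type compatibility of the three correctors, fix the normalization freedom, and invoke the periodic Poincar\'e lemma at the correct Sobolev level (three $W^{m,q}_{per}(Y)$ data producing a common $W^{m+1,q}_{per}(Y)$ primitive). Everything else reduces to a careful bookkeeping with multi-indices and the scaling identity for $D^\alpha_y\mathcal{T}_\eta$.
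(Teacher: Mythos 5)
The paper does not give its own proof of this proposition: it is established by a citation to Cioranescu--Damlamian--Griso \cite{Cioranescu08} (their Theorem~3.6), where the higher-order statement is obtained by iterating the scale-splitting estimates of the $m=1$ case directly rather than by induction-plus-gluing. Your approach is therefore a genuinely different route, and in outline it is sound: the scaling identity $D^\alpha_y{\cal T}_\eta=\eta^{|\alpha|}{\cal T}_\eta D^\alpha$ together with Rellich compactness and Proposition~\ref{UnfoldWeakConv}(b) does dispose of all no-correction orders $|l|\le m$; and applying the inductive hypothesis to the three sequences $\partial_i v_\eta$ and gluing the resulting correctors is the right idea for the top order.

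One step is, however, not as immediate as you state. Uniqueness of weak limits yields $D^\beta_y\hat{w}_i=D^{\beta'}_y\hat{w}_j$ only for $|\beta|=|\beta'|=m$, i.e.\ $D^\gamma_y(\partial_{y_j}\hat{w}_i-\partial_{y_i}\hat{w}_j)=0$ for all $|\gamma|=m-1$. For $m\ge 2$ this does \emph{not} by itself give $\partial_{y_j}\hat{w}_i=\partial_{y_i}\hat{w}_j$: it only forces the difference to be a polynomial of degree $\le m-2$ in $y$, hence (by $Y$-periodicity) a constant, and one must add the observation that a first $y$-derivative of a $Y$-periodic function has zero $Y$-mean to conclude that this constant vanishes. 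Without that extra argument the ``in particular'' is a real gap. Alternatively you can sidestep the first-order Schwarz identity entirely: the order-$m$ compatibility you actually derive is exactly the consistency condition needed to define the order-$(m+1)$ Fourier coefficients of $\hat{v}$, so the primitive can be reconstructed directly from the family $\{D^\beta_y\hat{w}_i\}_{|\beta|=m}$. Finally, note that landing the primitive in $L^q(\Omega,W^{m+1,q}_{per}(Y,{\mathbb R}^k))$ for $q\ne2$ requires a Fourier multiplier theorem (Marcinkiewicz/Mikhlin), and that the measurability of $x\mapsto\hat{v}(x,\cdot)$ must be secured (e.g.\ via the boundedness and linearity of the anti-differentiation operator); both points are standard but should be flagged rather than absorbed into ``a Fourier-series argument.''
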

\begin{proof}
 See \cite[Theorem 3.6]{Cioranescu08}. 
\end{proof}
For a multi-valued function 
$h\in {\mathbb M}(\Omega,{\mathbb R}^k,\alpha,m)$\footnote{The class of
functions $h\in {\mathbb M}(\Omega,{\mathbb R}^k,\alpha,m)$ 
is defined in Definition~\ref{CoercClass}.} we define
the unfolding operator as follows.
\begin{Def}\label{UnfoldingOperMulti}
 Let $Y$ be a periodicity cell, $\eta$ be a positive number and a map 
$h\in {\mathbb M}(\Omega,{\mathbb R}^k,p,\alpha,m)$. 
The unfolding operator
${\cal T}_\eta(h):\Omega\times Y\times{\mathbb R}^k\to 2^{{\mathbb R}^k}$ is defined by
\[{\cal T}_\eta(h)(x,y,v):=\begin{cases}
    h\left(\eta\left[\frac{x}{\eta}\right]_Y + \eta y,v\right), & a.e.\ 
                                   (x,y)\in\hat\Omega_\eta\times Y,\; v\in{\mathbb R}^k,\\ 
    |v|^{p-2}v, & a.e.\ (x,y)\in\Lambda_\eta\times Y,\; v\in{\mathbb R}^k.
\end{cases}\]
\end{Def}
Obviously, by its definition the unfolding operator of a multi-valued function
from ${\mathbb M}(\Omega,{\mathbb R}^k,\alpha,m)$ belongs to the set 
${\mathbb M}(\Omega\times Y,{\mathbb R}^k,\alpha,m)$.

We note that the periodic unfolding method described above is an
alternative to the two-scale convergence method introduced in 
\cite{Nguetseng89} and further developed in \cite{Alla92}. More precisely,
the two-scale convergence of a bounded sequence $v_\eta$
 in $L^p(\Omega,{\mathbb R}^k)$
is equivalent to the weak convergence of the corresponding unfolded
sequence ${\cal T}_\eta(v_\eta)$ in $L^p(\Omega\times Y,{\mathbb R}^k)$
(see \cite[Proposition 2.14]{Cioranescu08} or \cite{Francu_2010,Francu_Svanstedt_2012,Lenczner_1997}).

\section{Unfolding the $\Curl\Curl$-operator}

Our method is based  on the Helmholtz-Weyl decomposition for vector fields in general
$L^q$-spaces over a domain $\Omega$ with a $C^2$-boundary $\partial\Omega$. It turns out (see \cite[Theorem 2.1(2)]{Kozono09})
that the following theorem holds.
\begin{Theor}\label{HelmholtzZerlegungTh} 
Let $1<q<\infty$.
Every $v\in L^q(\Omega,{\mathbb R}^3)$ can be uniquely decompose as
\begin{eqnarray}\label{HelmholtzZerlegung}
v=h+\Curl w+\na z,
\end{eqnarray}
where $h\in X^q_{har}(\Omega,\cR^3)$, $w\in V^q_{\sigma}(\Omega,\cR^3)$ and 
$z\in W^{1,q}(\Omega,\cR^3)$, and the triple $(h,w,z)$ satisfies the inequality
\begin{eqnarray}\label{EstimateHelmholtzZerlegungMain}
\|h\|_{q}+\|w\|_{1,q,\Omega}+\|z\|_{1,q,\Omega}\le C\|v\|_q,
\end{eqnarray}
where $C$ is a constant depending on $\Omega$ and $q$.
 If there is another triple of functions $(\tilde h, \tilde w, \tilde z)$
such that $v$ can be written in the form
\begin{eqnarray}
v=\tilde h+\Curl\tilde w+\na\tilde z,\non
\end{eqnarray}
with $\tilde h\in X^q_{har}(\Omega,\cR^3)$, $\tilde w\in V^q_{\sigma}(\Omega,\cR^3)$
and 
$\tilde z\in W^{1,q}(\Omega,\cR^3)$, then it holds
\[h=\tilde h, \ \ \Curl w=\Curl \tilde w, \ \ \na z=\na \tilde z.\]
\end{Theor}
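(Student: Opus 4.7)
The plan is to extract the three components of the decomposition sequentially via $L^q$ elliptic theory, and then to establish uniqueness by successively applying divergence and curl to the residual equation and invoking the boundary conditions built into the spaces $V^q_\sigma$ and $X^q_{har}$. The overall structure mirrors the classical $L^2$ Helmholtz decomposition, but extended to general $1 < q < \infty$.

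First I would extract the gradient part by solving the weak Neumann problem
\begin{equation*}
\int_\Omega \nabla z \cdot \nabla \phi \, dx = \int_\Omega v \cdot \nabla \phi \, dx \qquad \forall\, \phi \in W^{1,q^*}(\Omega),
\end{equation*}
for $z \in W^{1,q}(\Omega, \mathbb{R}^3)$ normalized by $\int_\Omega z \, dx = 0$. Unique solvability together with the bound $\|z\|_{1,q,\Omega} \le C \|v\|_q$ comes from the classical $L^q$-theory for the Neumann Laplacian on $C^1$-domains. The residual field $u := v - \nabla z$ is then divergence-free with vanishing normal trace in the generalized sense admissible for $L^q$ solenoidal fields.

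Second, I would peel off the harmonic component by projecting $u$ onto the finite-dimensional space $X^q_{har}$ to obtain $h$, and set $\tilde u := u - h$. It remains to produce $w \in V^q_\sigma(\Omega, \mathbb{R}^3)$ with $\Curl w = \tilde u$. I would construct $w$ by solving the elliptic boundary value problem $-\Delta w = \Curl \tilde u$, $\di w = 0$ in $\Omega$, with $w \times n = 0$ on $\partial \Omega$. The $L^q$-theory for this system yields $w \in W^{1,q}(\Omega, \mathbb{R}^3)$ with $\|w\|_{1,q,\Omega} \le C \|\tilde u\|_q$; the identities $\di \tilde u = 0$, $\tilde u \cdot n = 0$ and the orthogonality of $\tilde u$ to $X^q_{har}$ then yield $\Curl w = \tilde u$ by direct computation. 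Summing the two steps produces the decomposition together with the estimate \eq{EstimateHelmholtzZerlegungMain}. For uniqueness, set $H := h_1 - h_2$, $W := w_1 - w_2$, $Z := z_1 - z_2$ and rewrite the hypothesis as $H + \Curl W + \nabla Z = 0$. Taking divergence yields $\Delta Z = 0$ and taking curl yields $\Curl \Curl W = 0$; combined with the boundary traces built into $V^q_\sigma$ and $X^q_{har}$, and the uniqueness assertions of the Neumann and vector-potential problems above, this forces $\nabla Z = 0$ and $\Curl W = 0$, hence $H = 0$.

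The main obstacle is the vector potential step: producing $w$ with the prescribed mixed tangential and divergence-free boundary data together with a sharp $W^{1,q}$ bound requires Calder\'on--Zygmund type $L^q$ estimates for the $(-\Delta, \di, \Curl)$ elliptic system, along with a compatibility analysis that identifies the obstruction as living precisely in the finite-dimensional harmonic space $X^q_{har}$ corresponding to the de Rham cohomology of $\Omega$. This is the technical heart of Theorem 2.1(2) of Kozono--Yanagisawa that the paper invokes rather than reproving.
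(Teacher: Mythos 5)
The paper offers no proof of Theorem~\ref{HelmholtzZerlegungTh}: it is stated as a direct citation of Kozono--Yanagisawa \cite[Theorem~2.1(2)]{Kozono09}, and all the hard $L^q$ work is delegated there. Your proposal sketches the shape of that argument but is not an independent proof --- as you yourself acknowledge in the last paragraph, the vector-potential step (existence and $W^{1,q}$ estimate for $w$ solving the mixed elliptic system, together with the compatibility/de Rham analysis) is precisely the content of the cited result. So the comparison is really ``citation versus proof sketch that still ultimately cites the same reference.''

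Two points in the sketch deserve correction. First, the boundary condition you impose on the vector potential is wrong for this branch of the decomposition: you solve $-\Delta w=\Curl\tilde u$, $\di w=0$, $w\times n=0$, but $w$ must lie in $V^q_\sigma$, which carries the \emph{normal} condition $w\cdot n=0$, not the tangential one. This is visible from the paper's definition of $V_{har}$ (namely $v\cdot n=0$) and from the paper's own proof of Theorem~\ref{HelmholtzZerlegungThReg}, where the elliptic system used is $-\Delta w=\Curl v$, $\di w=0$ on $\partial\Omega$, $w\cdot n=0$ on $\partial\Omega$. (You may have been misled by the paper's inconsistent usage of $\Gamma_n$ in the definition of $V^q$; the harmonic spaces and the regularity proof are the correct guides.) Second, the uniqueness paragraph is too terse to be verified as written: from $H+\Curl W+\nabla Z=0$, taking divergence gives $\Delta Z=0$ and taking curl gives $\Curl\Curl W=0$, but one cannot conclude $\nabla Z=0$ or $\Curl W=0$ from ``the boundary traces built into $V^q_\sigma$ and $X^q_{har}$'' alone --- for instance, $H\in X^q_{har}$ has $H\times n=0$ but in general $H\cdot n\neq0$, so testing $H+\Curl W+\nabla Z=0$ against $\nabla\phi$ does not immediately decouple the pieces. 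The actual uniqueness argument in \cite{Kozono09} rests on duality between the two companion decompositions (the $V/X$ and $X/V$ variants) and on the finite-dimensional de Rham spaces; that structure needs to appear explicitly if you want the proof to stand on its own.
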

\begin{Rem}
If $L$ denotes the dimension of $V_{har}(\Omega,\cR^3)$, i.e.
 ${\rm dim} V_{har}(\Omega,\cR^3)=L$,
and $\{\phi_1,...,\phi_L\}$ is a basis of $V_{har}(\Omega,\cR^3)$, then 
it holds $V^{q}(\Omega,\cR^3)\subset W^{1,q}(\Omega,\cR^3)$ with the estimate
\begin{eqnarray}
\label{estimateKOZONOappendix}
\|v\|_{q}+\|\nabla v\|_{q}\le C(\|\Curl v\|_{q}+\|\di v\|_{q}+\sum_{i=1}^{L}|(v,\phi_i)|)\non
\end{eqnarray}
for all $v\in V^{q}(\Omega,\cR^3)$, where $C=C(\Omega,q)$ 
(\cite[Theorem 2.4(2)]{Kozono09}). The proof of the inequality 
(\ref{estimateKOZONOappendix}) with $\sum_{i=1}^{L}|(v,\phi_i)|$ 
replaced by $\|v\|_q$ is performed in \cite[Lemma 4.5]{Kozono09}
(for $q=2$ it can be found in \cite[Theorem VII.6.1]{Duvaut76}).
If we assume that the boundary $\partial\Omega$ has $L+1$ smooth 
connected  components $\Gamma_0, \Gamma_1, ...,
\Gamma_L$ such that $\Gamma_1, ...,\Gamma_L$ lie inside $\Gamma_0$ 
with $\Gamma_i\cap\Gamma_j=\emptyset$ for $i\not=j$ and 
\begin{eqnarray}
\partial\Omega=\cup_{i=0}^{L}\Gamma_i,\non
\end{eqnarray}
then it holds (\cite[Appendix A]{Kozono09}) 
\[{\rm dim} V_{har}(\Omega,\cR^3)=L.\]
\end{Rem}
If the function $v$ in (\ref{HelmholtzZerlegung}) is more regular, then the function $w$
can be chosen from a better space as the next theorem shows.
\begin{Theor}\label{HelmholtzZerlegungThReg}
Let $1<q<\infty$. Assume that decomposition (\ref{HelmholtzZerlegung}) holds. If, additionally
$v\in Z^q_{\Curl}(\Omega,{\mathbb R}^3)$,  then $w$ in (\ref{HelmholtzZerlegung}) 
can be chosen from 
$W^{3,q}(\Omega, {\mathbb R}^3)\cap V^q_{\sigma}(\Omega,{\mathbb R}^3)$ satisfying
the estimate
\begin{eqnarray}\label{EstimateHelmholtzZerlegung}
\|w\|_{3,q,\Omega}\le C(\|\Curl v\|_{1,q,\Omega}+\|v\|_q),
\end{eqnarray}
where $C$ is a constant depending on $\Omega$ and $q$.
\end{Theor}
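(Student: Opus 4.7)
The plan is to start from the decomposition provided by Theorem \ref{HelmholtzZerlegungTh} and then upgrade the regularity of the $w$-piece by exploiting the extra hypothesis $\Curl\Curl v\in L^q$. First I would apply Theorem \ref{HelmholtzZerlegungTh} to obtain $v=h+\Curl w+\nabla z$ with $w\in V^q_{\sigma}(\Omega,\mathbb{R}^3)$ satisfying the basic $L^q$-bound $\|h\|_q+\|w\|_{1,q,\Omega}+\|z\|_{1,q,\Omega}\le C\|v\|_q$. Because $\Curl h=0$ (by definition of $X^q_{har}$) and $\Curl\nabla z=0$, taking $\Curl$ of the decomposition yields $\Curl v=\Curl\Curl w$. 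Combined with $\di w=0$ this is the vector identity $-\Delta w=\Curl v$ in $\Omega$, and it is complemented by the boundary conditions $w\times n|_{\partial\Omega}=0$ and $\di w=0$ inherited from $w\in V^q_{\sigma}(\Omega,\mathbb{R}^3)$.

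The next step, which is the heart of the argument, is to upgrade $\Curl v$ to $W^{1,q}(\Omega,\mathbb{R}^3)$. By hypothesis $\Curl v\in L^q$, and one has $\di\Curl v=0$ automatically and $\Curl\Curl v\in L^q$ by $v\in Z^q_{\Curl}$. The natural tool is the embedding $V^q(\Omega,\mathbb{R}^3)\hookrightarrow W^{1,q}(\Omega,\mathbb{R}^3)$ of \cite[Theorem 2]{Kozono09}, but this requires a tangential-trace condition on $\Curl v$ that is not directly supplied by $v\times n=0$. My plan is to apply Theorem \ref{HelmholtzZerlegungTh} a second time to $\Curl v$ itself: since $\di\Curl v=0$, the uniqueness part forces the gradient piece to be harmonic and the harmonic piece lies in $X^q_{har}$ (a finite-dimensional space of $C^\infty$ fields), so all the non-trivial content of $\Curl v$ sits in the solenoidal Curl-part $\Curl w_1$, to which the Kozono embedding (or the variant mentioned in the Remark after Theorem \ref{HelmholtzZerlegungTh} in which the boundary-trace correction is replaced by $\|\cdot\|_q$) then applies, giving $\Curl v\in W^{1,q}$ with the bound $\|\Curl v\|_{1,q,\Omega}\le C(\|\Curl\Curl v\|_q+\|\Curl v\|_q)$.

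Once $\Curl v\in W^{1,q}(\Omega,\mathbb{R}^3)$ is established, the system $-\Delta w=\Curl v$ with the boundary conditions $w\times n|_{\partial\Omega}=0$, $\di w=0$ on the smooth domain $\Omega$ is a standard Stokes-type elliptic boundary value problem, and classical regularity gains two derivatives over the right-hand side, delivering $w\in W^{3,q}(\Omega,\mathbb{R}^3)$ together with
\begin{equation*}
\|w\|_{3,q,\Omega}\le C\bigl(\|\Curl v\|_{1,q,\Omega}+\|w\|_{1,q,\Omega}\bigr)\le C\bigl(\|\Curl v\|_{1,q,\Omega}+\|v\|_q\bigr),
\end{equation*}
which is exactly (\ref{EstimateHelmholtzZerlegung}). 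The uniqueness-up-to-harmonic-gradients part of Theorem \ref{HelmholtzZerlegungTh} ensures that the new, more regular $w$ is compatible with the original decomposition.

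The main obstacle I expect is the middle step: extracting a $W^{1,q}$-control on $\Curl v$ from just $\Curl v,\Curl\Curl v\in L^q$ on a domain with boundary. The Kozono embedding is stated with a vanishing tangential (or normal) trace, and $v\times n=0$ does not propagate to $\Curl v\times n=0$ in any obvious way; the workaround through a secondary Helmholtz decomposition, together with the boundary-trace-free variant of the Kozono estimate quoted in the Remark after Theorem \ref{HelmholtzZerlegungTh}, is what makes the scheme go through. After this one technical boost, the rest is a routine elliptic-regularity bootstrap that crucially relies on the $C^\infty$-smoothness of $\partial\Omega$.
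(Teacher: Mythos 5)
Your overall skeleton matches the paper's: both reduce to the boundary value problem $-\Delta w=\Curl v$ (using $\di w=0$) and then invoke Agmon--Douglis--Nirenberg elliptic regularity to gain two derivatives. You also correctly isolate the crux: one must show $\Curl v\in W^{1,q}(\Omega,\mathbb{R}^3)$ before that regularity step can be applied, and this requires a boundary trace condition on $\Curl v$ in order to use the Kozono embedding. The paper simply asserts ``Since $\Curl v\in W^{1,q}$'' without justification, so your instinct to fill this in is sound.

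However, your proposed fill does not close. First, the observation you dismiss is actually the right one, only you looked at the wrong trace: $v\times n|_{\partial\Omega}=0$ does \emph{not} give $\Curl v\times n=0$, but it \emph{does} give $\Curl v\cdot n=0$ (the normal component of $\Curl v$ involves only tangential derivatives of the tangential part of $v$, which vanishes). Since also $\di(\Curl v)=0$ and $\Curl(\Curl v)\in L^q$, the Kozono $W^{1,q}$-embedding applies directly to $\Curl v$ with the normal-trace condition, and the detour through a secondary Helmholtz--Weyl decomposition is unnecessary. Second, the detour itself is not watertight: decomposing $\Curl v=h_1+\Curl w_1+\nabla z_1$, the $\nabla z_1$ piece is merely a harmonic $W^{1,q}$-gradient, and being harmonic does not by itself place it in $W^{1,q}$ up to the boundary; and the ``boundary-trace-free variant'' of the Kozono estimate quoted in the Remark after Theorem~\ref{HelmholtzZerlegungTh} still requires membership in $V^q(\Omega)$, i.e.\ it only replaces the low-order projection term by $\|v\|_q$, not the boundary condition. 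So the secondary decomposition does not by itself deliver a trace condition on $\Curl w_1$, and the scheme stalls exactly where the direct observation $\Curl v\cdot n=0$ would have rescued it. A minor point: the boundary value problem for $w$ in the paper carries the conditions $\di w=0$ and $w\cdot n=0$ on $\partial\Omega$ (the normal trace, consistent with $w\in V^q_\sigma$ as constructed in Kozono--Yanagisawa), not $w\times n=0$ as you wrote.
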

\begin{proof} For $v\in L^q_{\Curl}(\Omega, {\mathbb R}^3)$ this result is proved in
 \cite{Kozono09a}.
For $v\in Z^q_{\Curl}(\Omega,{\mathbb R}^3)$ the proof runs the same lines. We repeat them.

As it is shown in \cite[Lemma 4.2(2)]{Kozono09}, we can choose the function 
$w\in V^q_{\sigma}(\Omega,{\mathbb R}^3)$ satisfying the equation
\begin{eqnarray}\label{EstimateHelmholtzEq1}
(\Curl w,\Curl \psi)_\Omega=(v,\Curl \psi)_\Omega, \ \ {\rm for}\ {\rm all}\ \psi\in 
V^{q^*}_{\sigma}(\Omega,{\mathbb R}^3)
\end{eqnarray}
with the estimate 
\begin{eqnarray}\label{EstimateHelmholtzEq2}
\|w\|_{1,q,\Omega}\le C\|v\|_q,
\end{eqnarray}
where $C$ depends only on $\Omega$ and $q$. Since $\di w=0$ in $\Omega$ and
$v\in Z^q_{\Curl}(\Omega,{\mathbb R}^3)$, it follows from (\ref{EstimateHelmholtzEq1})
that $-\Delta w=\Curl v$ in the sense of distributions, and we may regard $w$ as a weak
solution of the following boundary value problem
\begin{eqnarray}\label{EstimateHelmholtzEq3}
-\Delta w&=&\Curl v, \hspace{3ex} {\rm in}\ \Omega, \\
\di w&=&0,\hspace{8ex} {\rm on}\ \partial\Omega, \\
w\cdot n&=&0,\hspace{8ex} {\rm on}\ \partial\Omega.
\end{eqnarray}
Since $\Curl v\in W^{1,q}(\Omega, {\mathbb R}^3)$, it follows from 
\cite[Lemma 4.3(1)]{Kozono09} and the classical theory of Agmon, Douglas and
Nirenberg \cite{AgmonDouglisNirenberg}
 that the solution $w$ of the homogeneous boundary value problem
(\ref{EstimateHelmholtzEq3}) belongs to $W^{3,q}(\Omega, {\mathbb R}^3)$ and
the estimate 
\begin{eqnarray}\label{EstimateHelmholtzEq4}
\|w\|_{3,q,\Omega}\le C(\|\Curl v\|_{1,q,\Omega}+\|w\|_q),
\end{eqnarray}
is valid with the constant $C$ dependent of $\Omega$ and $q$. Due to 
(\ref{EstimateHelmholtzEq2}), the estimate (\ref{EstimateHelmholtzEq4}) implies
(\ref{EstimateHelmholtzZerlegung}). This completes the proof.
\end{proof}
Now we can state the main result of this section.
\begin{Theor}\label{HelmholtzZerlegungThMain}
Let $1<q<\infty$. Suppose that sequence $v_\eta$  is weakly compact in 
$Z^{q}_{\Curl}(\Omega,{\mathbb R}^3)$. Then there exist  
\begin{eqnarray}
&&
v\in Z^{q}_{\Curl}(\Omega,{\mathbb R}^3),\ \ v_0\in L^{q}(\Omega\times Y,{\mathbb R}^3)
 \  with \ \Curl_y v_0=0,\non\\
&&
 v_1\in L^{q}(\Omega, W^{2,q}_{per}(Y, {\mathbb R}^3))
 \  with \ \di_y v_1=0,\non
\end{eqnarray}
 such that
\begin{eqnarray}
&&v_\eta\rightharpoonup v \ \ in \ Z^{q}_{\Curl}(\Omega,{\mathbb R}^3),
\label{UnfoldingCurl1}\\[1ex]
&&{\cal T}_\eta(v_\eta)\rightharpoonup v_0\ \ in \ 
L^{q}(\Omega\times Y,{\mathbb R}^3),\label{UnfoldingCurl2}\\[1ex]
&&{\cal T}_\eta(\Curl v_\eta)\rightharpoonup \Curl v\ \ in \ 
L^{q}(\Omega, W^{1,q}_{per}(Y,{\mathbb R}^3)),\label{UnfoldingCurl3}\\[1ex]
&&{\cal T}_\eta(\Curl\Curl v_\eta)\rightharpoonup \Curl\Curl v+\Curl_y\Curl_y v_1 \ in \ 
L^{q}(\Omega\times Y,{\mathbb R}^3).\label{UnfoldingCurl4}
\end{eqnarray}
Moreover, $v(x)=\int_Yv_0(x,y)dy$.
\end{Theor}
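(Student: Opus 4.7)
The plan is to use the Helmholtz--Weyl decomposition of Theorem~\ref{HelmholtzZerlegungThReg} to replace each $v_\eta$ by a more regular vector potential $w_\eta \in W^{3,q}(\Omega, \mathbb{R}^3)$, so that all needed unfolding limits reduce to applications of Proposition~\ref{UnfoldGradientOrderM}. The weak convergence~\eqref{UnfoldingCurl1} follows from reflexivity of $Z^q_{\Curl}(\Omega,\mathbb{R}^3)$, and Proposition~\ref{UnfoldWeakConv}~(c) applied to the bounded sequence $v_\eta$ in $L^q(\Omega,\mathbb{R}^3)$ delivers, along a subsequence, $v_0 \in L^q(\Omega \times Y, \mathbb{R}^3)$ with ${\cal T}_\eta(v_\eta) \rightharpoonup v_0$ and the identity $v = \int_Y v_0(\cdot, y)\,dy$ (recall $|Y| = 1$).

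For the $\Curl$ and $\Curl\Curl$ convergences, I would first apply Theorem~\ref{HelmholtzZerlegungThReg} to each $v_\eta$ to write
\begin{equation*}
v_\eta = h_\eta + \Curl w_\eta + \nabla z_\eta,
\end{equation*}
with $h_\eta \in X^q_{har}(\Omega, \mathbb{R}^3)$, $z_\eta \in W^{1,q}(\Omega, \mathbb{R}^3)$, and $w_\eta \in W^{3,q}(\Omega, \mathbb{R}^3) \cap V^q_\sigma(\Omega, \mathbb{R}^3)$ satisfying the bound $\|w_\eta\|_{3,q,\Omega} \le C(\|\Curl v_\eta\|_{1,q,\Omega} + \|v_\eta\|_q)$. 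Since $\Curl h_\eta = 0$ and $\Curl \nabla z_\eta = 0$, one has $\Curl v_\eta = \Curl\Curl w_\eta$ and $\Curl\Curl v_\eta = \Curl\Curl\Curl w_\eta$. A further subsequence then gives $w_\eta \rightharpoonup w$ in $W^{3,q}$; the uniqueness part of the Helmholtz--Weyl decomposition identifies $v = h + \Curl w + \nabla z$ and hence $\Curl v = \Curl\Curl w$ and $\Curl\Curl v = \Curl\Curl\Curl w$.

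Now Proposition~\ref{UnfoldGradientOrderM} with $m = 3$ produces, up to subsequence, $\hat w \in L^q(\Omega, W^{3,q}_{per}(Y, \mathbb{R}^3))$ with ${\cal T}_\eta(D^l w_\eta) \rightharpoonup D^l w$ in $L^q(\Omega, W^{3-l,q}(Y, \mathbb{R}^3))$ for $|l| \le 2$, and ${\cal T}_\eta(D^l w_\eta) \rightharpoonup D^l w + D^l_y \hat w$ in $L^q(\Omega \times Y, \mathbb{R}^3)$ for $|l| = 3$. Applying this to the second-order operator $\Curl\Curl$ yields~\eqref{UnfoldingCurl3}, and applying it to $\Curl\Curl\Curl$ together with the definition
\begin{equation*}
v_1 := \Curl_y \hat w \in L^q(\Omega, W^{2,q}_{per}(Y, \mathbb{R}^3)), \qquad \di_y v_1 = \di_y \Curl_y \hat w = 0,
\end{equation*}
gives $\Curl_y \Curl_y \Curl_y \hat w = \Curl_y \Curl_y v_1$, which is exactly the corrector term appearing in~\eqref{UnfoldingCurl4}.

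The main obstacle is securing the uniform $W^{3,q}$-bound on $w_\eta$, i.e., the uniform control of $\|\Curl v_\eta\|_{1,q,\Omega}$ (not just of $\|\Curl v_\eta\|_q$ and $\|\Curl\Curl v_\eta\|_q$, which are direct consequences of $v_\eta$ being bounded in $Z^q_{\Curl}$). This in turn rests on the embedding $V^q(\Omega,\mathbb{R}^3) \hookrightarrow W^{1,q}(\Omega,\mathbb{R}^3)$ applied to $\Curl v_\eta$, exploiting $\di(\Curl v_\eta) = 0$, $\Curl(\Curl v_\eta) \in L^q$ uniformly, and the tangential trace of $\Curl v_\eta$ inherited from $v_\eta \in L^q_{\Curl,0}(\Omega, \mathbb{R}^3)$. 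Once this bound is in hand, the remaining steps are routine applications of the already-proved unfolding machinery together with uniqueness of the Helmholtz--Weyl decomposition.
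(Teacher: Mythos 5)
Your proof is correct and follows the same strategy as the paper: Helmholtz--Weyl decomposition of $v_\eta$ via Theorems~\ref{HelmholtzZerlegungTh} and~\ref{HelmholtzZerlegungThReg}, a uniform $W^{3,q}$-bound on the vector potential $w_\eta$, and unfolding of higher-order derivatives via Proposition~\ref{UnfoldGradientOrderM} with $m=3$. You in fact tighten the paper's exposition in three places: you write the correct relation $\Curl v_\eta = \Curl\Curl w_\eta$ (the paper's proof states $\Curl v_\eta = \Curl w_\eta$, a typo which, taken literally, would give no corrector term in~\eqref{UnfoldingCurl4}); you make explicit the identification $v_1 := \Curl_y \hat w$, which the paper leaves implicit; and you flag and fill the step needed to pass from the $Z^q_{\Curl}$-bound on $v_\eta$ to the uniform control of $\|\Curl v_\eta\|_{1,q,\Omega}$ demanded by estimate~\eqref{EstimateHelmholtzZerlegung}. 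One small correction to that last point: the boundary condition $\Curl v_\eta$ inherits from $v_\eta\times n=0$ is the vanishing \emph{normal} trace $(\Curl v_\eta)\cdot n=0$ (by the surface-divergence identity), not the tangential trace, and it is this condition, combined with $\di(\Curl v_\eta)=0$ and $\Curl\Curl v_\eta\in L^q$, that places $\Curl v_\eta$ in the Kozono--Yanagisawa space embedding into $W^{1,q}(\Omega,\mathbb{R}^3)$.
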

\begin{proof} Convergence (\ref{UnfoldingCurl2}) and the last statement of the theorem
follow from Proposition~\ref{UnfoldWeakConv}(c). Convergence (\ref{UnfoldingCurl1}) is
obvious.  Next, we prove convergences (\ref{UnfoldingCurl3}) and (\ref{UnfoldingCurl4}). 
According to Theorem~\ref{HelmholtzZerlegungTh}, there exist 
$h_\eta\in X^q_{har}(\Omega,\cR^3)$, $w_\eta\in V^q_{\sigma}(\Omega,\cR^3)$ and 
$z_\eta\in W^{1,q}(\Omega,\cR^3)$ satisfying the inequality
\begin{eqnarray}\label{EstimateUnfoldingCurl1}
\|h_\eta\|_{q}+\|w_\eta\|_{1,q,\Omega}+\|z_\eta\|_{1,q,\Omega}\le C\|v_\eta\|_q
\end{eqnarray}
with the constant $C$ independent of $\eta$, and such that
\begin{eqnarray}\label{EstimateUnfoldingCurl2}
v_\eta=h_\eta+\Curl w_\eta+\na z_\eta.
\end{eqnarray}
Moreover, due to Theorem~\ref{HelmholtzZerlegungThReg}, $w_\eta$ in 
(\ref{EstimateUnfoldingCurl2}) enjoys the inequality
\begin{eqnarray}\label{EstimateUnfoldingCurl3}
\|w_\eta\|_{3,q,\Omega}\le C(\|\Curl v_\eta\|_{1,q,\Omega}+\|v_\eta\|_q)
\end{eqnarray}
with the constant $C$ independent  of $\eta$. Therefore,
the weak compactness
of $v_\eta$ in  $Z^{q}_{\Curl}(\Omega,{\mathbb R}^3)$ and 
(\ref{EstimateUnfoldingCurl3}) imply that $w_\eta$ is weakly compact in 
$W^{3,q}(\Omega,\cR^3)$. Thus, in virtue of Proposition~\ref{UnfoldGradientOrderM}
we conclude that there exist 
\[w\in W^{3,q}(\Omega,\cR^3) \ \ {\rm and}\ \ w_1\in L^q(\Omega,W^{3,q}_{per}(Y,\cR^3))\]
such that
\begin{eqnarray}
&&{\cal T}_\eta(D^l w_\eta)\rightharpoonup D^l{w} \ \ {\rm in} 
\ L^q(\Omega,W^{3-l,q}(Y,{\mathbb R}^k))\ {\rm for} \ |l|\le 2, \label{InequalityNeededLater1}\\
&&{\cal T}_\eta(D^l w_\eta)\rightharpoonup D^l{w}+D^l_y w_1 \ \ {\rm in} 
\ L^q(\Omega\times Y,{\mathbb R}^k)\ {\rm for}  \ |l|=3.\label{InequalityNeededLater2}
\end{eqnarray}
Since $\Curl v_\eta=\Curl\Curl w_\eta$ and $\Curl v=\Curl\Curl w$, we get that
\begin{eqnarray}
&&{\cal T}_\eta(\Curl v_\eta)\rightharpoonup \Curl v\ \ {\rm in} \ 
L^{q}(\Omega, W^{1,q}_{per}(Y,{\mathbb R}^3)),\non\\[1ex]
&&{\cal T}_\eta(\Curl\Curl v_\eta)\rightharpoonup \Curl\Curl v+\Curl_y\Curl_y v_1\ \ {\rm in} \ 
L^{q}(\Omega\times Y,{\mathbb R}^3).\non
\end{eqnarray}
It is left to prove that the condition $\Curl_y v_0=0$ is valid\footnote{The proof of this result is due to an unknown reviewer of the manuscript.}. To this end, we note first that the additive decomposition
(\ref{EstimateUnfoldingCurl2}) implies
\begin{eqnarray}\label{EstimateUnfoldingCurl3}
{\cal T}_\eta(v_\eta)={\cal T}_\eta(h_\eta)+{\cal T}_\eta(\Curl w_\eta)+{\cal T}_\eta(\na z_\eta).
\end{eqnarray}
Since the function $h_\eta$ belongs to the space of smooth functions $X^q_{har}(\Omega,\cR^3)$,
up to a subsequence, the sequence $h_\eta$ converges strongly to 
a function $h$ in $L^q(\Omega, \mathbb{R}^3)$. This provides that 
$${\cal T}_\eta(h_\eta)\to h\ \ {\rm in} \ L^{q}(\Omega\times Y,{\mathbb R}^3).$$
Next, the weak compactness of $w_\eta$ in $W^{3,q}(\Omega,\cR^3)$ together with 
the convergence (\ref{InequalityNeededLater1}) and Rellich's theorem
guarantee  that
$${\cal T}_\eta(\Curl w_\eta)\to \Curl w\ \ {\rm in} \ L^{q}(\Omega\times Y,{\mathbb R}^3).$$
Proposition~\ref{UnfoldGradientOrderM} applied to the gradient of $z_\eta$ implies that there exist
functions $z\in W^{1,q}(\Omega,\cR^3)$ and $z_1\in L^q(\Omega,W^{1,q}_{per}(Y,\cR^3))$
such that
$${\cal T}_\eta(\na z_\eta)\rightharpoonup \na{z}+\na_yz_1 \ \ {\rm in}\ 
L^{q}(\Omega\times Y,{\cal M}^3).$$
Passage to the weak limit in (\ref{EstimateUnfoldingCurl3}) yields now that
$$v_0=h+\Curl w+\na z+\na_y z_1,$$
where on the right hand side the function $z_1$ depends on the variable $y$ only. 
Therefore, we get that
$$\Curl_yv_0=\Curl_y\na_y z_1=0.$$
The proof of Theorem~\ref{HelmholtzZerlegungThMain} is complete.
\end{proof}




\section{Homogenized system of equations}\label{Homogenization}

\paragraph{Main result.} First, we define a class of maximal monotone functions we deal with
in this work.
\begin{Def}\label{CoercClass}
For $m\in L^1(\Omega, \cR)$, $\alpha\in{\mathbb R}_{+}$ and $q>1$, 
${\mathbb M}(\Omega,{\mathbb R}^k,q,\alpha,m)$ is the set of 
multi-valued functions $h:\Omega \times {\mathbb R}^k\rightarrow 2^{\cR^k}$
with the following properties
\begin{itemize}
    \item $v \mapsto h(x, v)$ is maximal monotone for almost all $x \in \Omega$,
    \item the mapping $x \mapsto j_{\lambda}(x, v) : \Omega \rightarrow \cR^k$ is
measurable for all $\lambda > 0$, where $j_{\lambda}(x, v)$ is
     the inverse of $v \mapsto v + \lambda h(x, v)$,
    \item for a.e. $x\in \Omega$ and every $v^*\in h(x,v)$
     \begin{eqnarray}
\label{inequMain} \alpha\left(\frac{|v|^q}{q}+\frac{|v^*|^{q^*}}{q^*}\right)
\le (v,v^*)+m(x),
\end{eqnarray}
where $1/q+1/{q^*}=1$.
\end{itemize}
\end{Def}
\begin{Rem}{\rm
We note that the condition \eq{inequMain} is equivalent to the following
two inequalities
\begin{eqnarray}
\label{inequMain1} &&|v^*|^{q^*}\le m_1(x)+\alpha_1|v|^q,\\
&& (v,v^*)\ge m_2(x)+\alpha_2|v|^q,\label{inequMain2}
\end{eqnarray}
for a.e. $x\in \Omega$ and every $v^*\in h(x,v)$ and with suitable functions
$m_1,m_2\in L^1(\Omega, \cR)$ and numbers 
$\alpha_1,\alpha_2\in{\mathbb R}_{+}$.} 
\end{Rem}
\begin{Rem} Visco-plasticity is typically included in the former conditions by choosing the function $g$ to be in Norton-Hoff form, i.e. 
\begin{align*}
  g(\Sigma)=[|{\Sigma}|-\yieldlimit]_+^r\,\frac{\Sigma}{|\Sigma|}\, , \quad\Sigma\in{\cal M}^3\, ,
\end{align*}  
where $\yieldlimit$ is the flow stress and $r$ is some parameter together with $[x]_+:=\max(x,0)$. If $g:{\cal M}^3\mapsto {\cal S}^3$ then the flow is called irrotational (no plastic spin).
 \end{Rem}
The main properties of the class ${\mathbb M}(\Omega,{\mathbb R}^k,q,\alpha,m)$ are
collected in the following proposition.
\begin{Prop}\label{MainClassMaxMonoProp}
Let ${\cal H}$ be a canonical extension of a function $h:{\mathbb R}^k\to 2^{{\mathbb R}^k}$,
which belongs to
    ${\mathbb M}(\Omega,{\mathbb R}^k,q,\alpha,m)$. Then ${\cal H}$
  is maximal monotone, surjective and $D({\cal H})=L^p(\Omega,{\mathbb R}^k)$.
     \end{Prop}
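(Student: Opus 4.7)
The plan proceeds in three steps, one for each assertion---maximal monotonicity, identification of the domain, and surjectivity---and leverages the two-sided bound \eqref{inequMain}, or equivalently \eqref{inequMain1}--\eqref{inequMain2}, together with the general theory of Section~\ref{MonoOpers}.

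First, for maximal monotonicity of $\mathcal{H}$ I would invoke the Proposition on canonical extensions, which reduces the task to showing that $h$ is measurable as a map $\Omega\to\mathfrak{M}(\mathbb{R}^k\times\mathbb{R}^k)$ and that $\mathrm{Gr}\,\mathcal{H}$ is nonempty. Measurability is built into the definition of $\mathbb{M}(\Omega,\mathbb{R}^k,q,\alpha,m)$ via measurability of the resolvents $j_\lambda(\cdot,v)$, combined with the equivalence Proposition on measurability criteria. Nonemptiness of the graph follows by taking $v\equiv 0$ and selecting a measurable $v^*(x)\in h(x,0)$ through a Kuratowski--Ryll-Nardzewski selection; specializing \eqref{inequMain} to $v=0$ gives $|v^*(x)|^{q^*}\le q^*m(x)/\alpha\in L^1(\Omega)$, hence $v^*\in L^{q^*}(\Omega,\mathbb{R}^k)$ and $[0,v^*]\in\mathrm{Gr}\,\mathcal{H}$.

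Next, to identify the domain $D(\mathcal{H})$ with $L^q(\Omega,\mathbb{R}^k)$ (the parameter $p$ in the statement coinciding with the class exponent $q$), the inclusion $D(\mathcal{H})\subseteq L^q(\Omega,\mathbb{R}^k)$ is immediate from Definition~\ref{CanExtension}. For the reverse, fix any $v\in L^q(\Omega,\mathbb{R}^k)$. The multifunction $x\mapsto h(x,v(x))$ is measurable since the Yosida approximations $(x,w)\mapsto h_\lambda(x,w)$ are Carath\'eodory and approximate $h$ in the graph-convergence sense of Theorem~\ref{convMaxMonGrEquiv}. A measurable selection yields $v^*(x)\in h(x,v(x))$, and the growth bound \eqref{inequMain1} integrates to
\begin{equation*}
\int_\Omega |v^*(x)|^{q^*}\,dx\le \int_\Omega m_1(x)\,dx+\alpha_1\|v\|_q^q<\infty,
\end{equation*}
so $v^*\in L^{q^*}(\Omega,\mathbb{R}^k)$ and $v\in D(\mathcal{H})$.

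Finally, surjectivity of $\mathcal{H}\colon L^q(\Omega,\mathbb{R}^k)\to 2^{L^{q^*}(\Omega,\mathbb{R}^k)}$ follows from the classical Minty--Browder theorem once strong coercivity is verified on the reflexive Banach space $L^q(\Omega,\mathbb{R}^k)$. For any $[v,v^*]\in\mathrm{Gr}\,\mathcal{H}$, integrating the pointwise lower bound \eqref{inequMain2} gives
\begin{equation*}
\langle v^*,v\rangle=\int_\Omega v(x)\cdot v^*(x)\,dx\ge \alpha_2\|v\|_q^q+\int_\Omega m_2(x)\,dx,
\end{equation*}
so $\langle v^*,v\rangle/\|v\|_q\to+\infty$ as $\|v\|_q\to\infty$, which is precisely the strong coercivity property recalled in Section~\ref{MonoOpers}. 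The main obstacle is the measurable selection in the domain step: it is what upgrades pointwise solvability of $v^*\in h(x,v(x))$ to a Bochner-measurable selection lying in $L^{q^*}$, so that the pair $[v,v^*]$ actually belongs to $\mathrm{Gr}\,\mathcal{H}$; the rest is a textbook application of the Minty--Browder framework, with the two-sided bound \eqref{inequMain} supplying both the growth and the coercivity.
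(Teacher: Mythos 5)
The paper's own ``proof'' of Proposition~\ref{MainClassMaxMonoProp} is simply a citation to Corollary~2.15 of the Damlamian--Meunier--Van Schaftingen paper, so you have in effect reconstructed an argument that the authors outsource. Your overall architecture---measurability plus nonempty graph for maximal monotonicity, growth bound \eqref{inequMain1} for the domain, coercivity from \eqref{inequMain2} plus Browder--Minty for surjectivity---is the right one and lines up with how the cited reference proceeds.

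There is, however, a genuine gap that recurs in two places. Both the nonemptiness of $\operatorname{Gr}\mathcal{H}$ via the choice $v\equiv 0$ and the measurable-selection step for a general $v\in L^q$ tacitly use that $D(h(x,\cdot))=\mathbb{R}^k$ for a.e.\ $x$, i.e.\ that the pointwise operator has full domain. This is not built into Definition~\ref{CoercClass}: the inequality \eqref{inequMain} is only quantified over pairs $[v,v^*]$ already lying in $\operatorname{Gr}h(x,\cdot)$, so it says nothing about where the graph is nonempty, and maximal monotone operators on $\mathbb{R}^k$ can certainly have proper domains. You need a preliminary lemma to the effect that a maximal monotone operator on $\mathbb{R}^k$ satisfying a two-sided bound of type \eqref{inequMain} is everywhere defined. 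One clean way to get it is to observe that \eqref{inequMain} is symmetric in $v$ and $v^*$: the inverse graph $h(x,\cdot)^{-1}$ is also maximal monotone and coercive, hence surjective by Browder--Minty, and $R(h(x,\cdot)^{-1})=D(h(x,\cdot))=\mathbb{R}^k$. Alternatively one can invoke Rockafellar's local boundedness theorem: the growth bound \eqref{inequMain1} makes $h(x,\cdot)$ locally bounded at every point, which forces every point to be interior to $D(h(x,\cdot))$, so the domain is open and closed and hence all of $\mathbb{R}^k$. Once this is in place, your Kuratowski--Ryll-Nardzewski selection argument and the integration of \eqref{inequMain1}--\eqref{inequMain2} go through; I would also note that you verify coercivity only with $w=0$, whereas the paper's definition of strong coercivity quantifies over all $w\in D(A)$, but this extends immediately using the growth estimate $\|v^*\|_{q^*}\le C(1+\|v\|_q^{q-1})$, and in any case the $w=0$ version already suffices for the surjectivity theorem you invoke.
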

\begin{proof}
 See Corollary 2.15  in \cite{Damlamian07}. 
\end{proof}
In linear elasticity theory it is well known 
(see \cite[Theorem 4.2]{Vale88}) 
that a Dirichlet boundary value problem formed by the equations
\begin{eqnarray}
\label{elast1} -\di_x \sigma_\eta(x) &=& \hat{b}(x), \hskip4.8cm \ x \in \Omega,\\
\label{elast2} \sigma_\eta(x) &=&  {\mathbb C}[x/\eta]( \sym
 {({ \nabla _x u_\eta(x)})}-{\hat\varepsilon}_\eta(x)),  \quad  \quad x \in \Omega,\\
 \label{elast3} u_\eta(x) &=&0, \hskip5.2cm  \ x \in {
\partial \Omega},
\end{eqnarray}
to given $\hat{b} \in H^{-1}( \Omega, {\mathbb R}^3)$ and
${\hat \varepsilon}_\eta \in L^2( \Omega, {\cal S}^3)$ has a unique
weak solution $(u_\eta, \sigma_\eta) \in  H^{1}_0( \Omega, {\mathbb R}^3)
 \times L^2 ( \Omega, {\cal S}^3)$.
 
Next, we define
the notion of strong solutions for 
the initial boundary value problem (\ref{CurlPr1}) - (\ref{CurlPr6}).
\begin{Def}\label{StrongSol}{\bf (Strong solutions)}
 A function $(u_\eta,\sigma_\eta,p_\eta)$ such that
\[(u_\eta,\sigma_\eta)\in H^{1}(0,T_e; H^{1}_0(\Omega, {\mathbb R}^3) \times
 L^{2} ({\Omega}, {\cal S}^3)),\ \ 
\Sigma^{\rm lin}_\eta\in L^{q}(\Omega_{T_e}, {\cal M}^3),\] \[ p_\eta\in   
H^{1}(0,T_e;L^{2}_{\Curl}(\Omega,{\cal M}^3))\cap 
L^{2}(0,{T_e};Z^{2}_{\Curl}(\Omega,{\cal M}^3))\]
is called a {\rm strong solution} of the initial boundary value
problem (\ref{CurlPr1}) - (\ref{CurlPr6}), if for every $t\in [0,T_e]$
the function $(u_\eta(t),\sigma_\eta(t))$ is a weak solution of the boundary value problem
 (\ref{elast1}) - (\ref{elast3}) with ${\hat\varepsilon}_p=\sym p_\eta(t)$ and
$\hat{b}=b(t)$, the evolution inclusion (\ref{CurlPr3}) and the initial condition (\ref{CurlPr4}) are satisfied pointwise.
\end{Def}
Next, we state the existence result (see \cite{NesenenkoNeff2012}).
\begin{Theor}\label{existMain} 
Suppose that $1<q^*\le 2\le q<\infty$. Assume that $\Omega \subset \cR^3$
 is a sliceable domain with a $C^2$-boundary,  $C_1\in L^\infty(\Omega, \mathbb{R})$ and
 ${\mathbb C}\in L^\infty(\Omega,{\cal S}^3)$ satisfying (\ref{AssOnC1}) and
(\ref{AssOnC}), respectively. Let the functions  
$b \in W^{1,q}(0,T_e; L^{q}(\Omega, {\mathbb R}^3))$ be given and
 $g\in{\mathbb M}(\Omega, {\cal M}^3, q, \alpha, m)$.
 Suppose that for a.e. $x\in\Omega$ the relation
 \begin{eqnarray}
\label{Assumption1}  0\in g(x/\eta, \sigma^{(0)}(x))
\end{eqnarray}
holds, where the function 
$\sigma^{(0)}\in L^2({\Omega}, {\cal S}^3)$ is  determined by equations
(\ref{elast1}) - (\ref{elast3}) for ${\hat\varepsilon}_p=0$ and $\hat{b}=b(0)$. 
 Then there exists a strong unique solution $(u_\eta,\sigma_\eta,p_\eta)$
of the initial boundary value problem (\ref{CurlPr1}) - (\ref{CurlPr6}).
\end{Theor}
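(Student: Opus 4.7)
The plan is to combine Rothe's time-discretization method with the theory of maximal monotone operators, relying on the generalized Korn inequality~\eqref{incompatible_korn} for incompatible tensor fields to obtain coercivity and a priori estimates on the non-symmetric plastic distortion $p$.

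\medskip

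I partition $[0,T_e]$ into $N$ equal intervals of length $\tau = T_e/N$ and, starting from $p^0_\tau := p^{(0)} = 0$, solve at each step the implicit stationary problem: find $(u^k_\tau, \sigma^k_\tau, p^k_\tau)$ satisfying $-\di \sigma^k_\tau = b(k\tau)$, $\sigma^k_\tau = \mathbb{C}[\cdot/\eta]\sym(\na u^k_\tau - p^k_\tau)$ together with
\[
\frac{p^k_\tau - p^{k-1}_\tau}{\tau} \in g\!\left(\tfrac{\cdot}{\eta},\; \sigma^k_\tau - C_1[\tfrac{\cdot}{\eta}]\dev\sym p^k_\tau - C_2\Curl\Curl p^k_\tau\right),
\]
under homogeneous Dirichlet data for $u^k_\tau$ and the micro-hard condition $p^k_\tau \times n|_{\partial\Omega} = 0$. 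Since the elasticity subproblem is uniquely solvable, $u^k_\tau$ and $\sigma^k_\tau$ reduce to affine continuous functions of $p^k_\tau$, so the step collapses to an inclusion $\mathcal{A}_\tau p^k_\tau \ni f^k_\tau$ in $Z^2_{\Curl}(\Omega,\mathfrak{sl}(3))^*$. The operator $\mathcal{A}_\tau$ is the sum of the canonical extension of $g(\cdot/\eta,\cdot)$ (maximal monotone and surjective by Proposition~\ref{MainClassMaxMonoProp}) composed with the linear argument map, the identity $p/\tau$ from the discrete derivative, and a monotone linear correction built from $\mathbb{C}$, $C_1$ and $\Curl\Curl$. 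Preservation of maximal monotonicity under this sum, together with coercivity in the full $Z^2_{\Curl}$-norm supplied by Korn's inequality~\eqref{incompatible_korn}, yields existence of $p^k_\tau$ via surjectivity.

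\medskip

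Let $\bar p_\tau,\tilde p_\tau$ denote the piecewise-constant and piecewise-affine interpolants of $\{p^k_\tau\}$, and similarly for $u_\tau,\sigma_\tau$. Testing the discrete inclusion against $(p^k_\tau - p^{k-1}_\tau)/\tau$ and summing in $k$, I integrate the $\Curl\Curl p$-term by parts (self-adjointness of $\Curl$ under the micro-hard boundary condition), producing the telescoped endpoint energies $\tfrac{C_2}{2}\|\Curl p(\cdot)\|_{L^2}^2$, $\tfrac{1}{2}\!\int C_1|\dev\sym p(\cdot)|^2$ and the elastic energy. Combined with the coercivity~\eqref{inequMain2} of $g$ and the compatibility hypothesis~\eqref{Assumption1} (making the $t=0$ contribution vanish), this yields uniform bounds on $\Sigma^{\rm lin}_\tau$ in $L^q(\Omega_{T_e})$ and on $\sym \bar p_\tau,\Curl \bar p_\tau$ in $L^\infty(0,T_e;L^2)$. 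Korn's inequality~\eqref{incompatible_korn} upgrades these to an $L^\infty(0,T_e;L^2)$ bound on $\bar p_\tau$ itself; the algebraic identity $C_2\Curl\Curl p^k_\tau = \sigma^k_\tau - C_1\dev\sym p^k_\tau - \Sigma^{\rm lin}_\tau$ with $q \ge 2$ then transfers the stress bound into an $L^2(\Omega_{T_e})$ bound on $\Curl\Curl \bar p_\tau$. The $L^2$-in-time regularity of $\partial_t \tilde p_\tau$ required by Definition~\ref{StrongSol} is recovered by differencing two consecutive discrete inclusions and pairing with $\delta^k - \delta^{k-1}$, where $\delta^k := (p^k_\tau - p^{k-1}_\tau)/\tau$; monotonicity of $g$, self-adjointness of $\Curl$ and the starting value $\delta^0 = 0$ (again crucially from~\eqref{Assumption1}) deliver telescoping estimates on $\delta^k$.

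\medskip

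Weak compactness extracts a subsequence (not relabeled) with $\tilde p_\tau \rightharpoonup p_\eta$ in the spaces of Definition~\ref{StrongSol} and corresponding weak limits for $u_\tau,\sigma_\tau$; the linear elasticity system passes immediately. The \textbf{main obstacle} is the inclusion, which requires a Minty-type identification of the limit graph. Applying Theorem~\ref{convMaxMonGraph} to the canonical extension of $g(\cdot/\eta,\cdot)$ (with $A^n=A$ fixed since $\eta$ does not vary here), the only nontrivial hypothesis is
\[
\limsup_{\tau \to 0}\int_{\Omega_{T_e}} \Sigma^{\rm lin}_\tau \cdot \partial_t \tilde p_\tau \,dx\,dt \;\le\; \int_{\Omega_{T_e}} \Sigma^{\rm lin}_\eta \cdot \partial_t p_\eta \,dx\,dt.
\]
This is obtained from the discrete energy identity by taking the linear right-hand side $\int b\cdot\partial_t u_\tau$ to its strong limit (using $b\in W^{1,q}(0,T_e;L^q)$) and applying weak lower semicontinuity to the elastic, kinematic-hardening and dislocation-storage endpoint energies at $t=T_e$. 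Minty's identification then yields $\partial_t p_\eta(x,t) \in g(x/\eta,\Sigma^{\rm lin}_\eta(x,t))$ a.e., finishing the proof of existence.
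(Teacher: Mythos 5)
Your proposal reproduces the essential structure of the paper's existence argument (sketched in the proof of Lemma~\ref{existLemma} and carried out in detail in \cite{NesenenkoNeff2012}): Rothe time-discretization, solvability of the discrete problems via maximal monotone operator theory with coercivity supplied by the generalized Korn inequality~\eqref{incompatible_korn}, a priori estimates from testing the discrete inclusion against difference quotients and telescoping the quadratic energies, additional $H^1$-in-time regularity from differencing consecutive inclusions and using $\delta^0=0$ from the compatibility hypothesis~\eqref{Assumption1}, weak compactness, and a Minty/graph-convergence limit passage (Theorem~\ref{convMaxMonGraph} with $A^n=A$, closed by weak lower semicontinuity of the endpoint energies).

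One technical detail you omit: the paper's discrete scheme inserts a vanishing regularization $-\tfrac{1}{m}\,p^n_{\eta,m}$ into the discrete Eshelby stress $\Sigma^{\rm lin}_{n,m}$, so the discrete flow rule is perturbed and the perturbation is sent to zero together with the time step $h=T_e/2^m$. This term makes the linear part of the discrete operator strictly coercive in $L^2(\Omega,{\cal M}^3)$ independently of Korn, which considerably eases the verification that the sum of the linear part and the canonical extension of $g^{-1}$ is maximal monotone and surjective at each step, and it also appears as a helpful dissipative term in the incremental-ratio estimate for the additional regularity. Your version relies solely on Korn for the coercivity of the discrete operator; this is plausible, but you would still need to verify a Rockafellar-type interiority/domain condition to conclude maximality of the sum at each time step, which the paper's regularization essentially sidesteps. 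This is a simplification you should either reinstate or replace with an explicit maximality argument, but it is not a change of strategy.
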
 
Now we can formulate the main result of this work.
\begin{Theor}\label{HomoMain} 
Suppose that all assumptions of Theorem~\ref{existMain} are fulfilled.  Then there exists
\begin{eqnarray}
&&u_0\in  H^{1}(0,T_e;H^{1}_0(\Omega,{\mathbb R}^3)),\ \ 
u_1\in  H^{1}(0,T_e;L^{2}(\Omega, H^{1}_{per}(Y, {\mathbb R}^3))),\non\\[1ex]
&&\sigma_0\in  L^{\infty}(0,T_e;L^2(\Omega\times Y,{\cal S}^3)),\ \
\sigma\in  L^{\infty}(0,T_e;L^2(\Omega,{\cal S}^3)), \non\\[1ex]
&&
p\in H^{1}(0,T_e;L^{2}(\Omega,{\cal M}^3))\cap 
L^{2}(0,{T_e};Z^{2}_{\Curl}(\Omega,{\cal M}^3)),\non\\[1ex]
 && p_0\in H^{1}(0,T_e;L^{2}(\Omega\times Y, {\cal M}^3))
  \  with \ \Curl_y p_0=0,\non
\end{eqnarray}
and $$p_1\in L^{2}(0,T_e;L^{2}(\Omega, W^{2,q^*}_{per}(Y, {\cal M}^3)))
 \  with \ \di_y p_1=0,$$
 such that
\begin{eqnarray}
&&u_\eta\rightharpoonup u_0 \ \ in \ H^{1}(0,T_e;H^{1}_0(\Omega,\cR^3)),
\label{HomEq1}\\[1ex]
&&p_\eta\rightharpoonup p \ \ in \ H^{1}(0,T_e;L^{2}(\Omega,{\cal M}^3))\cap 
L^{2}(0,{T_e};Z^{2}_{\Curl}(\Omega,{\cal M}^3)),
\label{HomEq11}\\[1ex]
&&{\cal T}_\eta(\na u_\eta)\rightharpoonup \na{u_0}+\na_yu_1 \ \ in 
\ H^{1}(0,T_e;L^{2}(\Omega\times Y,\cR^3)),\label{HomEq2}\\[1ex]
&&\sigma_\eta\overset{*}{\rightharpoonup} \sigma \ \ in \ 
L^{\infty}(0,T_e;L^2(\Omega,{\cal S}^3)),\label{HomEq3a}\\[1ex]
&&{\cal T}_\eta(\sigma_\eta)\overset{*}{\rightharpoonup} \sigma_0 \ \ in \ 
L^{\infty}(0,T_e;L^2(\Omega\times Y,{\cal S}^3)),\label{HomEq3}\\[1ex]
&&{\cal T}_\eta(p_\eta)\rightharpoonup p_0\ \ in \ 
L^{2}(0,T_e;L^{2}(\Omega\times Y,{\cal M}^3)),\label{HomEq4}\\[1ex]
&&{\cal T}_\eta(\partial_t p_\eta)\rightharpoonup \partial_t p_0 \ \ in \ 
L^{2}(0,T_e;L^{2}(\Omega\times Y,{\cal M}^3)),\label{HomEq41}
\end{eqnarray}
and
\begin{eqnarray}
&&{\cal T}_\eta(\Curl p_\eta)\rightharpoonup \Curl p\ \ in \ 
L^{2}(0,T_e;L^{2}(\Omega, H^1_{per}(Y,{\cal M}^3))),\label{HomEq42}\\[1ex]
&&{\cal T}_\eta(\dev\sym p_\eta)\rightharpoonup \dev\sym p_0\ \ in \ 
L^{2}(\Omega_{T_e}\times Y,{\cal M}^3),\label{HomEq43}\\[1ex]
&&{\cal T}_\eta(\Curl\Curl p_\eta)\rightharpoonup \tilde{p} \ \ in \ 
L^{2}(\Omega_{T_e}\times Y,{\cal M}^3),\label{HomEq44}\\[1ex]
&&{\cal T}_\eta(\Sigma^{\rm lin}_\eta)\rightharpoonup \Sigma^{\rm lin}_0 \ \ in \ 
L^{q}(\Omega_{T_e}\times Y,{\cal M}^3),\label{HomEq45}
\end{eqnarray}
where 
\begin{eqnarray}
&&\tilde{p}:=\Curl\Curl p+\Curl_y\Curl_y p_1,\non\\
&&\Sigma^{\rm lin}_0:=\sigma_0-C_1[y] \dev \sym p_0- C_2\tilde p,\non
\end{eqnarray}
and $(u_0,u_1,\sigma, \sigma_0,p, p_0, p_1)$ is a solution of the
following system of equations:
\begin{eqnarray}
\label{HomogEqua1}  -\di_x \sigma(x,t)  &=& b(x, t), \\
\label{HomogEqua2}  -{\rm div}_y \sigma_0(x, y, t) &=& 0, \\[1ex]
  \sigma_0(x, y, t) &=& {\mathbb C}[y] ( \sym (\na_x u_0(x,t)+\na_y u_1(x,y,t) - p_0(x,y,t))), \label{HomogEqua3}   \\[1ex]
\label{HomogEqua4}
\partial_t p_0(x,y,t)&\in& g(y, \Sigma^{\rm lin}_0(x,y,t)),
\end{eqnarray}
which holds for $(x, y, t) \in \Omega \times \cR^3 \times [0,T_e ]$, and the initial
condition and boundary condition 
\begin{eqnarray} 
p_0(x,y,0) &=& 0, \hspace{7ex} x \in \Omega,  \label{HomCurlPr4}   \\
p(x,t)\times n (x)&=&0,\hspace{7ex} (x,t) \in \partial\Omega \times
  [0,T_e), \label{HomCurlPr5}\\
u_0(x,t) &=& 0, \quad\quad\quad (x,t) \in \partial \Omega \times
  [0,T_e).\,,\label{HomCurlPr6}
\end{eqnarray}
The functions $\sigma$  and  
$p$  are related to
$\sigma_0$ and $p_0$ in the following ways
 \[\sigma(x, t)=\int_{Y}\sigma_0(x,y,t) dy, \ \ p(x, t)=\int_{Y}p_0(x,y,t) dy.\]
\end{Theor}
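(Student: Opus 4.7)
The plan is to derive Theorem~\ref{HomoMain} by combining the existence result Theorem~\ref{existMain}, $\eta$-uniform a priori estimates, the unfolding convergence results of Section~\ref{periodicunfolding} (in particular Theorem~\ref{2ScaleElastProb} and Theorem~\ref{HelmholtzZerlegungThMain}), and the maximal-monotone graph convergence machinery of Section~\ref{MonoOpers}. First I would invoke Theorem~\ref{existMain} to produce, for each $\eta>0$, a strong solution $(u_\eta,\sigma_\eta,p_\eta)$. Then I would test equation \eqref{CurlPr1} against $\partial_t u_\eta$ and equation \eqref{CurlPr3} against $\partial_t p_\eta$, exploiting $0\in g(y,0)$, the monotonicity \eqref{monotype2}, the coercivity inequality \eqref{inequMain} and the self-adjointness of $\Curl$ under the micro-hard boundary condition to turn the $C_2$-term into a time derivative of $\tfrac12\|\Curl p_\eta\|_{L^2}^2$. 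The decisive ingredient is Korn's inequality for incompatible tensor fields \eqref{incompatible_korn}, available because $\Omega$ is assumed sliceable and $p_\eta\times n=0$ on $\partial\Omega$; this controls $\|p_\eta\|_{L^2}$ by $\|\sym p_\eta\|_{L^2}+\|\Curl p_\eta\|_{L^2}$ and closes the energy estimate. A parabolic-type bootstrap (differencing in time, in the spirit of Lemma~\ref{existLemma}) then yields uniform bounds on $\partial_t p_\eta$ in $L^2(0,T_e;L^2_{\Curl})$ and on $\Curl\Curl p_\eta$ in $L^2(\Omega_{T_e})$.

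From these bounds I would extract, up to a subsequence, the weak convergences \eqref{HomEq1}--\eqref{HomEq41}. The identification of the two-scale limits comes from the unfolding results: Proposition~\ref{UnfoldWeakConv}(c) gives \eqref{HomEq4} and the averaging identity $p=\int_Y p_0\,dy$; Proposition~\ref{UnfoldGradient} delivers \eqref{HomEq2} together with the cell corrector $u_1\in L^2(\Omega,H^1_{per}(Y,\R^3))$; Theorem~\ref{2ScaleElastProb}, applied for a.e.\ $t\in[0,T_e]$ with $\hat\varepsilon_\eta=\sym p_\eta$ and $\hat b=b(t)$, yields \eqref{HomEq3a}, \eqref{HomEq3}, the homogenized elasticity system \eqref{HomogEqua1}--\eqref{HomogEqua3} and the strong convergences \eqref{HomElast3}--\eqref{HomElast5}; and Theorem~\ref{HelmholtzZerlegungThMain} provides \eqref{HomEq42} and \eqref{HomEq44} with the cell corrector $p_1\in L^2(0,T_e;L^2(\Omega, W^{2,q^*}_{per}(Y,{\cal M}^3)))$ satisfying $\di_y p_1=0$. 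Passing to the limit linearly in the definition of $\Sigma^{\rm lin}_\eta$ gives \eqref{HomEq45} with $\Sigma^{\rm lin}_0=\sigma_0-C_1[y]\dev\sym p_0-C_2\tilde p$.

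The main obstacle is passing to the limit in the nonlinear inclusion $\partial_t p_\eta\in g(x/\eta,\Sigma^{\rm lin}_\eta)$ to recover \eqref{HomogEqua4}. My plan is to view the problem on $L^{q^*}(\Omega_{T_e}\times Y,{\cal M}^3)$: let $\mathcal G^\eta$ and $\mathcal G$ be the canonical extensions (Definition~\ref{CanExtension}) of $v\mapsto g(x/\eta,v)$ and $v\mapsto g(y,v)$ respectively. By Proposition~\ref{MainClassMaxMonoProp} these are maximal monotone; by the periodicity of $g$ in $y$, $\mathcal G^\eta(x,t)\rightarrowtail\mathcal G(x,y,t)$ pointwise, so Theorem~\ref{GraphConvCanExten} gives $\mathcal G^\eta\rightarrowtail\mathcal G$. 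Applying Theorem~\ref{convMaxMonGraph} to the pair $({\cal T}_\eta(\Sigma^{\rm lin}_\eta),{\cal T}_\eta(\partial_t p_\eta))$ then reduces matters to the verification of the limsup inequality
\begin{align*}
\limsup_{\eta\to 0}\frac{1}{|Y|}\int_0^{T_e}\!\!\!\int_{\Omega\times Y}{\cal T}_\eta(\Sigma^{\rm lin}_\eta)\cdot{\cal T}_\eta(\partial_t p_\eta)\,dx\,dy\,dt
\le\int_0^{T_e}\!\!\!\int_{\Omega\times Y}\Sigma^{\rm lin}_0\cdot\partial_t p_0\,dx\,dy\,dt.
\end{align*}

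To establish this, I would split $\Sigma^{\rm lin}_\eta\cdot\partial_t p_\eta$ into its three pieces. The elastic term $\sigma_\eta\cdot\partial_t p_\eta$ is rewritten using \eqref{CurlPr2} and integration by parts against $u_\eta$, so that the strong unfolding convergences \eqref{HomElast4}--\eqref{HomElast5} and the energy identity \eqref{HomElast3} of Theorem~\ref{2ScaleElastProb} deliver convergence with equality (not just $\le$). The hardening term $-C_1[x/\eta]\dev\sym p_\eta\cdot\partial_t p_\eta=-\tfrac12\partial_t(C_1[x/\eta]|\dev\sym p_\eta|^2)$ is handled by weak lower semicontinuity of the convex quadratic at the endpoint $t=T_e$ combined with strong convergence at $t=0$ (using $p^{(0)}=0$ in \eqref{Assumption1}). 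The backstress term $-C_2\Curl\Curl p_\eta\cdot\partial_t p_\eta=-\tfrac{C_2}{2}\partial_t\|\Curl p_\eta\|_{L^2}^2$ is treated analogously, using \eqref{HomEq42} and weak lower semicontinuity. Once the limsup inequality is secured, Theorem~\ref{convMaxMonGraph} yields $\partial_t p_0\in g(y,\Sigma^{\rm lin}_0)$ a.e., which is precisely \eqref{HomogEqua4}. The initial condition \eqref{HomCurlPr4} and boundary condition \eqref{HomCurlPr5}--\eqref{HomCurlPr6} follow by continuity of the relevant trace operators under the weak convergences already obtained.
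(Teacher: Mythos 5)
Your proposal follows essentially the same route as the paper: obtain $\eta$-uniform a priori estimates via energy-type testing (with the incompatible-Korn inequality \eqref{incompatible_korn} and a time-differencing bootstrap as in Lemma~\ref{existLemma}), identify the two-scale limits through the unfolding propositions and Theorem~\ref{HelmholtzZerlegungThMain}, and pass to the limit in the flow rule by canonical-extension graph convergence together with a $\limsup$ energy inequality reconstructed from the quadratic structure of $\Sigma^{\rm lin}_\eta\cdot\partial_t p_\eta$.

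One point deserves correction, however. You invoke Theorem~\ref{2ScaleElastProb} ``for a.e.\ $t\in[0,T_e]$ with $\hat\varepsilon_\eta=\sym p_\eta$'' to obtain the strong convergences \eqref{HomElast3}--\eqref{HomElast5} and hence equality on the elastic term. That theorem requires $\hat\varepsilon_\eta$ to converge \emph{strongly} in $L^2$, whereas here you only control $\sym p_\eta(t)$ weakly (there is no compactness argument giving strong $L^2$-convergence of $p_\eta(t)$ at fixed $t$). The paper uses Theorem~\ref{2ScaleElastProb} only at $t=0$, where $\hat\varepsilon_\eta = \sym p^{(0)} = 0$ trivially converges strongly, and handles the quadratic elastic endpoint term at time $t$ by weak lower semicontinuity of the norm, exactly as you do for the hardening and $\Curl$-terms. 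The homogenized elasticity relations \eqref{HomogEqua1}--\eqref{HomogEqua3} themselves follow from the weak convergences and oscillating test functions $\eta\phi(x)\psi(x/\eta)$, not from the strong convergence part of Theorem~\ref{2ScaleElastProb}. Since the $\limsup$ inequality only needs ``$\le$'' on the elastic term, replacing your claimed equality by the weak-lower-semicontinuity bound closes the argument and makes it match the paper's.
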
 
The proof of Theorem~\ref{HomoMain} is divided into two parts. In the next lemma
we derive the uniform estimates for $(u_\eta,\sigma_\eta, p_\eta)$ and then,
based on these estimates, we show the convergence result.
\subsection{Uniform estimates} 
First, we show that the sequence of solutions $(u_\eta,\sigma_\eta, p_\eta)$ is weakly compact.
\begin{Lem}\label{existLemma} Let all assumptions of Theorem~\ref{HomoMain} 
be satisfied.
Then the sequence
of solutions $(u_\eta,\sigma_\eta)$ is weakly compact in 
$H^{1}(0,T_e; H^{1}_0(\Omega, {\mathbb R}^3) \times
 L^{2} ({\Omega}, {\cal S}^3))$ and  $p_\eta$ is weakly compact in
 $H^{1}(0,T_e;L^{2}(\Omega,{\cal M}^3))\cap 
L^{2}(0,{T_e}, Z^{2}_{\Curl}(\Omega,{\cal M}^3))$. 
\end{Lem}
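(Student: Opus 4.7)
The approach is the classical energy testing for dissipative systems: pair the flow rule against $\partial_t p_\eta$, convert the resulting bilinear form into a total time derivative of the stored free energy plus an external load-work term, and then close everything via Gronwall. From the inclusion $\partial_t p_\eta \in g(x/\eta, \Sigma^{\rm lin}_\eta)$ the coercivity condition (\ref{inequMain}), applied with $v=\Sigma^{\rm lin}_\eta$ and $v^*=\partial_t p_\eta$, gives the pointwise bound
\begin{equation*}
\tfrac{\alpha}{q}|\Sigma^{\rm lin}_\eta|^q + \tfrac{\alpha}{q^*}|\partial_t p_\eta|^{q^*} \le \Sigma^{\rm lin}_\eta \cdot \partial_t p_\eta + m(x/\eta).
\end{equation*}
After integration over $\Omega$, I would rewrite the pairing using three ingredients: (i) the weak form of $-\di \sigma_\eta = b$ with $u_\eta|_{\partial\Omega}=0$, which yields $(\sigma_\eta, \sym \partial_t \nabla u_\eta)_\Omega = (b, \partial_t u_\eta)_\Omega$; (ii) the self-adjointness of $\Curl$ under the micro-hard boundary condition $p_\eta \times n|_{\partial\Omega}=0$, giving $(\Curl\Curl p_\eta,\partial_t p_\eta)_\Omega = \tfrac12 \tfrac{d}{dt}\|\Curl p_\eta\|_2^2$; and (iii) the time-independence of $\mathbb{C}[\cdot/\eta]$, $C_1[\cdot/\eta]$, $C_2$. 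The outcome is an energy identity of the form $\int_\Omega \Sigma^{\rm lin}_\eta\cdot\partial_t p_\eta\, dx + \tfrac{d}{dt}\Phi_\eta(t) = (b(t),\partial_t u_\eta(t))_\Omega$, where $\Phi_\eta$ is the sum of the three quadratic energies in (\ref{energy}).

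Integrating in time and using $p^{(0)}=0$, the load-work on the right is tamed by time-differentiating the weak momentum balance, testing with $\partial_t u_\eta$, and invoking classical Korn, which yields $\|\partial_t u_\eta\|_{H^1_0} \le C(\|\partial_t p_\eta\|_{L^2} + \|\partial_t b\|_{L^{q^*}})$. Young's inequality and a Gronwall argument, using the positivity of $\mathbb{C}$ and the assumption $C_1\ge\alpha_1>0$, then produce uniform-in-$\eta$ bounds on $\sigma_\eta$, $\sqrt{C_1[\cdot/\eta]}\,\dev\sym p_\eta$ and $\Curl p_\eta$ in $L^\infty(0,T_e;L^2(\Omega))$, and on $\Sigma^{\rm lin}_\eta$ in $L^q(\Omega_{T_e})$. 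Since $\Omega$ is sliceable and $p_\eta\times n|_{\partial\Omega}=0$, the incompatible Korn inequality (\ref{incompatible_korn}) upgrades these to a uniform $L^\infty_t L^2_x$ bound on $p_\eta$ itself. To recover $\Curl\Curl p_\eta \in L^2(\Omega_{T_e})$, I would simply solve the algebraic identity $C_2\Curl\Curl p_\eta = \sigma_\eta - C_1[\cdot/\eta]\dev\sym p_\eta - \Sigma^{\rm lin}_\eta$ for its left-hand side; because $q\ge2$ and $\Omega_{T_e}$ is bounded, the right-hand side is uniformly bounded in $L^2(\Omega_{T_e})$. Reflexivity and Banach--Alaoglu then yield the claimed weak compactness in every factor.

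The main obstacle is the bidirectional coupling between the elliptic momentum balance (which provides $u_\eta$) and the nonlinear flow rule (which provides $p_\eta$): the natural coercivity from $g$ only produces $\partial_t p_\eta$ in $L^{q^*}(\Omega_{T_e})$, whereas the statement requires $L^2(\Omega_{T_e})$. When $q>2$, closing this gap forces one to exploit the quadratic structure of $\Phi_\eta$ and the $L^\infty$-bounds together with uniform positivity of $\mathbb{C}[\cdot/\eta]$ and $C_1[\cdot/\eta]$, absorbing the $\|\partial_t p_\eta\|_{L^2}^2$ terms generated by the time-differentiated momentum equation. A secondary subtlety is that all the constants appearing in Korn, incompatible Korn, and the elastic coercivity must remain $\eta$-independent; this is automatic once one observes that these constants depend on $\Omega$ (fixed) and on the essential bounds of the coefficients, but not on the microstructure scale.
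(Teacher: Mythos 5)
Your outline of the first-order energy estimate is sound: the identity $\int_\Omega \Sigma^{\rm lin}_\eta\cdot\partial_t p_\eta\,dx + \tfrac{d}{dt}\Phi_\eta(t)=(b,\partial_t u_\eta)_\Omega$, the coercivity \eqref{inequMain}, incompatible Korn, and solving the algebraic relation for $C_2\Curl\Curl p_\eta$ do produce the $L^\infty_t L^2_x$ bounds and the $L^2(0,T_e;Z^2_{\Curl})$ bound on $p_\eta$, as in the paper (which carries this out at the Rothe-discretized level and then passes to the limit by lower semicontinuity).

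The genuine gap is the step you flagged yourself: upgrading $\partial_t p_\eta$ from $L^{q^*}(\Omega_{T_e})$ to $L^2(\Omega_{T_e})$ (and, more generally, obtaining the full $H^1(0,T_e;L^2_{\Curl}(\Omega))$ bound on $p_\eta$). Your proposed remedy---time-differentiating the momentum balance, testing with $\partial_t u_\eta$, and ``exploiting the quadratic structure of $\Phi_\eta$''---cannot close this. The time-differentiated momentum balance tested with $\partial_t u_\eta$ only yields
\[
\|\mathbb{B}^{1/2}\partial_t\sigma_\eta\|_2^2 + (\partial_t\sigma_\eta,\sym\partial_t p_\eta)_\Omega = (\partial_t b,\partial_t u_\eta)_\Omega,
\]
which, after Young's inequality, leaves an uncontrolled $\|\sym\partial_t p_\eta\|_2^2$ term; neither $\dev\sym\partial_t p_\eta$, nor $\Curl\partial_t p_\eta$, nor $\partial_t p_\eta$ itself is produced with a good sign from this equation. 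The tool that is actually needed---and the one the paper uses---is the \emph{monotonicity} of $g$ applied to the time-differenced flow rule. At the Rothe level one writes $\rt p^n_{\eta,m}-\rt p^{n-1}_{\eta,m}\in{\cal G}_\eta(\Sigma^{\rm lin}_{n,m})-{\cal G}_\eta(\Sigma^{\rm lin}_{(n-1),m})$, tests with $-(\Sigma^{\rm lin}_{n,m}-\Sigma^{\rm lin}_{(n-1),m})/h$, and observes that monotonicity makes this pairing nonpositive. Expanding $\Sigma^{\rm lin}$ into its three pieces and substituting the momentum balance then yields, after summation, the discrete counterpart of
\[
\alpha_1\|\dev\sym\partial_t p_\eta\|^2_{2,\Omega_{T_e}} + C_2\|\Curl\partial_t p_\eta\|^2_{2,\Omega_{T_e}} + C\|\partial_t\sigma_\eta\|^2_{2,\Omega_{T_e}} \le C_\varepsilon\|\partial_t b\|^2_{2,\Omega_{T_e}}+2\varepsilon\|\partial_t u_\eta\|^2_{2,\Omega_{T_e}},
\]
which, together with plastic incompressibility ($\dev\sym\partial_t p_\eta=\sym\partial_t p_\eta$), the incompatible Korn inequality \eqref{incompatible_korn} applied to $\partial_t p_\eta$, and elliptic regularity for the momentum balance (to absorb the $\varepsilon$-term), closes the estimate. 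Without invoking monotonicity of $g$ on the time-differenced inclusion, no amount of manipulation of $\Phi_\eta$ or of the time-differentiated elliptic equation recovers the $L^2$-in-time-and-space bound on the \emph{full} $\partial_t p_\eta$.

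A secondary methodological remark: your argument is entirely formal at the continuous level, whereas the paper derives the estimates on the Rothe approximants $(u_{\eta,m},\sigma_{\eta,m},p_{\eta,m})$ and passes to the limit; in particular the time-differentiation of the multivalued flow rule is only meaningful as a difference quotient, which is one reason the discretization is not merely cosmetic.
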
 
\begin{proof} 
To prove the lemma we recall the basic steps in the proof of the existence result
(Theorem~\ref{existMain}). For more details the reader is referred to 
\cite{NesenenkoNeff2012}. The time-discretized problem for (\ref{CurlPr1}) - (\ref{CurlPr6}) is introduced as follows:\\
Let us fix any
$m\in{\mathbb N}$ and set
\[h:=\frac{T_e}{2^m}, \ p^0_{\eta,m}:=0\ b^n_m:=\frac{1}{h}\int^{nh}_{(n-1)h}b(s)ds\in 
L^{q}( \Omega, {\mathbb R}^3),\ \ n=1,...,2^m. \]
Then we are looking for functions $u^n_{\eta,m}\in H^1(\Omega,{\mathbb R}^3)$,
$\sigma^n_{\eta,m}\in L^2(\Omega,{\cal S}^3)$ and $p^n_{\eta,m}\in
 Z^2_{\Curl}(\Omega,{\cal M}^3)$ with $p^n_{\eta,m}(x)\in\mathfrak{sl}(3)$ 
 for a.e. $x\in\Omega$ and
\[\Sigma^{\rm lin}_{n,m}:=\sigma^n_{\eta,m}-C_1[x/\eta] \dev \sym p^n_{\eta,m} -
\frac{1}{m}p^n_{\eta,m}-C_2\Curl\Curl p^n_{\eta,m}\in L^q(\Omega,{\cal M}^3)\]
solving the following problem
\begin{eqnarray}
- \di_x \sigma^n_{\eta,m}(x) &=&  b^n_m(x), \label{CurlPr1Dis}
\\[1ex]
\sigma^n_{\eta,m}(x) &=& 
{\mathbb C} [x/\eta]( \sym (\na_x u^n_{\eta,m}(x) - p^n_{\eta,m}(x) ) )   \label{CurlPr2Dis}
\\[1ex] 
\label{CurlPr3Dis} \frac{p^n_{\eta,m}(x)-p^{n-1}_{\eta,m}(x)}{h} & \in & 
g \big(x/\eta,\Sigma^{\rm lin}_{n,m}(x)\big),\label{microPr3Dis} 
\end{eqnarray}
together with the boundary conditions 
\begin{eqnarray} 
p^n_{\eta,m}(x)\times n (x)&=&0,\hspace{7ex} x \in \partial\Omega, 
\label{CurlPr5Dis}\\
u^n_{\eta,m}(x) &=& 0, \quad\quad\quad x \in \partial \Omega\,.\label{CurlPr6Dis}
\end{eqnarray}
Such functions $(u^n_{\eta,m}, \sigma^n_{\eta,m}, p^n_{\eta,m})$ exist 
 and satisfy the following estimate
\begin{eqnarray}\label{aprioriEstim1}
&&\frac{1}2\Big(
\|{\mathbb B}^{1/2}\sigma^l_{\eta,m}\|^2_2+\alpha_1\|\dev \sym p^l_{\eta,m}\|^2_2
+\frac{1}{m}\|p^l_{\eta,m}\|^2_2+ C_2\|\Curl p^l_{\eta,m}\|^2_2\Big)\non\\
&&+h\hat{C}\sum^l_{n=1}\left(\left\| \Sigma^{\rm lin}_{n,m}\right\|^{q}_{q}+
\Big\|\frac{p^n_{\eta,m}-p^{n-1}_{\eta,m}}h\Big\|^{q^*}_{q^*}\right)
\le
C^{(0)}+\int_\Omega m(x)dx
\\&&\hspace{3ex}
+h\tilde{C}\sum^l_{n=1}\Big(\|b^n_m\|^q_{q}+
\|(b^n_m-b^{n-1}_m)/h\|^{2}_{2}\Big)\non
\end{eqnarray}
for any fixed $l\in[1,2^m]$, where (here ${\mathbb B}:={\mathbb C}^{-1}$)
\[2C^{(0)}:=
\|{\mathbb B}^{1/2}\sigma^{(0)}\|^2_2\]
and  $\tilde{C}$, $\hat{C}$ are some positive
constants independent of $\eta$ (see \cite{NesenenkoNeff2012} for details). To proceed further
we introduce the Rothe approximation functions.\vspace{1ex}\\
{\bf Rothe approximation functions:} 
For any family $\{\xi^{n}_m\}_{n=0,...,2m}$ of functions in a reflexive Banach
space $X$, we define
{\it the piecewise affine interpolant} $\xi_m\in C([0,T_e],X)$ by
\begin{eqnarray}\label{RotheAffineinterpolant}
\xi_m(t):= \left(\frac{t}h-(n-1)\right)\xi^{n}_m+
\left(n-\frac{t}h\right)\xi^{n-1}_m \ \ {\rm for} \ (n-1)h\le t\le nh
\end{eqnarray}
and {\it the piecewise constant interpolant} $\bar\xi_m\in L^\infty(0,T_e;X)$ by
\begin{eqnarray}\label{RotheConstantinterpolant}
\bar\xi_m(t):=\xi^{n}_m\  {\rm for} \ (n-1)h< t\le nh, \ 
 n=1,...,2^m, \ {\rm and} \ \bar\xi_m(0):=\xi^{0}_m.
\end{eqnarray}
For the further analysis we recall the following property of $\bar\xi_m$
and $\xi_m$: 
\begin{eqnarray}\label{RotheEstim}
\|\xi_m\|_{L^q(0,T_e;X)}\le\|\bar\xi_m\|_{L^q(-h,T_e;X)}\le
\left(h \|\xi^{0}_m\|^q_X+\|\bar\xi_m\|^q_{L^q(0,T_e;X)}\right)^{1/q},
\end{eqnarray}
where $\bar\xi_m$ is formally extended to $t\le0$ by $\xi^{0}_m$ and
$1\le q\le\infty$ (see \cite{Roubi05}).\vspace{1.5ex}\\
Now, from (\ref{aprioriEstim1}) we get immediately that
\begin{eqnarray}
&&
\bar{C}\|\bar\sigma_{\eta,m}(t)\|^2_{\Omega}+\alpha_1\|\dev \sym\bar p_{\eta,m}(t)\|^2_2
+\frac{1}{m}\|\bar p_{\eta,m}(t)\|^2_2+ C_2\|\Curl\bar p_{\eta,m}(t)\|^2_2\non\\
&&\hspace{3ex}+
2\hat{C}\left(\|\partial_t p_{\eta,m}\|^{q^*}_{q^*,\Omega\times(0, T_e)}+
\|\bar\Sigma^{\rm lin}_{m}\|^{q}_{q,\Omega\times(0, T_e)}\right)\label{aprioriEstim2A}
\\
&&\hspace{3ex}\le 2C^{(0)}+2\|m\|_{1,\Omega}
+2\tilde{C}\|b\|^q_{W^{1,q}(0,{T_e};L^q(\Omega,{\cal S}^3))},\non
\end{eqnarray}
where $\bar{C}$ is some other constant independent of $\eta$.  In \cite{NesenenkoNeff2012}
it is shown that the Rothe approximation functions 
$(u_{\eta,m}, \sigma_{\eta,m}, p_{\eta,m})$ and 
$(\bar u_{\eta,m}, \bar\sigma_{\eta,m}, \bar p_{\eta,m})$ converge to the same limit
$(u_{\eta}, \sigma_{\eta}, p_{\eta})$. Due to the lower semi-continuity of the
norm and (\ref{aprioriEstim2A}) this convergence is uniform with respect to $\eta$.
Therefore, estimate (\ref{aprioriEstim2A}) provides that
\begin{eqnarray}
&&\{\sigma_\eta\}_\eta \ {\rm is}\  {\rm uniformly}\  {\rm bounded}\  {\rm in}
 \ L^{\infty}(0,T_e;L^2(\Omega,{\cal S}^3)),\label{aprioriEstim2}\\[1ex]
 &&\{\dev\sym p_\eta\}_\eta \ {\rm is}\  {\rm uniformly}\  {\rm bounded}\  {\rm in}
 \ L^\infty(0,{T_e};L^{2}(\Omega,{\cal M}^3)),\label{aprioriEstim3new1}\\[1ex]
&&\{\Curl p_\eta\}_\eta\ {\rm is}\  {\rm uniformly}\  {\rm bounded}\  {\rm in}
 \ L^\infty(0,{T_e};L^{2}(\Omega,{\cal M}^3)),\label{aprioriEstim3new2}\\[1ex]
&&\{p_\eta\}_\eta  \ {\rm is}\  {\rm uniformly}\  {\rm bounded}\  {\rm in}
 \ W^{1,{q^*}}(0,{T_e};L^{q^*}(\Omega,{\cal M}^3)),\label{aprioriEstim3}\\[1ex]
&&\{\Sigma^{\rm lin}_\eta\}_\eta \ {\rm is}\  {\rm uniformly}\  {\rm bounded}\  {\rm in}
 \ L^{q}(\Omega_{T_e},{\cal M}^3).\label{aprioriEstim6}
\end{eqnarray}
Furthermore, from estimates (\ref{incompatible_korn}),
 (\ref{aprioriEstim2}) - (\ref{aprioriEstim6}) we obtain
easily that
\begin{eqnarray}
&&\{u_\eta\}_\eta \ {\rm is}\  {\rm uniformly}\  {\rm bounded}\  {\rm in}
 \ L^{2}(0,T_e;H^{1}_0(\Omega,{\mathbb R}^3)),\label{aprioriEstim7}\\[1ex]
&&\{p_\eta\}_\eta \ {\rm is}\  {\rm uniformly}\  {\rm bounded}\  {\rm in}\
L^{2}(0,{T_e};Z^{2}_{\Curl}(\Omega,{\cal M}^3)).\label{aprioriEstim9}
\end{eqnarray}
{\bf Additional regularity of discrete solutions.}  
In order to get the additional 
a'priori estimates, we extend the function $b$ to $t<0$ by setting
$b(t)=b(0)$. 
The extended function $b$ is in the space
$W^{1,p}(-2h,T_e;W^{-1,p}(\Omega,{\mathbb R}^3))$. Then, we set
 $b^0_m=b^{-1}_m:=b(0)$. 
Let us further set $$p_{\eta,m}^{-1}:=p_{\eta,m}^0-h {\cal G}_\eta(\Sigma^{\rm lin}_{0,m}),$$
where ${\cal G}_\eta:L^p(\Omega, {\cal M}^3)\to2^{L^q(\Omega,\mathfrak{sl}(3))}$ denotes
 the canonical extensions of $g(x/\eta,\cdot):{{\cal M}^3}\to2^{\mathfrak{sl}(3)}$.
The assumption (\ref{Assumption1}) and the homogeneous initial condition
 imply that $p_{\eta,m}^{-1}=0$.
Next, we define functions
$(u_{\eta,m}^{-1}, \sigma_{\eta,m}^{-1})$ and 
$(u_{\eta,m}^0, \sigma_{\eta,m}^0)$ as solutions of the 
linear elasticity problem (\ref{elast1}) - (\ref{elast3})
to the data $\hat b=b^{-1}_m$, $\hat\gamma=0$, $\hat\ve_p=0$ 
and $\hat b=b^0_m$, $\hat\gamma=0$, $\hat\ve_p=0$,
respectively. Obviously, 
the following estimate holds
\begin{eqnarray}\label{IneqPrepare}
\left\{
\left\|\frac{u_{\eta,m}^0-u_{\eta,m}^{-1}}h\right\|_{2},
\left\|\frac{\sigma_{\eta,m}^0-\sigma_{\eta,m}^{-1}}h\right\|_{2}\right\}\le C,
\end{eqnarray}
where $C$ is some positive constant independent of $m$ and $\eta$.
Taking now the incremental ratio of (\ref{CurlPr3Dis}) for
$n=1,...,2^m$, we obtain\footnote{For sake of
simplicity we use the following notation 
$\rt\phi^n_m:=(\phi^n_m-\phi^{n-1}_m)/h$, where 
 $\phi^{0}_m,\phi^{1}_m,...,\phi^{2m}_m$ is any family of functions.}
\[\rt p^n_{\eta,m}-\rt p^{n-1}_{\eta,m}={\cal G}_\eta(\Sigma ^{\rm lin}_{n,m})-
{\cal G}_\eta(\Sigma ^{\rm lin}_{(n-1),m}).\]
Let us now multiply the last identity by 
$-(\Sigma^{\rm lin}_{n,m}- \Sigma^{\rm lin}_{(n-1),m})/h$.
Then using the monotonicity of ${\cal G}_\eta$ we obtain that
\[\frac{1}{m} \Big(\rt p^n_{\eta,m}-\rt p^{n-1}_{\eta,m}, \rt p^n_{\eta,m}\Big)_\Omega
+\big(\rt p^n_{\eta,m}-\rt p^{n-1}_{\eta,m}, C_1\dev \sym(\rt p^n_{\eta,m})\big)_\Omega\]\[
+\big(\rt p^n_{\eta,m}-\rt p^{n-1}_{\eta,m}, C_2\Curl\Curl(\rt p^n_{\eta,m})\big)_\Omega
\le\big(\rt p^n_{\eta,m}-\rt p^{n-1}_{\eta,m},\rt\sigma^{n}_{\eta,m}\big)_\Omega.
\]
 With 
(\ref{CurlPr1Dis}) and (\ref{CurlPr2Dis}) the previus inequality can be
rewritten as follows
\[\frac{1}{m} \Big(\rt p^n_{\eta,m}-\rt p^{n-1}_{\eta,m}, \rt p^n_{\eta,m}\Big)_\Omega
+\big(\rt p^n_{\eta,m}-\rt p^{n-1}_{\eta,m}, C_1\dev \sym(\rt p^n_{\eta,m})\big)_\Omega\]\[
+\big(\rt p^n_{\eta,m}-\rt p^{n-1}_{\eta,m}, C_2\Curl\Curl(\rt p^n_{\eta,m})\big)_\Omega
+\big(\rt \sigma^n_{\eta,m}-\rt \sigma^{n-1}_{\eta,m},{\mathbb C}^{-1}\rt\sigma^{n}_{\eta,m}\big)_\Omega
\]\[\le
\big(\rt u^n_{\eta,m}-\rt u^{n-1}_{\eta,m},\rt b^{n}_m\big)_\Omega.
\]
As in the proof of (\ref{aprioriEstim1}),
 multiplying  the last inequality by $h$ and summing with respect to $n$
 from 1 to $l$ 
for any fixed $l\in[1,2^m]$ we get the estimate
\begin{eqnarray}\label{IneqExstSols}
&&\frac{h}{m}\|\rt p^l_{\eta,m}\|^2_{2}+h\alpha_1\|\dev\sym\rt p^l_{\eta,m}\|^2_{2}+
h\|{\mathbb B}^{1/2}\rt\sigma^l_{\eta,m}\|^2_{2}\non\\[1ex]
&&+hC_2
\|\Curl\rt p^l_{\eta,m}\|^2_{2}\le 2hC^{(0)}
+ 2h\sum^l_{n=1}\big(\rt u^n_{\eta,m}-\rt u^{n-1}_{\eta,m},\rt b^{n}_m\big)_\Omega,
\end{eqnarray}
where now $C^{(0)}$ denotes
\[2C^{(0)}:=\|{\mathbb B}^{1/2}\rt \sigma^0_{\eta,m}\|^2_{2}.\]
We note that \eq{IneqPrepare} yields the uniform
boundness of $C^{(0)}$ with respect to $m$. Now, using Young's
inequality with $\epsilon>0$ in (\ref{IneqExstSols}) and then
summing the resulting inequality for $l=1,...,2^m$ we derive the inequality
\begin{eqnarray}\label{aprioriEstim21}
\frac{1}{m}\left\|\partial_tp_m\right\|^2_{2,\Omega_{T_e}}+\alpha_1
\left\|\dev \sym\left(\partial_tp_m\right)\right\|^2_{2,\Omega_{T_e}}+
C_2\left\|\Curl\left(\partial_tp_m\right)\right\|^2_{2,\Omega_{T_e}}\\
+C\left\|\partial_t\sigma_m\right\|^2_{2,\Omega_{T_e}}\le 
C_\varepsilon\|\partial_tb_m\|^2_{2,\Omega_{T_e}}+2\varepsilon
\|\partial_tu_m\|^2_{2,\Omega_{T_e}},\non
\end{eqnarray}
where $C_\varepsilon$ is some positive constant independent of $m$ and $\eta$. 
Using now inequality (\ref{incompatible_korn}), the condition 
$\partial_t p_m(x, t)\in\mathfrak{sl}(3)$ for a.e. $(x, t)\in\Omega_{T_e}$,  and the 
ellipticity theory of linear systems we obtain that
\begin{eqnarray}\label{aprioriEstim22}
\frac{1}{m}\left\|\partial_tp_m\right\|^2_{2,\Omega_{T_e}}+
C_\epsilon(\Omega)\left\|\partial_tp_m\right\|^2_{2,\Omega_{T_e}}+
C\left\|\partial_t\sigma_m\right\|^2_{2,\Omega_{T_e}}\le 
C_\varepsilon\|\partial_tb_m\|^2_{2,\Omega_{T_e}},
\end{eqnarray}
where $C_\varepsilon(\Omega)$ is some further positive constant independent of $m$ and $\eta$.
Since $b_m$ is uniformly bounded in $W^{1,q}(\Omega_{T_e}, {\cal S}^3)$, 
estimates (\ref{aprioriEstim21}) and (\ref{aprioriEstim22}) imply 
\begin{eqnarray}
&&\{\dev\sym\partial_t p_\eta\}_\eta \ {\rm is}\  {\rm uniformly}\  {\rm bounded}\  {\rm in}
 \ L^2(0,{T_e};L^{2}(\Omega,{\cal M}^3)),\label{aprioriEstim7}\\[1ex]
&&\{\partial_t\sigma_\eta\}_\eta\ {\rm is}\  {\rm uniformly}\  {\rm bounded}\  {\rm in}
 \ L^2(0,{T_e};L^{2}(\Omega,{\cal M}^3)),\label{aprioriEstim7a}\\[1ex]
&&\{\Curl\partial_t p_\eta\}_\eta\ {\rm is}\  {\rm uniformly}\  {\rm bounded}\  {\rm in}
 \ L^2(0,{T_e};L^{2}(\Omega,{\cal M}^3)),\label{aprioriEstim7aa}\\[1ex]
&&\{p_\eta\}_\eta \ {\rm is}\  {\rm uniformly}\  {\rm bounded}\  {\rm in}
 \ H^1(0,{T_e};L^{2}_{\Curl}(\Omega,{\cal M}^3)). \label{aprioriEstim9}
\end{eqnarray}
The proof of the lemma is complete.
\end{proof}
\subsection{Proof of Theorem~\ref{HomoMain}} 
Now, we can prove Theorem~\ref{HomoMain}.
\begin{proof} Due to Lemma~\ref{existLemma}, we have that  the sequence
of solutions $(u_\eta,\sigma_\eta)$ is weakly compact in 
$H^{1}(0,T_e; H^{1}_0(\Omega, {\mathbb R}^3) \times
 L^{2} ({\Omega}, {\cal S}^3))$ and the sequence
$p_\eta$ is weakly compact in
 $H^{1}(0,T_e;L^{2}(\Omega,{\cal M}^3))\cap 
L^{2}(0,{T_e};Z^{2}_{\Curl}(\Omega,{\cal M}^3))$. 
Thus, by Proposition~\ref{UnfoldWeakConv}, Proposition~\ref{UnfoldGradient} and
Theorem~\ref{HelmholtzZerlegungThMain}, the uniform estimates (\ref{aprioriEstim2}) -
(\ref{aprioriEstim9}) yield that
there exist functions $u_0$, $u_1$, $\sigma$, $\sigma_0$, $p$, $p_0$ and $p_1$ with the prescribed
regularities in Theorem~\ref{HomoMain} such that the convergences in
\eq{HomEq1} - \eq{HomEq45} hold.  Note that 
\eq{HomEq2} - \eq{HomEq4} give the equation \eq{HomogEqua3}, i.e
\begin{eqnarray}\label{HelpEq0}
\sigma_0(x, y, t) = {\mathbb C}[y] \big( \sym (\na_x u_0(x, t)+\na_y u_1(x, y, t)
  - p_0(x, y, t))\big), \ \ {\rm a.e.}
\end{eqnarray}
By Proposition~\ref{UnfoldWeakConv}, the weak-star limit $\sigma$ of $\sigma_\eta$ in 
$L^{\infty}(0,T_e;L^2(\Omega,{\cal S}^3))$ and the weak limit  
$p$ of $p_\eta$ in $L^{2}(0,T_e;L^{2}(\Omega,{\cal M}^3))$ are related to
$\sigma_0$ and $p_0$ in the following ways
 \[\sigma(x, t)=\int_{Y}\sigma_0(x,y,t) dy, \ \ p(x, t)=\int_{Y}p_0(x,y,t) dy.\]
Now, as in \cite{Damlamian09a}, we consider any
 $\phi\in C_0^\infty(\Omega,{\mathbb R}^3)$. Then, 
by the weak convergence of $\sigma_\eta$,
the passage to the weak limit in \eq{CurlPr1} yields
\begin{eqnarray}\label{HelpEq1}
\int_\Omega(\sigma(x,t),\na\phi(x))dx=\int_\Omega(b(x,t),\phi(x))dx,
\end{eqnarray}
i.e $\di_x\sigma=b$ in the sense of distributions. 
Next, define 
$\phi_\eta(x)=\eta\phi(x)\psi(x/\eta)$, where 
$\phi\in C_0^\infty(\Omega,{\mathbb R}^3)$
and $\psi\in C_{per}^\infty(Y,{\mathbb R}^3)$. Then, one obtains
 that 
$$\phi_\eta\rightharpoonup 0,\ \ {\rm in}\ H^{1}_{0}(\Omega,{\mathbb R}^3),
\ \ {\rm and}\ \ 
{\cal T}_\eta(\na \phi_\eta)\to \phi\na_y\psi ,\ \ {\rm in}\ 
L^2(\Omega,H^{1}_{per}(Y,{\mathbb R}^3)).$$ 
Therefore, since $\phi_\eta$ has a compact support,
\begin{eqnarray}\label{HelpIntId}
\int_{\Omega\times Y}({\cal T}_\eta(\sigma_\eta(t)),
{\cal T}_\eta(\na\phi_\eta))dxdy=
\int_{\Omega}(b(t),\phi_\eta)dx. 
\end{eqnarray} 
The passage to the limit in \eq{HelpIntId} leads to
 \[\int_{\Omega\times Y}
(\sigma_0(x,y,t),\phi(x)\na_y\psi(y))dxdy=0.\]
Thus, in virtue of the arbitrariness of $\phi$, one can conclude that
\begin{eqnarray}\label{HelpEq2}
\int_{\Omega\times Y}
(\sigma_0(x,y,t),\na_y\psi(y))dxdy=0.
\end{eqnarray} 
i.e $\di_y \sigma_0(x,\cdot,t)=0$ in the sense of distributions.

Next, let 
${\cal T}_\eta({\cal G}_\eta):L^p(\Omega\times Y,{\mathbb R^N})\to2^{L^q(\Omega\times Y,{\mathbb R^N})}$
 and ${\cal G}:L^p(\Omega,{\mathbb R^N})\to2^{L^q(\Omega,{\mathbb R^N})}$ denote the canonical extensions of ${\cal T}_\eta(g_\eta)(x,y):{\mathbb R^N}\to2^{\mathbb R^N}$
and $g(y):{\mathbb R^N}\to2^{\mathbb R^N}$, respectively. Here, $g(y)$ is the pointwise  limit
graph of the convergent sequence of graphs ${\cal T}_\eta(g_\eta)(x,y)$.  The existence
of the limit graph for ${\cal T}_\eta(g_\eta)(x,y)$ guaranteed by Theorem~\ref{convMaxMonGrEquiv}.
Indeed, the resolvent $j_\lambda^{{\cal T}_\eta(g_\eta)}$ converges pointwise to the
resolvent $j_\lambda^{g}$, what follows from the periodicity of the mapping
$y \rightarrow g(y,z):Y\to2^{\mathbb R^N}$ and the simple computations:
\[j_\lambda^{{\cal T}_\eta(g_\eta)}(x,y,z)={\cal T}_\eta(j_\lambda^{g_\eta})(x,y,z)
=j_\lambda^{g}(y,z),
\]
for a.e. $(x,y)\in\Omega\times Y$ and every $z\in{\mathbb R}^N$. Thus, by 
Theorem~\ref{convMaxMonGrEquiv} we get that
\begin{eqnarray}\label{PointwiseGraph}
{\cal T}_\eta(g_\eta)(x,y)\rightarrowtail g(y)
\end{eqnarray}
holds for a.e. $(x,y)\in\Omega\times Y$. Since $g_\eta\in {\cal M}(\Omega,{\mathbb R}^N,p,\alpha,m)$, by Definition~\ref{UnfoldingOperMulti} of the unfolding operator for a
multi-valued function it follows that ${\cal T}_\eta(g_\eta)\in {\cal M}(\Omega\times Y,{\mathbb R}^N,p,\alpha,m)$. Therefore, due to this and convergence (\ref{PointwiseGraph}),
by Propositon~\ref {MainClassMaxMonoProp}(b) we obtain that
\begin{eqnarray}\label{ConvergeGraph}
{\cal T}_\eta({\cal G}_\eta)\rightarrowtail {\cal G}.
\end{eqnarray}
To prove that the limit functions $(\sigma_0,p_0)$ satisfy (\ref{HomogEqua4}), we apply 
Theorem~\ref{convMaxMonGraph}. Since the graph convergence is already established,
we show that condition (\ref{convMaxMonGraphCondition}) is fulfilled.
Using equations \eq{CurlPr1}
and \eq{CurlPr2}, we successfully compute that
\begin{eqnarray}\label{HelpLimitPass}
&&\frac1{|Y|}\int_{\Omega\times Y}({\cal T}_\eta(\partial_tp_\eta(t)),
{\cal T}_\eta(\Sigma^{\rm lin}_{\rm \eta}(t)))dxdy\non\\
&&=\frac1{|Y|}\int_{\Omega\times Y}\big(
{\cal T}_\eta\big(\partial_t(\sym(\na u_\eta(t)) - {\mathbb C}^{-1}\sigma_\eta(t)) \big),
{\cal T}_\eta(\sigma_\eta(t))\big)dxdy\non\\
&&+\frac1{|Y|}\int_{\Omega\times Y}\big(
{\cal T}_\eta(\partial_tp_\eta(t)),{\cal T}_\eta(
\Sigma^{\rm lin}_{\rm sh,\eta}(t)+\Sigma^{\rm lin}_{\rm curl,\eta}(t))\big)dxdy\non\\
&&=\int_{\Omega}(b(t),\partial_t u_\eta(t) )dx-
\frac1{|Y|}\int_{\Omega\times Y}\big(
{\cal T}_\eta(\partial_t{\mathbb C}^{-1}\sigma_\eta(t))),
{\cal T}_\eta(\sigma_\eta(t))\big)dxdy\non\\
&&-\frac1{|Y|}\int_{\Omega\times Y}\big(C_1
{\cal T}_\eta(\partial_t\dev\sym p_\eta(t)),
{\cal T}_\eta(\dev\sym p_\eta(t))\big)dxdy\non\\
&&-\frac1{|Y|}\int_{\Omega\times Y}\big(C_2
{\cal T}_\eta(\partial_t\Curl p_\eta(t)),
{\cal T}_\eta(\Curl p_\eta(t))\big)dxdy.\non
\end{eqnarray}
Integrating the last identity over $(0,t)$ and using the 
integration-by-parts formula we get that
\begin{eqnarray}\label{HelpLimitPass1}
&&\frac1{|Y|}\int_0^t({\cal T}_\eta(\partial_tp_\eta(t)),
{\cal T}_\eta(\Sigma^{\rm lin}_{\rm \eta}(t)))_{\Omega\times Y}dt=
\int_0^t(b(t),\partial_t u_\eta(t) )_\Omega dt\\
&&\hspace{6ex}-
\frac12\|{\cal T}_\eta({\cal B}^{1/2}\sigma_\eta(t))\|^2_{2,\Omega\times Y}
+\frac12\|{\cal T}_\eta({\cal B}^{1/2}\sigma_\eta(0))\|^2_{2,\Omega\times Y}\non\\
&&\hspace{6ex}-\frac12\|C_1^{1/2}{\cal T}_\eta(\dev\sym p_\eta(t))\|^2_{2,\Omega\times Y}
-\frac12\|C_2^{1/2}{\cal T}_\eta(\Curl p_\eta(t))\|^2_{2,\Omega\times Y},\non
\end{eqnarray}
where ${\cal B}={\mathbb C}^{-1}$. 
Moreover, since $\sigma_\eta(0)$ solves the linear elasticity problem \eq {elast1}  - 
\eq {elast3} with $\hat\ve_{\eta}=0$ and $\hat b=b(t)$,
by  \cite[Theorem 4.1]{Nesenenko12a}, we can conclude that
${\cal T}_\eta({\cal B}^{1/2}\sigma_\eta(0))$ converges to 
${\cal B}^{1/2}\sigma_0(0)$ strongly in $L^2(\Omega\times Y,{\cal S}^3)$.
Thus, by the lower semi-continuity of the norm the 
passing to the limit in \eq{HelpLimitPass1} yields
\begin{eqnarray}
&&\limsup_{n\to\infty}\frac1{|Y|}\int_0^t({\cal T}_\eta(\partial_tp_\eta(t)),
{\cal T}_\eta(\Sigma^{\rm lin}_{\rm \eta}(t)))_{\Omega\times Y}dt\non\\
&&\le\int_0^t(b(t),\partial_t u_0(t))_{\Omega}dt-
\frac12\|{\cal B}^{1/2}\sigma_0(t)\|^2_{2,\Omega\times Y}
+\frac12\|{\cal B}^{1/2}\sigma_0(0)\|^2_{2,\Omega\times Y}\non\\
&&\hspace{4ex}-\frac12\|C_1^{1/2}\dev\sym p_0(t)\|^2_{2,\Omega\times Y}
-\frac12\|C_2^{1/2}\Curl p_0(t)\|^2_{2,\Omega\times Y},\non
\end{eqnarray}
or
\begin{eqnarray}\label{HelpLimitPass2}
&&\limsup_{n\to\infty}\frac1{|Y|}\int_0^t({\cal T}_\eta(\partial_tp_\eta(t)),
{\cal T}_\eta(\Sigma^{\rm lin}_{\rm \eta}(t)))_{\Omega\times Y}dt\non\\
&&\le \int_0^t(b(t),\partial_t u_0(t))_{\Omega}dt-
\frac1{|Y|}\int_0^t\big(
\partial_t{\mathbb C}^{-1}\sigma_0(t),\sigma_0(t)\big)_{\Omega\times Y}dt\\
&&-\frac1{|Y|}\int_0^t\big(
\partial_t\dev\sym p_0(t), C_1\dev\sym p_0(t)\big)_{\Omega\times Y}dt\non\\
&&-\frac1{|Y|}\int_0^t\big(
\partial_t\Curl p_0(t), C_2\Curl p_0(t)\big)_{\Omega\times Y}dt\non
\end{eqnarray} 
We note that \eq{HelpEq1} and \eq{HelpEq2} imply
\begin{eqnarray}\label{HelpLimitPass3}
\int_{\Omega}(b(t),\partial_t u_0(t))dx =
\frac1{|Y|}\int_{\Omega\times Y}\big(\sigma_0(t),
\partial_t\ve(\na u_0(t)+\na_y u_1(t))\big)dxdy.
\end{eqnarray}
And, since for almost all $(x,y,t)\in\Omega\times Y\times(0,T_e)$ one has
\[\big(\partial_t\dev\sym p_0(x,y,t), C_1[y]\dev\sym p_0(x,y,t)\big)\] \[
=\big(\partial_tp_0(x,y,t), C_1[y]\dev\sym p_0(x,y,t)\big),\]
and that for almost all $t\in(0,T_e)$ 
\[\big(\partial_t\Curl p_0(t), C_2\Curl p_0(t)\big)_{\Omega\times Y}
=\big(\partial_tp_0(t), C_2\Curl\Curl p_0(t)\big)_{\Omega\times Y},\]
the relations \eq{HelpLimitPass2} and \eq{HelpLimitPass3} together with 
\eq{HelpEq0} yield
\begin{eqnarray}\label{HelpLimitPass4}
&& \limsup_{n\to\infty}\frac1{|Y|}\int_0^t({\cal T}_\eta(\partial_tp_\eta(t)),
{\cal T}_\eta(\Sigma^{\rm lin}_{\eta}(t)))_{\Omega\times Y}dt\non\\
&&\hspace{9ex}\le 
\frac1{|Y|}\int_0^t\big(
\partial_t p_0(t), \Sigma^{\rm lin}_0(t)\big)_{\Omega\times Y}dt.
\end{eqnarray}
In virtue of convergence (\ref{ConvergeGraph}) and
inequality (\ref{HelpLimitPass4}), Theorem~\ref{convMaxMonGraph} yields that
\[[\Sigma^{\rm lin}_0(x,y,t),\partial_t p_0(x,y,t)]\in Gr g(y)\] 
or, equivalently, that
\[\partial_t p_0(x,y,t)\in g(y,\Sigma^{\rm lin}_0(x,y,t)).\]
 The initial and boundary conditions (\ref{HomCurlPr4}) -
(\ref{HomCurlPr6}) for the limit functions $u_0, p$ and $p_0$ are easily obtained 
from the weak compactness of
$u_\eta$ and $p_\eta$ in the spaces $H^{1}(0,T_e; H^{1}_0(\Omega, {\mathbb R}^3))$
and $H^{1}(0,T_e;L^{2}(\Omega,{\cal M}^3))\cap 
L^{2}(0,{T_e};Z^{2}_{\Curl}(\Omega,{\cal M}^3))$, respectively. 
Therefore, summarizing everything done above, we conclude that the
functions $(u_0, u_1, \sigma, \sigma_0, p, p_0, p_1)$ 
satisfy the homogenized initial-boundary value
problem formed by the equations/inequalities \eq{HomogEqua1} - \eq{HomCurlPr6}.
\end{proof}

\hspace{-3ex}{\bf\large Acknowledgement.}
The author thanks the unknown reviewer for the critical reading of the manuscript 
and valuable comments which resulted in the improvements of the proofs and of the presentation 
of the obtained results. 

\bibliographystyle{plain} 
{\footnotesize
\bibliography{literatur1,literaturliste,literaturliste0}

\begin{thebibliography}{10}

\bibitem{AgmonDouglisNirenberg}
S.~Agmon, A.~Douglis, and L.~Nirenberg.
\newblock Estimates near the boundary for solutions of elliptic partial
  differential equations satisfying general boundary conditions. {II}.
\newblock {\em Comm. Pure Appl. Math.}, 17:35 -- 92, 1964.

\bibitem{Alb98}
H.-D. Alber.
\newblock {\em Materials with {M}emory. {I}nitial-{B}oundary {V}alue {P}roblems
  for {C}onstitutive {E}quations with {I}nternal {V}ariables}, volume 1682 of
  {\em Lecture Notes in Mathematics}.
\newblock Springer, Berlin, 1998.

\bibitem{Alb00}
H.-D. Alber.
\newblock Evolving microstructure and homogenization.
\newblock {\em Contin. Mech. Thermodyn.}, 12:235--286, 2000.

\bibitem{Alb03}
H.-D. Alber.
\newblock Justification of homogenized models for viscoplastic bodies with
  microstructure.
\newblock In K.~Hutter and H.~Baaser, editors, {\em Deformation and {F}ailure
  in {M}etallic {M}aterials.}, volume~10 of {\em Lecture Notes in Applied
  Mechanics}, pages 295--319. Springer, Berlin, 2003.

\bibitem{AlbNese09b}
H.-D. Alber and S.~Nesenenko.
\newblock Justification of homogenization in viscoplasticity: from convergence
  on two scales to an asymptotic solution in {$L^2(\Omega)$}.
\newblock {\em J. Multiscale Modeling}, 1(2):223--244, 2009.

\bibitem{Alla92}
G.~Allaire.
\newblock Homogenization and two-scale convergence.
\newblock {\em SIAM J. Math. Anal.}, 23(6):1482--1518, 1992.

\bibitem{Arbogast_Hornung_1990}
T.~Arbogast, J.~Douglas, and U.~Hornung.
\newblock Derivation of the double porosity model of single phase flow via
  homogenization theory.
\newblock {\em SIAM J. Math. Anal.}, 21:823--836, 1990.

\bibitem{Attouch84}
H.~Attouch.
\newblock {\em Variational {C}onvergence for {F}unctions and {O}perators}.
\newblock Applicable Mathematics Series. Pitman (Advanced Publishing Program),
  Boston, 1984.

\bibitem{Barb76}
V.~Barbu.
\newblock {\em Nonlinear {S}emigroups and {D}ifferential {E}quations in
  {B}anach {S}paces}.
\newblock Editura Academiei, Bucharest, 1976.

\bibitem{Bardella10}
L.~Bardella.
\newblock Size effects in phenomenological strain gradient plasticity
  constitutively involving the plastic spin.
\newblock {\em Internat. J. Engrg. Sci.}, 48(5):550--568, 2010.

\bibitem{Brez73}
H.~Br$\acute{e}$zis.
\newblock {\em Operateurs {M}aximaux {M}onotones}.
\newblock North Holland, Amsterdam, 1973.

\bibitem{Casado_Diaz_2000}
J.~Casado-Diaz.
\newblock Two-scale convergence for nonlinear dirichlet problems in perforated
  domains.
\newblock {\em Proc. Roy. Soc. Edinburgh Sec. A}, 130:249--276, 2000.

\bibitem{Casado_Diaz_2001}
J.~Casado-Diaz, M.~Luna-Laynez, and J.D. Martin-G$\acute{o}$men.
\newblock An adaptation of the multi-scale methods for the analysis of very
  thin reticulated structures.
\newblock {\em C. R. Acad. Sci. Paris, S{\'e}rie 1}, 332:223--228, 2001.

\bibitem{Castaing77}
C.~Castaing and M.~Valadier.
\newblock {\em Convex analysis and {M}easurable {M}ultifunctions}, volume 580
  of {\em Lecture Notes in Mathematics Studies}.
\newblock Springer, Berlin, 1977.

\bibitem{Cioranescu12}
D.~Cioranescu, A.~Damlamian, P.~Donato, G.~Griso, and R.~Zaki.
\newblock The periodic unfolding method in domains with holes.
\newblock {\em SIAM J. Math. Anal.}, 44(2):718--760, 2012.

\bibitem{Cioranescu02}
D.~Cioranescu, A.~Damlamian, and G.~Griso.
\newblock The periodic unfolding and homogenization.
\newblock {\em C. R. Acad. Sci. Paris Math}, 335(1):99--104, 2002.

\bibitem{Cioranescu08}
D.~Cioranescu, A.~Damlamian, and G.~Griso.
\newblock The periodic unfolding method in homogenization.
\newblock {\em SIAM J. Math. Anal.}, 40(4):1585--1620, 2008.

\bibitem{Damlamian07}
A.~Damlamian, N.~Meunier, and J.~Van Schaftingen.
\newblock Periodic homogenization of monotone multivalued operators.
\newblock {\em Nonlinear Anal., Theory Methods Appl.}, 67:3217--3239, 2007.

\bibitem{Damlamian09a}
A.~Damlamian, N.~Meunier, and J.~Van Schaftingen.
\newblock Periodic homogenization for convex functionals using mosco
  convergence.
\newblock {\em Ricerche mat.}, 57:209 -- 249, 2008.

\bibitem{Duvaut76}
G.~Duvaut and J.L. Lions.
\newblock {\em Inequalities in Mechanics and Physics.}
\newblock Springer, New-York, 1976.

\bibitem{Ebobisse_Neff09}
F.~Ebobisse and P.~Neff.
\newblock Rate-independent infinitesimal gradient plasticity with isotropic
  hardening and plastic spin.
\newblock {\em Math. Mech. Solids}, 15(6):691--703, 2010.

\bibitem{FleckWillis2004}
A.~Fleck and J.~R. Willis.
\newblock Bounds and estimates for the effect of strain gradients upon the
  effective plastic properties of an isotropic two-phase composite.
\newblock {\em J. Mech. Phys. Solids}, 52(8):1855 -- 1888, 2004.

\bibitem{Francfort2012}
G.~A. Francfort, A.Giacomini, and A.~Musesti.
\newblock On the {F}leck and {W}illis homogenization procedure in strain
  gradient plasticity.
\newblock {\em Preprint}, 2012.

\bibitem{Francu_2010}
J.~Francu.
\newblock Modification of unfolding approach to two-scale convergence.
\newblock {\em Mathematica Bohemica}, 135(4):402--412, 2010.

\bibitem{Francu_Svanstedt_2012}
J.~Francu and N.~E. Svanstedt.
\newblock Some remarks on two-scale convergence and periodic unfolding.
\newblock {\em Applications of Mathematics}, 57(4):359--375, 2012.

\bibitem{Lussardi08}
A.~Giacomini and L.~Lussardi.
\newblock Quasistatic evolution for a model in strain gradient plasticity.
\newblock {\em SIAM J. Math. Anal.}, 40(3):1201--1245, 2008.

\bibitem{Giacomini2011}
A.~Giacomini and A.~Musesti.
\newblock Two-scale homogenization for a model in strain gradient plasticity.
\newblock {\em ESAIM: Control, Optimisation and Calculus of Variations},
  17(4):1035--1065, 2011.

\bibitem{Gurtin05b}
M.E. Gurtin and L.~Anand.
\newblock A theory of strain-gradient plasticity for isotropic, plastically
  irrotational materials. {P}art {I}: {S}mall deformations.
\newblock {\em J. Mech. Phys. Solids}, 53:1624--1649, 2005.

\bibitem{Gurtin05}
M.E. Gurtin and A.~Needleman.
\newblock Boundary conditions in small-deformation, single crystal plasticity
  that account for the {B}urgers vector.
\newblock {\em J. Mech. Phys. Solids}, 53:1--31, 2005.

\bibitem{Han99}
W.~Han and B.D. Reddy.
\newblock {\em Plasticity. {M}athematical {T}heory and {N}umerical {A}nalysis.}
\newblock Springer, Berlin, 1999.

\bibitem{Hanke2011}
H.~Hanke.
\newblock Homogenization in gradient plasticity.
\newblock {\em Math. Models Methods Appl. Sci.}, 21(8):1651--1684, 2011.

\bibitem{Hu97}
Sh. Hu and N.~S. Papageorgiou.
\newblock {\em Handbook of {M}ultivalued {A}nalysis. {V}olume I: {T}heory}.
\newblock Mathematics and its {A}pplications. Kluwer, Dordrecht, 1997.

\bibitem{Kozono09a}
H.~Kozono and T.~Yanagisawa.
\newblock Global {D}iv-{C}url lemma on bounded domains in {${\mathbb R}^3$}.
\newblock {\em J. Funct. Anal.}, 256:3847 -- 3859, 2009.

\bibitem{Kozono09}
H.~Kozono and T.~Yanagisawa.
\newblock ${L}^r$-variational inequality for vector fields and the
  {H}elmholtz-{W}eyl decomposition in bounded domains.
\newblock {\em Indiana Univ. Math. J.}, 58(4):1853 -- 1920, 2009.

\bibitem{Kratochvil10}
J.~Kratochvil, M.~Kruzik, and R.~Sedlacek.
\newblock Energetic approach to gradient plasticity.
\newblock {\em Z. Angew. Math. Mech.}, 90(2):122--135, 2010.

\bibitem{Lenczner_1997}
M.~Lenczner.
\newblock Homog$\acute{e}$n$\acute{e}$isation d'un circuit
  $\acute{e}$lectrique.
\newblock {\em C. R. Acad. Sci. Paris II}, 324:537--542, 1997.

\bibitem{Mielke09}
A.~Mainik and A.~Mielke.
\newblock Global existence for rate-independent gradient plasticity at finite
  strain.
\newblock {\em J. Nonlinear Science}, 19(3):221--248, 2009.

\bibitem{Steinmann00}
A.~Menzel and P.~Steinmann.
\newblock On the continuum formulation of higher gradient plasticity for single
  and polycrystals.
\newblock {\em J. Mech. Phys. Solids}, 48:1777--1796, Erratum 49, (2001),
  1179--1180, 2000.

\bibitem{Miel07}
A.~Mielke and A.M. Timofte.
\newblock Two-scale homogenization for evolutionary variational inequalities
  via the energetic formulation.
\newblock {\em SIAM J. Math. Anal.}, 39(2):642--668, 2007.

\bibitem{Neff_techmech07}
P.~Neff.
\newblock Remarks on invariant modelling in finite strain gradient plasticity.
\newblock {\em Technische Mechanik}, 28(1):13--21, 2008.

\bibitem{Neff_Iutam08}
P.~Neff.
\newblock Uniqueness of strong solutions in infinitesimal perfect gradient
  plasticity with plastic spin.
\newblock In B.D. Reddy, editor, {\em {IUTAM}-{S}ymposium on {T}heoretical,
  {M}odelling and {C}omputational {A}spects of {I}nelastic {M}edia (in {C}ape
  {T}own, 2008)}, pages 129--140. Springer, Berlin, 2008.

\bibitem{Neff_Chelminski07_disloc}
P.~Neff, K.~Che{\l}mi\'nski, and H.D. Alber.
\newblock Notes on strain gradient plasticity. {F}inite strain covariant
  modelling and global existence in the infinitesimal rate-independent case.
\newblock {\em Math. Mod. Meth. Appl. Sci. (M3AS)}, 19(2):1--40, 2009.

\bibitem{Neff_Pauly_Witsch_cracad11}
P.~Neff, D.~Pauly, and K.J. Witsch.
\newblock A canonical extension of {K}orn's first inequality to
  {$H(\mathrm{Curl})$} motivated by gradient plasticity with plastic spin.
\newblock {\em C. R. Acad. Sci. Paris, Ser. I, doi:10.1016/j.crma.2011.10.003},
  2011.

\bibitem{Neff_Pauly_Witsch_Korn_diff_forms_m2as12}
P.~Neff, D.~Pauly, and K.J. Witsch.
\newblock Maxwell meets {K}orn: A new coercive inequality for tensor fields in
  {$\R^{N\times N}$} with square inegrable exterior derivative.
\newblock {\em Math. Meth. Appl. Sci.}, 35:65--71, 2012.

\bibitem{Neff_Pauly_Witsch_Sbornik12}
P.~Neff, D.~Pauly, and K.J. Witsch.
\newblock On a canonical extension of {K}orn's first and {P}oincar\'es
  inequality to {$H(\mathrm{Curl})$}.
\newblock {\em J. Math. Science (N.Y.)}, 2012.

\bibitem{Neff_Sydow_Wieners08}
P.~Neff, A.~Sydow, and C.~Wieners.
\newblock Numerical approximation of incremental infinitesimal gradient
  plasticity.
\newblock {\em Int. J. Num. Meth. Engrg.}, 77(3):414--436, 2009.

\bibitem{Nes07}
S.~Nesenenko.
\newblock Homogenisation in viscoplasticity.
\newblock {\em SIAM J. Math. Anal.}, 39(1):236--262, 2007.

\bibitem{Nesenenko12a}
S.~Nesenenko.
\newblock Homogenization of rate-dependent inelastic models of monotone type.
\newblock {\em Asymptot. Anal.}, 81:1--29, 2013.

\bibitem{NesenenkoNeff2011}
S.~Nesenenko and P.~Neff.
\newblock Well-posedness for dislocation based gradient visco-plasticity {I}:
  subdifferential case.
\newblock {\em SIAM J. Math. Anal.}, 44(3):1694--1712, 2012.

\bibitem{NesenenkoNeff2012}
S.~Nesenenko and P.~Neff.
\newblock Well-posedness for dislocation based gradient visco-plasticity {II}:
  general non-associative monotone plastic flows.
\newblock {\em Mathematics Mechanics of Complex Systems}, 1(2):149--176, 2013.

\bibitem{Nguetseng89}
G.~Nguetseng.
\newblock A general convergence result for a functional related to the theory
  of homogenization.
\newblock {\em SIAM J. Math. Anal.}, 20(3):608 -- 623, 1989.

\bibitem{Pankov97}
A.~Pankov.
\newblock {\em $G$-convergence and homogenization of nonlinear partial
  differential operators}.
\newblock Mathematics and its Applications. Kluwer, Dordrecht, 1997.

\bibitem{Pas78}
D.~Pascali and S.~Sburlan.
\newblock {\em Nonlinear {M}appings of {M}onotone {T}ype}.
\newblock Editura Academiei, Bucharest, 1978.

\bibitem{Phel93}
R.~R. Phelps.
\newblock {\em Convex {F}unctions, {M}onotone {O}perators and
  {D}ifferentiability}, volume 1364 of {\em Lecture Notes in Mathematics}.
\newblock Springer, Berlin, 1993.

\bibitem{Reddy06}
B.D. Reddy, F.~Ebobisse, and A.T. McBride.
\newblock Well-posedness of a model of strain gradient plasticity for
  plastically irrotational materials.
\newblock {\em Int. J. Plasticity}, 24:55--73, 2008.

\bibitem{Roubi05}
T.~Roubi\v{c}ek.
\newblock {\em Nonlinear {P}artial {D}ifferential {E}quations with
  {A}pplications}, volume 153 of {\em International Series of Numerical
  Mathematics}.
\newblock Birkh\"auser, Basel, 2005.

\bibitem{Schweizer10}
B.~Schweizer.
\newblock Homogenization of the {P}rager model in one-dimensional plasticity.
\newblock {\em Contin. Mech.Thermodyn.}, 20(8):459--477, 2009.

\bibitem{Sohr01}
H.~Sohr.
\newblock {\em The Navier-{S}tokes {E}quations: {A}n {E}lementary {F}unctional
  {A}nalytic {A}pproach}.
\newblock Advanced Texts. Birkh\"auser, Basel, 2001.

\bibitem{Neff_Svendsen08}
B.~Svendsen, P.~Neff, and A.~Menzel.
\newblock On constitutive and configurational aspects of models for gradient
  continua with microstructure.
\newblock {\em Z. Angew. Math. Mech. (special issue: Material Forces)},
  89(8):687--697, 2009.

\bibitem{Vale88}
T.~Valent.
\newblock {\em Boundary {V}alue {P}roblems of {F}inite {E}lasticity}.
\newblock Springer, Berlin, 1988.

\bibitem{Vis08b}
A.~Visintin.
\newblock Homogenization of nonlinear visco-elastic composites.
\newblock {\em J. Math. Pures Appl.}, 89(5):477--504, 2008.

\bibitem{Visintin08}
A.~Visintin.
\newblock Homogenization of the nonlinear {M}axwell model of viscoelasticity
  and of the {P}randtl-{R}euss model of elastoplasticity.
\newblock {\em Proc. Roy. Soc. Edinburgh Sec. A}, 138(6):1363--1401, 2008.

\end{thebibliography}
}

\end{document}